\newtheorem{thm}{Theorem}
\newtheorem{cor}{Corollary}
\newtheorem{lem}{Lemma}[section]
\newtheorem{prop}[lem]{Proposition}
\newtheorem{rem}{Remark}
\newcommand{\C}{\mathbb{C}}
\newcommand{\R}{\mathbb{R}}
\newcommand{\beqt}{\begin{equation}}  \newcommand{\eeqt}{\end{equation}}
\newcommand{\bal}{\begin{align}}      \newcommand{\eal}{\end{align}}
\newcommand{\ba}{\begin{array}}      \newcommand{\ea}{\end{array}}
\newcommand{\bc}{\begin{center}}     \newcommand{\ec}{\end{center}}
\newcommand{\be}{\begin{enumerate}}  \newcommand{\ee}{\end{enumerate}}
\newcommand{\beq}{\begin{eqnarray}}  \newcommand{\eeq}{\end{eqnarray}}
\newcommand{\beQ}{\begin{eqnarray*}} \newcommand{\eeQ}{\end{eqnarray*}}
\newcommand{\bi}{\begin{itemize}}    \newcommand{\ei}{\end{itemize}}
\newcommand{\bt}{\begin{tabular}}    \newcommand{\et}{\end{tabular}}
\newcommand{\g}{\mathfrak{g}}
\begin{document}
	\title{Spinorial representation of surfaces in Lorentzian homogeneous spaces of dimension $3$}
	\author{Berenice Zavala Jim\'enez}
	\begin{abstract}
		We find a spinorial representation of a Riemannian or Lorentzian surface in a Lorentzian homogeneous space of dimension $3.$ We in particular obtain a representation theorem for surfaces in the $\mathbb{L}(\kappa,\tau)$ spaces. We then recover the Calabi correspondence between minimal surfaces in $\mathbb{R}^3$ and maximal surfaces in $\mathbb{R}_1^3$, and obtain a new Lawson type correspondence between CMC surfaces in $\mathbb{R}_1^3$ and in the 3-dimensional pseudo-hyperbolic space $\mathbb{H}_1^{3}.$
	\end{abstract}
	\maketitle
	\noindent
	{\it MSC 2020: 53C40, 53C27, 53C50}
	
	\noindent {\it Keywords:} Isometric immersions, spin geometry, Lorentzian homogeneous spaces.\\\\
	\noindent
	
	\date{}
	%%%%%%%%%%%%%%%%%%%%%%%%%%%%%%%%%%%%%%%%%%%%%%%%%%%%%%%%%%%%%%%%%%%%%%%%%%%%%%%%%%%
	\maketitle\pagenumbering{arabic}
	\section{Introduction}
	A 3-dimensional Lorentzian homogeneous space, simply connected and complete, is a Lie group with a left-invariant Lorentzian metric, or one of the following symmetric spaces (see \cite{GC}, \cite{CP}, \cite{CW}, \cite{WOLF},\cite{RW1} and \cite{RW2} for a detailed description of this classification): 
	\begin{itemize}
		\item the Minkowski space $\R_1^3,$ the pseudo-sphere ${\mathbb{S}_1^3}$ (the de Sitter space), the universal covering of the pseudo-hyperbolic space $\widetilde{\mathbb{H}_1^3}$ (the anti-de Sitter space);
		\item the products $\R_{-}\times \mathbb{S}^2$, $\R_{-}\times \mathbb{H}^2$, $\R\times \widetilde{\mathbb{S}_1^2}$ and $\R\times \widetilde{\mathbb{H}_1^2},$ where $\widetilde{\mathbb{S}_1^2}$  and $\widetilde{\mathbb{H}_1^2}$ are the universal coverings of the pseudo-sphere and the pseudo-hyperbolic space of dimension 2;
		\item the Cahen-Wallach spaces 
		$$M_c:=(\mathbb{R}^3,g_{_{c}}),\ {g_{_{c}}}_{(s,t,x)}:= 2ds dt + c x^2 ds^2+ dx^2,\\\ c=\pm 1.$$
	\end{itemize}
	Note that since the second fundamental group of a Lie group is necessarily trivial (see \cite[Pag. 116-118]{MILMORSE}), in this list the symmetric spaces $\mathbb{S}^3_1$ and $ \R_{-} \times\mathbb{S}^2$ do not admit a Lie group structure.

	The aim of this paper is to characterize the immersion of a surface in a given simply connected complete Lorentzian homogeneous space of dimension $3$ in terms of spinors. We will focus here on the Lorentzian metric Lie groups and on the symmetric spaces of constant curvature or which are a product, and leave the case of the Cahen-Wallach spaces to a further study. We will moreover pay a special attention to the family $\mathbb{L}(\kappa,\tau)$ of homogeneous spaces with a 4-dimensional group of isometries.
	
	The similar problem in the Riemannian setting is entirely solved: Friedrich characterized in \cite{TFI} the immersions in $\R^3$, Morel in \cite{MOR} the immersions in $\mathbb{S}^3$ and $\mathbb{H}^3$ and Roth in \cite{JRH} the immersions in Riemannian homogeneous spaces with a $4$-dimensional isometry group; we then found in \cite{BRZ} a characterization of the immersions in Riemannian metric Lie groups (of arbitrary dimension and co-dimension): since a simply connected Riemannian homogeneous space of dimension $3$ is isometric to a metric Lie group or is the product manifold $\mathbb{S}^2\times\R$ (see \cite{MP,Mi}), we could achieve the characterization in the 3-dimensional Riemmanian homogeneous spaces. 
	
	Besides the natural question of the extension to the Lorentzian setting of results obtained in Riemannian geometry, our motivation is to understand natural geometric correspondences between CMC surfaces in some Riemmanian or Lorentzian 3-dimensional homogeneous manifolds, as for instance the Lawson-type correspondences described in \cite{BD} and their Lorentzian analogues: to that end the spinorial approach seems promising since it permits to obtain a very concise formalism for the surface theories in these spaces. In that direction, we give at the end of the paper a Lawson-type correspondence between CMC surfaces in $\R^3_1$ and in $\mathbb{H}^3_1,$ and we find a simple new proof of the correspondence between minimal surfaces in $\R^{3}$ and maximal surfaces in $\R^3_1$. 
	
	Let us quote some papers concerning immersions in 3-dimensional semi-Rieman\-nian manifolds: Lawn \cite{La,LaT} studied immersions in $\mathbb{R}_1^3,$ Lawn and Roth \cite{LR} in 3-dimensional space forms and Roth \cite{JR} in Lorentzian products, especially in $M^2(k)\times \R_{-}.$ Bayard \cite{Bay}, Bayard and Patty \cite{BP} and Patty \cite{Pat} obtained afterwards similar results in 3-dimensional Lorentzian space forms as a consequence of their study of immersions in dimension 4. We will recover some of these results using explicitly the metric Lie group structures.
	
	The paper is organized as follows: we present the spinorial representation of a Riemannian or a Lorentzian surface in a Lorentzian metric Lie group of dimension 3 in Sections \ref{preliminares_lie} (via spinors of the group) and \ref{version_intrinseca} (via spinors of the surface). We then study the immersions in Lorentzian products which have a group structure in Section \ref{products}, and in $\mathbb{S}_1^3$ and $\mathbb{H}_1^3$ in Section \ref{sitter_antisitter}. We then study the product $\R_-\times\mathbb{S}^2$ in Section \ref{product_nogrupo} and the $\mathbb{L}(\kappa,\tau)$ spaces in Section \ref{inmersiones_lkt}. We finally describe in Section \ref{sec corr} a Lawson-type correspondence between CMC surfaces in $\R_1^3$ and in $\mathbb{H}_1^3,$ and give a spinorial proof of the correspondence between minimal surfaces in $\R^{3}$ and maximal surfaces in $\R_1^3$. A short appendix ends the paper, first on the identification of intrinsic and extrinsic spinor bundles, and then on the representation of skew-symmetric operators using the Clifford algebra.

	\section{Representation of a surface into a metric Lie group }\label{preliminares_lie}
	\subsection{Spinor bundle of a Lie group $G$}
	Let $G$ be a simply connected Lorentzian Lie group of dimension 3 endowed with a left-invariant metric $\langle \cdot ,\cdot \rangle$ and $\g$ its Lie algebra. The $\g$-valued 1-form
	\begin{eqnarray}\label{maurer_cartan}
	\omega_g(v)=dL_{_{g^{-1}}}(v),& v\in T_gG
	\end{eqnarray}
	is the Maurer-Cartan form and we use it to give a trivialization of the tangent bundle of $G$
	\begin{eqnarray*}\label{trivializacion_TG}
	TG&\longrightarrow& G\times \g
	\\
	X&\longmapsto& (g,\omega_g(X)).\nonumber
	\end{eqnarray*}
	
	A section $X$ of $TG$ is a map $G\longrightarrow \g,$ and we will say that it is a left-invariant section if it is a constant function. Let us denote by $\overline{\nabla}$ the Levi-Civita connection of $G$ and by $\Gamma:\g\longrightarrow \Lambda^2\g$ the application which maps a left-invariant section $X\in\g$ to the skew-symmetric linear transformation $\Gamma(X)$ of $\g$ such that
	\begin{equation}\label{gama}
	\overline{\nabla}_{X}Y=\Gamma(X)(Y)
	\end{equation}
	for all left-invariant section $Y \in \Gamma(TG).$ Using that the connection $ \overline{\nabla}$ is torsion free, we have, for all $X,Y\in {\g},$
	\begin{equation*}\label{nablaG_notorsion}
	\Gamma(X)(Y)-\Gamma(Y)(X)=[X,Y].
	\end{equation*}
	
	Since the tangent bundle of $G$ is trivial, its  orthonormal frame bundle is $Q_G=G\times SO(\g)$ and $\widetilde{Q}_G=G\times Spin(\g)$ is a spin structure over that bundle. The spin group has a natural representation in the Clifford algebra $Cl(\g)$ given by the left-multiplication
	\begin{eqnarray*}\label{rep_haz_espinoresG}
	\rho: Spin(\g)&\longrightarrow&Gl(Cl({\g}))\\
	x&\longmapsto&\begin{array}[t]{rcc}
	\rho(x):Cl({\g})&\longrightarrow &Cl({\g})\nonumber\\
	v &\longmapsto& x\cdot v.
	\end{array}
	\end{eqnarray*}
	We define the spinor bundle 
	\begin{eqnarray}\label{haz_espinoresG}
	\Sigma G:= \widetilde{Q}_G\times_\rho Cl(\g),
	\end{eqnarray}
	associated to the spin structure $\widetilde{Q}_G$ (we identify $(p,q)$ and $(p',q')$ belonging to $\widetilde{Q}_G\times Cl(\g)$ if there exists $g\in Spin(\g)$ such that $(p',q')=(p\cdot g,\rho(g^{-1})q)$). Note that this is not the usual spinor bundle since the representation $\rho$ is not the usual spin representation (it is not irreducible and not complex).

	\begin{rem}\label{G_espinores_trivial}Since $\widetilde{Q}_G=G\times Spin(\g)$ is the trivial spin structure there is a natural isomorphism between the spinor bundle defined in \eqref{haz_espinoresG} and the trivial bundle $G\times Cl(\g)$.
	\end{rem}
	Using the adjoint representation
	\begin{equation*}\label{rep_clifford_bundle}
	Spin(\g) \overset{Ad}{\longrightarrow}SO(\g)\longrightarrow Gl(Cl(\g))
	\end{equation*}
	we define as usual the Clifford bundle
	\begin{equation*}\label{clifford_bundle}
	Cl_{\Sigma G}:= \overset{\sim}{Q}\times_{Ad} Cl(\g)
	\end{equation*}
	and the Clifford action $\cdot:Cl_{\Sigma G}\times \Sigma G \longrightarrow  \Sigma G$.
	
	In a slightly different context, the covariant derivative on $\Sigma G$ is defined in \cite[Thm. 2.7]{BHMM} as follows: if $\varphi$ is a section of $\Sigma G$, $X\in TG$ and  $\partial$ is the usual derivative in the trivialization of Remark \ref{G_espinores_trivial}, then the covariant derivative is
	\begin{equation}\label{derivative_spinor}
	\nabla^G_X \varphi= \partial_X\varphi +\frac{1}{2}\sum_{1\leq k < l \leq 3}\varepsilon_k \varepsilon_l \langle  \overline{\nabla}_X e_k,e_l \rangle e_k\cdot e_l \cdot \varphi,
	\end{equation}
	where $(e_1,e_2, e_3)$ is an orthonormal frame of $G$, $\varepsilon_j=\langle e_j,e_j\rangle =\pm 1$ and $\overline{\nabla}$ is the Levi-Civita connection in $G$. 
	
	Let us call a section $\varphi$ of $\Sigma G$ a left-invariant spinor field if it is constant when regarded as a function from $G$ to $Cl(\g)$. By \eqref{derivative_spinor} the covariant derivative of a left-invariant spinor field $\varphi$ is
	\begin{equation}\label{derivada_grupo}
	\nabla^G_X\varphi=\frac{1}{2}\Gamma(X)\cdot \varphi,
	\end{equation}
	where $\Gamma(X) \in \Lambda^2\g \subset Cl(\g)$ is defined in (\ref{gama}) and $\cdot$ denotes the Clifford action. 
	
	\subsection{The twisted spinor bundle over a surface}
	
	Let $(M,g)$ be a pseudo-Riemannian surface of signature $(r_1,s_1)$, where $(r_1,s_1)=(0,2)$ if $M$ is Riemannian and $(r_1,s_1)=(1,1)$ if $M$ is Lorentzian. We suppose that $M$ is orientable in the Riemannian case, and orientable in space and in time in the Lorentzian case. Let us consider the trivial vector bundle $E=M\times \R$ endowed with a metric of signature $(r_2,s_2)$ in each fiber, with
	$$(r_1,s_1)+(r_2,s_2)=(1,2).$$ 
	So the metric on $E$ is $- d\nu^2$ if $M$ is Riemannian and $+d\nu^2$ if $M$ is Lorentzian. Let $\widetilde{Q}_M$ and $\widetilde{Q}_E$ be spin structures in $M$ and $E$. Since the orthonormal frame bundle of $E$ is $Q_E=M\times \{1\}$ we have that $\widetilde{Q}_E\longrightarrow M$ is the trivial double covering with fiber $\{1,-1\}$. Let us now define the spinor bundles $\Sigma M$ and $\Sigma E.$ For $k=1,2$ consider the following representations of the spin group
	\begin{eqnarray*}\label{rep_haz_espinoresM_riem}
	\rho_k: Spin(r_k,s_k)&\longrightarrow&Gl(Cl_{r_k,s_k})\\
	x&\longmapsto&\begin{array}[t]{rcc}
	\rho_k(x):Cl_{r_k,s_k}&\longrightarrow &Cl_{r_k,s_k}\nonumber\\
	v &\longmapsto& x\cdot v.
	\end{array} 
	\end{eqnarray*}
	and the spinor bundles over $M$ and $E$ 
	\begin{eqnarray*}\label{haz_espinoresME_riem}
	\Sigma M:= \overset{\sim}{Q}_M\times_{\rho_1 }Cl_{r_1,s_1}& \text{and}&\Sigma E:= \overset{\sim}{Q}_E\times_{\rho_2} Cl_{r_2,s_2}.
	\end{eqnarray*}
	Since the algebras $Cl_{r_1,s_1}\widehat{\otimes} Cl_{r_2,s_2}$ and $Cl_{r_1+r_2,s_1+s_2} $ are isomorphic, the vector bundle  $\Sigma M\otimes \Sigma E$ is isomorphic to
	\begin{eqnarray}\label{haz_prod_tensorial_riem}
	\Sigma: = \left(\overset{\sim}{Q}_M\times_M \overset{\sim}{Q}_E\right) \times_{\rho} Cl_{1,2},
	\end{eqnarray}
	where
	\begin{eqnarray*}\label{rho}
	\rho:Spin(r_1,s_1)\times Spin(r_2,s_2)&\longrightarrow& Gl(Cl_{1,2})\\
	x=(g_1,g_2)&\longmapsto& \begin{array}[t]{rcc}
	\rho(x):Cl_{1,2}&\longrightarrow &Cl_{1,2}\nonumber\\
	v &\longmapsto& g_1 g_2\cdot v
	\end{array}
	\end{eqnarray*}
	(the representation $\rho$ is equivalent to $\rho_1\otimes\rho_2$). We interpret the sub-bundle
	\begin{eqnarray*}\label{haz_prod_tensorial_riem_unit}
		U\Sigma : =\left( \overset{\sim}{Q}_M\times_M \overset{\sim}{Q}_E\right) \times_{\rho} Spin({1,2})&\subset& \Sigma
	\end{eqnarray*}
	as the bundle of unit spinors. From the adjoint representation
	\begin{equation*}\label{rep_clifford_bundle}
	Spin(2)\times Spin(1)\longrightarrow Spin(1,2) \overset{Ad}{\longrightarrow}SO(1,2)\longrightarrow Gl(Cl_{1,2})
	\end{equation*}
	we have as usual the Clifford bundle
	\begin{equation*}\label{clifford_bundle}
	Cl_{\Sigma}:= \overset{\sim}{Q}\times_{Ad} Cl_{1,2}
	\end{equation*}
	and the Clifford action $\cdot:Cl_{\Sigma}\times \Sigma \longrightarrow  \Sigma$.
	
	If $(M,g)$ is an orientable Riemannian (Lorentzian) surface isometrically immersed in $G$ and $E\rightarrow M$ is the normal bundle of $M$ in $G$ then $\Sigma G_{|M}=\Sigma=\Sigma M\otimes \Sigma E$, and setting $\nabla:=\nabla^M\otimes \nabla ^E$ where $\nabla^M$ and $\nabla^E$ are respectively the Levi-Civita and the normal connections we have the Gauss formula
	\begin{equation}\label{gauss_eq_grupo_noinvariante}
	\nabla_X \varphi=-\frac{1}{2}\sum_{j=1}^2\varepsilon_j e_j\cdot B(X,e_j)\cdot \varphi + \nabla_X^{G}\varphi,\ \ \ \varepsilon_j=g(e_j,e_j)=\pm 1
	\end{equation}
	for all $X\in TM,$ where $B$ is the second fundamental form of the immersion and $\varphi$ is a section of $\Sigma G\vert _{M}.$ If $\varphi$ is moreover left-invariant, it follows from \eqref{derivada_grupo} that \eqref{gauss_eq_grupo_noinvariante} is equivalent to 
	\begin{equation}\label{gauss_eq_grupo}
	\nabla_X \varphi=-\frac{1}{2}\sum_{j=1}^2\varepsilon_j e_j\cdot B(X,e_j)\cdot \varphi + \frac{1}{2}\Gamma(X)\cdot \varphi,\ \ \varepsilon_j=g(e_j,e_j),
	\end{equation}
	for all $X\in TM$.
	
	\subsection{Definition and properties of $\langle \langle\cdot ,\cdot \rangle \rangle$} In this section we define an application $\langle \langle\cdot ,\cdot \rangle \rangle:\Sigma\times \Sigma \longrightarrow Cl(\g)$ which will permit us to give an explicit representation formula for a surface immersed in a 3-dimensional Lorentzian Lie group.
	
	Let us consider the involutive anti-automorphism $\tau:Cl(\g)\longrightarrow Cl(\g)$ defined as the linear extension of the transposition of vectors $x_1\cdots x_k \longmapsto x_k\cdots x_1$ and the map
	\begin{eqnarray*}
		\langle \langle \cdot ,\cdot \rangle \rangle: \hspace{.5cm}Cl(\g)\times Cl(\g) &\longrightarrow& Cl(\g)\\
		(\sigma_1,\sigma_2) &\longmapsto& \tau(\sigma_2)\sigma_1.
	\end{eqnarray*}
	If $g\in Spin(\g)$ and $v,w\in Cl(\g)$ we have
	$$\langle \langle g\cdot v ,g\cdot w \rangle \rangle=\tau(g\cdot w)\cdot g\cdot v=\tau(w)\cdot \tau(g)\cdot g\cdot v=\tau(w)\cdot v=\langle \langle v , w \rangle \rangle,$$
	which shows that $\langle \langle \cdot ,\cdot \rangle \rangle$ is $Spin(\g)$-equivariant and induces a map
	\begin{eqnarray}\label{extension_transpuesta}
	\langle \langle \cdot ,\cdot \rangle \rangle:\hspace{.5cm} \Sigma \times \Sigma &\longrightarrow& Cl(\g)\\
	(\varphi,\psi) &\longmapsto& \tau([\psi])[\varphi],\nonumber
	\end{eqnarray}
	where the brackets $[\varphi],[\psi]\in Cl(\g)$ are the coordinates of $\varphi,\psi$ in a spinorial frame. The proofs of following two lemmas can be found in \cite{BRZ}.
	
	\begin{lem}\label{propiedades_transpuesta}
		For all $\varphi, \psi\in \Gamma(\Sigma)$ and $X\in \Gamma(TM\oplus E)$ 
		\begin{eqnarray}\label{tau_1}
		\langle \langle \varphi,\psi \rangle \rangle=\tau \langle \langle \psi, \varphi \rangle \rangle
		\end{eqnarray}
		and 
		\begin{eqnarray}\label{tau_2}
		\langle \langle X\cdot \varphi,\psi \rangle \rangle=\langle \langle \varphi,X\cdot \psi \rangle \rangle.
		\end{eqnarray}
	\end{lem}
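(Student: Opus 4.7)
The plan is to reduce both identities to elementary algebraic manipulations in $Cl(\g)$, using only that $\tau$ is an involutive anti-automorphism which fixes every vector of $\g$. Once a spinorial frame is fixed, each section of $\Sigma$ is represented by its coordinate in $Cl_{1,2}\simeq Cl(\g)$; the $Spin(\g)$-equivariance computation carried out just before the statement ensures that the identities established at the level of coordinates descend to $\Sigma\times\Sigma$ regardless of the frame chosen.

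For the first identity, I would just apply $\tau$ to the defining formula and use that $\tau$ is anti-multiplicative with $\tau^2=\iid$:
\begin{equation*}
\tau\bigl(\langle\langle\psi,\varphi\rangle\rangle\bigr)=\tau\bigl(\tau([\varphi])\,[\psi]\bigr)=\tau([\psi])\,\tau^{2}([\varphi])=\tau([\psi])\,[\varphi]=\langle\langle\varphi,\psi\rangle\rangle.
\end{equation*}

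For the second identity, I would first recall that a vector $X\in TM\oplus E$ acts on $\Sigma$ by left Clifford multiplication on the coordinate, $[X\cdot\varphi]=X\cdot[\varphi]$; this follows from the way the twisted product $\Sigma=(\widetilde{Q}_M\times_M\widetilde{Q}_E)\times_\rho Cl_{1,2}$ is set up together with the isomorphism $Cl_{r_1,s_1}\widehat{\otimes}Cl_{r_2,s_2}\simeq Cl_{1,2}\simeq Cl(\g)$. Since $\tau(X)=X$ for every element $X$ of degree one, we get
\begin{equation*}
\langle\langle\varphi,X\cdot\psi\rangle\rangle=\tau\bigl(X\cdot[\psi]\bigr)\,[\varphi]=\tau([\psi])\,\tau(X)\,[\varphi]=\tau([\psi])\,X\cdot[\varphi]=\langle\langle X\cdot\varphi,\psi\rangle\rangle.
\end{equation*}

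The main (and only) point demanding some care is the identification of the Clifford action with left multiplication in the coordinate model of $\Sigma$; once that bookkeeping is done, both identities are one-line consequences of the two defining properties of $\tau$. Since the paper refers to \cite{BRZ} for a detailed argument, I would content myself with the short verification above.
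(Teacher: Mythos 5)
Your proof is correct and is essentially the only natural argument here; the paper simply defers to \cite{BRZ}, and the computation there proceeds exactly along the lines you describe: pass to coordinates in a spinorial frame (legitimate because the $Spin(\g)$-equivariance of $\tau(\sigma_2)\sigma_1$ was already checked when the pairing was introduced), then use that $\tau$ is an involutive anti-automorphism fixing degree-one elements. The only small imprecision is notational: in the second identity you write $[X\cdot\varphi]=X\cdot[\varphi]$ and then $\tau(X)=X$, where you should more carefully write $[X\cdot\varphi]=[X]\cdot[\varphi]$ with $[X]\in\g\subset Cl(\g)$ the coordinate of $X$ in the chosen frame, and then $\tau([X])=[X]$ because $[X]$ has degree one. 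Since the Clifford bundle is built via the adjoint representation while $\Sigma$ uses left multiplication, one checks that the map $(a,v)\mapsto a\cdot v$ on coordinates is equivariant and hence does define the Clifford action; you point to this, and indeed it is the one piece of bookkeeping the argument rests on.
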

	
	\begin{lem}\label{compatibilidad_transpuesta}
		The connection $\nabla$ is compatible with the product $\langle \langle \cdot ,\cdot \rangle \rangle$, that is,
		\begin{eqnarray*}
			\partial_{X}\langle \langle \varphi,\psi \rangle \rangle=\langle \langle \nabla_X\varphi,\psi \rangle \rangle+\langle \langle \varphi,\nabla_X \psi \rangle \rangle
		\end{eqnarray*}
		for all $\varphi,\psi \in \Gamma(\Sigma)$ and $X\in TM$.
	\end{lem}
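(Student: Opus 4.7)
The plan is to reduce the identity to a purely algebraic statement in $Cl(\g)$ by working in a local spinorial frame. First I would choose an open set $U\subset M$ over which both spin structures $\widetilde{Q}_M$ and $\widetilde{Q}_E$ admit sections, giving a local section $\widetilde{s}$ of $\widetilde{Q}_M\times_M\widetilde{Q}_E$. In this frame any section of $\Sigma$ is represented by a map $[\varphi]\colon U\to Cl_{1,2}\simeq Cl(\g)$, the pairing \eqref{extension_transpuesta} becomes literally $\langle\langle\varphi,\psi\rangle\rangle=\tau([\psi])[\varphi]$, and pointwise differentiation at once gives
\begin{equation*}
\partial_X\langle\langle\varphi,\psi\rangle\rangle=\tau(\partial_X[\psi])[\varphi]+\tau([\psi])\partial_X[\varphi].
\end{equation*}

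Next I would write the connection in the same trivialization. Through the isomorphism $Cl_{r_1,s_1}\grad Cl_{r_2,s_2}\simeq Cl_{1,2}$ used to build \eqref{haz_prod_tensorial_riem}, the tensor product connection $\nabla=\nabla^M\otimes\nabla^E$ corresponds to the sum of the spin connections on $\Sigma M$ and $\Sigma E$, so that
\begin{equation*}
\nabla_X\varphi=\partial_X[\varphi]+\omega(X)\cdot[\varphi],
\end{equation*}
where the $1$-form $\omega$ takes values in $\Lambda^2 TM\oplus\Lambda^2 E\subset\Lambda^2\g\subset Cl(\g)$. Verifying that $\omega(X)$ really is a $2$-form inside $Cl(\g)$ (rather than an arbitrary element) is the main technical point; once this identification with the spin Lie algebra is in hand, the rest is a short algebraic manipulation.

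Finally, using the antiautomorphism property $\tau(ab)=\tau(b)\tau(a)$, a direct expansion yields
\begin{align*}
\langle\langle\nabla_X\varphi,\psi\rangle\rangle+\langle\langle\varphi,\nabla_X\psi\rangle\rangle
&=\tau([\psi])\bigl(\partial_X[\varphi]+\omega(X)[\varphi]\bigr)+\tau\bigl(\partial_X[\psi]+\omega(X)[\psi]\bigr)[\varphi]\\
&=\partial_X\langle\langle\varphi,\psi\rangle\rangle+\tau([\psi])\bigl(\omega(X)+\tau(\omega(X))\bigr)[\varphi].
\end{align*}
Since $\tau(e_ie_j)=e_je_i=-e_ie_j$ for $i\neq j$, the antiautomorphism $\tau$ acts as $-\iid$ on $\Lambda^2\g$, so $\omega(X)+\tau(\omega(X))=0$ and the correction term vanishes, proving the claim.
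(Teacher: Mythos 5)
Your argument is correct, and it is the natural proof: the paper itself does not spell out the computation (it cites \cite{BRZ}), but the reduction to a local spinorial frame, the identification of the connection form $\omega(X)$ with a bivector acting by left Clifford multiplication, and the observation that $\tau$ restricts to $-\iid$ on $\Lambda^2\g$ are exactly the ingredients needed, and you have assembled them without error. Two small remarks for polish: since $E$ is a line bundle, $\Lambda^2 E=0$, so $\omega$ in fact takes values only in $\Lambda^2 TM\subset\Lambda^2\g$ (which only strengthens your claim); and it is worth stating explicitly that the bivector nature of $\omega(X)$ is read off from the formula \eqref{derivative_spinor}, $\omega(X)=\frac{1}{2}\sum_{k<l}\varepsilon_k\varepsilon_l\langle\nabla_Xe_k,e_l\rangle\,e_k\cdot e_l$, which you flag as the key technical point but do not justify — that one sentence would close the gap.
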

	
	\subsection{Representation of a surface in a $3$-dimensional Lorentzian Lie group}\label{theorem_grupos}
	\label{inmersion_en_grupos}
	
	Let $G$ be a $3$-dimensional Lorentzian Lie group endowed with a left-invariant metric $\langle\cdot , \cdot \rangle$ and $(M,g)$ an oriented pseudo-Riemmanian surface of signature $(r_1,s_1)$. We consider the trivial bundle $E=M\times \R$ with metric $-d\nu^2$ if $M$ is Riemannian or $+d\nu^2$ if $M$ is Lorentzian, and suppose that a symmetric bilinear form $B:TM\times TM\longrightarrow E$ is given. The following \emph{compatibility conditions} on $M$, $G$ and $B$ will appear to be necessary to state our theorem.
	\begin{enumerate}
		\item \label{compa1} There exists a bundle isomorphism
		\begin{equation*}\label{iso_haces}
		f:TM\oplus E\longrightarrow M\times \mathfrak{g}
		\end{equation*}
		which preserves the metrics; from this isomorphism we define  
		\begin{equation*}\label{gama_TME}
		\Gamma:TM\oplus E\longrightarrow \Lambda^2(TM\oplus E) \subset Cl_{\Sigma}
		\end{equation*}
		such that, for all $X, Y\in \Gamma(TM\oplus E)$,
		\begin{equation*}\label{gama_f}
		f(\Gamma(X)(Y))=\Gamma(f(X))(f(Y)),
		\end{equation*}
		where
		in the right hand side of the equation, $\Gamma$ is the application defined in \eqref{gama}. Furthermore, we say that a section $Z$ of $TM\oplus E$ is left-invariant if $f(Z):M\longrightarrow \g$ is a constant map.
		\item \label{compa2} If $\nabla:=\nabla^M\oplus\nabla^E$ is the sum of the Levi-Civita connection on $M$ and the trivial connection on $E$, we assume that the covariant derivative of a left-invariant section $Z\in \Gamma(TM\oplus E)$ is
		\begin{equation}\label{derivada_TME}
		\nabla_{X}Z=\Gamma(X)(Z)-B(X,Z^T)+B^*(X,Z^N),
		\end{equation} 
		for all $X\in TM$, where $Z=Z^T+Z^N\in TM\oplus E$ and $B^*:TM\times E \longrightarrow TM$ is the bilinear operator that satisfies
		\begin{equation*}\label{segunda_forma_fund}
		\langle B(X,Y),N\rangle=\langle Y,B^*(X,N)\rangle,
		\end{equation*}
		for all $X,Y\in \Gamma(TM)$ and $N\in \Gamma(E)$.
	\end{enumerate}
	Now, we enunciate the main theorem of that section whose proof in a slightly different context can be found in \cite{BRZ}.
	\begin{thm}\label{thm main result}
		We suppose that $M$ is simply connected. The following statements are equivalent:
		\begin{enumerate}
			\item There exists a section $\varphi\in\Gamma(U\Sigma)$ such that
			\begin{eqnarray}\label{gauss_grupos}
			\nabla_X\varphi=-\frac{1}{2}\sum_{j=1}^2\varepsilon_j e_j\cdot B(X,e_j)\cdot\varphi+\frac{1}{2}\Gamma(X)\cdot\varphi, & \varepsilon_j=g(e_j,e_j)  
			\end{eqnarray}
			for all $X\in TM$.
			\item There exists an isometric immersion $F:\ M\rightarrow G$ with normal bundle $E$ and second fundamental form $B.$
		\end{enumerate}
		Precisely, if $\varphi$ is a solution of equation $\eqref{gauss_grupos}$, replacing $\varphi$ by $\varphi\cdot a$ for some $a\in Spin(\g)$ if necessary, we consider the $\g$-valued 1-form given by
		\begin{equation}\label{rep_exp_grupos}
		\xi(X)=\langle \langle X\cdot \varphi, \varphi \rangle \rangle ,
		\end{equation}
		for all $X\in TM$ and the formula $F=\int \xi$ defines an isometric immersion $F:M\longrightarrow G$ with normal bundle $E$ and second fundamental form $B$. Here $\int$ denotes the Darboux integral (see \cite[Pag. 165]{PMal}), i.e., $F=\int \xi: M\longrightarrow G$ is such that $F^*\omega=\xi$ for $\omega\in \Omega^1(G,\g)$ the Maurer-Cartan form defined in \eqref{maurer_cartan}. Reciprocally, an isometric immersion $M\longrightarrow G$ with normal bundle $E$ and second fundamental form $B$ can be written in that way.
	\end{thm}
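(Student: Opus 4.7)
\smallskip

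\noindent\textbf{Proof plan.} The implication $(2)\Rightarrow(1)$ is essentially already contained in formula \eqref{gauss_eq_grupo}: given an isometric immersion $F:M\to G$ with normal bundle $E$ and second fundamental form $B$, take any left-invariant unit spinor $\varphi$ on $G$ (i.e.\ a constant element of $Cl(\g)$ lying in $\mathrm{Spin}(\g)$ under the trivialization of Remark \ref{G_espinores_trivial}), restrict it to $M$, and apply the Gauss formula \eqref{gauss_eq_grupo_noinvariante} together with \eqref{derivada_grupo}; the result is precisely \eqref{gauss_grupos}. Note also that the required bundle isomorphism $f:TM\oplus E\to M\times\g$ for the compatibility conditions is provided by the trivialization \eqref{trivializacion_TG} restricted to $M.$

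For the converse $(1)\Rightarrow(2),$ the main idea is to verify that the $Cl(\g)$-valued 1-form $\xi(X)=\langle\langle X\cdot\varphi,\varphi\rangle\rangle$ is actually $\g$-valued and satisfies the Maurer-Cartan structure equation $d\xi+\tfrac12[\xi,\xi]=0;$ by the Darboux-type integration theorem for $\g$-valued 1-forms on a simply connected manifold, this yields a smooth map $F:M\to G$ with $F^{*}\omega=\xi,$ unique up to left translation in $G,$ and one then checks that $F$ is the sought immersion.

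There are three substantive things to verify. First, that $\xi(X)\in\g\subset Cl(\g);$ using \eqref{tau_1} we have $\tau\langle\langle X\cdot\varphi,\varphi\rangle\rangle=\langle\langle\varphi,X\cdot\varphi\rangle\rangle=\langle\langle X\cdot\varphi,\varphi\rangle\rangle$ by \eqref{tau_2}, so $\xi(X)$ is fixed by $\tau,$ which only allows components in degree $0$ and $1$ of the Clifford grading; the degree-$0$ (scalar) component is shown to vanish by a pointwise algebraic argument, possibly after replacing $\varphi$ by $\varphi\cdot a$ for a well-chosen $a\in\mathrm{Spin}(\g)$ exactly as stated in the theorem. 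Second, that $X\mapsto\xi(X)$ is a linear isometry from $T_pM$ into $\g$ at each point, and that its image is tangent to the corresponding sub-vector space $f(T_pM)\subset\g;$ this follows from Lemma \ref{propiedades_transpuesta} applied to the unit spinor $\varphi\in U\Sigma$ and from compatibility condition \eqref{compa1} together with the isomorphism between $\Sigma M\otimes\Sigma E$ and $\Sigma.$ Third, and this is the main obstacle, that $\xi$ satisfies the Maurer-Cartan integrability condition. Using Lemma \ref{compatibilidad_transpuesta}, one computes
\begin{equation*}
\partial_{X}\xi(Y)-\partial_{Y}\xi(X)-\xi([X,Y])=\langle\langle\nabla_{X}Y\cdot\varphi-\nabla_{Y}X\cdot\varphi-[X,Y]\cdot\varphi,\varphi\rangle\rangle+(\cdots),
\end{equation*}
where $(\cdots)$ collects the contributions of $\nabla\varphi$ obtained by substituting \eqref{gauss_grupos}. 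The first parenthesis vanishes because the connection on $TM\oplus E$ in compatibility condition \eqref{compa2} is torsion-free (as follows from the torsion-freeness of $\overline{\nabla}$), while the remaining terms reorganize, via the Clifford identity $e_{j}\cdot e_{k}+e_{k}\cdot e_{j}=-2\varepsilon_{j}\delta_{jk}$ and the symmetry of $B,$ into exactly $-[\xi(X),\xi(Y)].$

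Once the Maurer-Cartan equation is established and $F$ produced by Darboux, the fact that $\xi$ is an isometry already guarantees that $F$ is an isometric immersion, the identification $dF(TM)\oplus dF(TM)^{\perp}\simeq f(TM)\oplus f(E)$ transports $E$ to the normal bundle of $F,$ and differentiating $\xi$ once more shows that the second fundamental form of $F$ equals $B.$ For the reciprocal statement about the form of any immersion, one observes that starting from a given isometric immersion and the associated restriction of a left-invariant spinor, the above construction returns the immersion itself, up to the action of $\mathrm{Spin}(\g)$ on $\varphi$ and a left translation in $G.$
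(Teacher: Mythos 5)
The plan is structurally correct and mirrors the approach the paper outsources to \cite{BRZ}: show $\xi$ is $\g$-valued, show it is a pointwise isometry, verify the Maurer--Cartan equation, and integrate. However, the crucial step is only asserted, and the asserted mechanism is wrong in a way that would block the proof. You claim the remaining terms in $d\xi(X,Y)$ ``reorganize, via the Clifford identity and the symmetry of $B$, into exactly $-[\xi(X),\xi(Y)]$.'' If one actually carries out the computation (writing $\nabla_X\varphi=\sigma(X)\cdot\varphi$ with $\sigma(X)=-\tfrac12\sum_j\varepsilon_j e_j\cdot B(X,e_j)+\tfrac12\Gamma(X)$, a bivector, so $\tau(\sigma)=-\sigma$, and then using Lemma \ref{lemaB1}), the symmetry of $B$ kills the second-fundamental-form contribution and one lands on
\begin{equation*}
d\xi(X,Y)=-\,\langle\langle(\Gamma(X)(Y)-\Gamma(Y)(X))\cdot\varphi,\varphi\rangle\rangle=-\,\xi_{ext}\bigl(\Gamma(X)(Y)-\Gamma(Y)(X)\bigr),
\end{equation*}
where $\xi_{ext}$ is the natural extension of $\xi$ to $TM\oplus E$. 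This is \emph{not} yet $-[\xi(X),\xi(Y)]_{\g}$. The bundle isomorphism $f$ of condition \eqref{compa1} intertwines the torsion-free relation with the Lie bracket ($f(\Gamma(X)(Y)-\Gamma(Y)(X))=[f(X),f(Y)]_{\g}$), but $\xi_{ext}$ is a priori only \emph{some} metric isomorphism $TM\oplus E\to\g$, and a generic metric isomorphism of $\g$ is not a Lie algebra automorphism. So you must prove that $\xi_{ext}=f$ (after the normalization $\varphi\mapsto\varphi\cdot a$), and that is exactly where compatibility condition \eqref{derivada_TME} enters: one shows, using Lemma \ref{compatibilidad_transpuesta} and \eqref{derivada_TME}, that $\partial_X\langle\langle Z\cdot\varphi,\varphi\rangle\rangle=0$ for every left-invariant section $Z$, so $\xi_{ext}\circ f^{-1}$ is a constant element of $SO(\g)$, and the freedom $\varphi\mapsto\varphi\cdot a$ replaces $\xi_{ext}$ by $\mathrm{Ad}([a]^{-1})\circ\xi_{ext}$ and can therefore be used to make $\xi_{ext}=f$. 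You also misattribute the role of the normalization: for any $\varphi\in\Gamma(U\Sigma)$ one has $[\varphi]\in Spin(\g)$, so $\xi(X)=\mathrm{Ad}([\varphi]^{-1})([X])$ is automatically a vector and the scalar part vanishes without any replacement; the replacement is needed for the alignment with $f$ above, not for the degree--$0$ vanishing. Without these two points, the derivation of the Maurer--Cartan equation does not go through.

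Finally, in the $(2)\Rightarrow(1)$ direction, be careful about the uniqueness claim in the reciprocal statement: you state the construction returns the immersion ``up to the action of $\mathrm{Spin}(\g)$ on $\varphi$ and a left translation in $G$''; this is the right phenomenon, but it is worth noting that the normalization $\varphi\mapsto\varphi\cdot a$ is the same degree of freedom, so it should be treated consistently in both directions.
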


	The explicit representation formula $F=\int \xi$ is a generalized Weierstrass formula for Lie groups.
	
	\begin{rem}\label{compatibilidad_coordenadas}
		Let us write the compatibility conditions using a frame $(e_1^o,e_2^o,e_3^o)$ of $\g$ such that $\langle e_i^o,e_j^o\rangle=0$ if $i\neq j$ and $\langle  e_1^o,e_1^o\rangle=\langle e_2^o,e_2^o\rangle=1=-\langle e_3^o,e_3^o\rangle$. We denote by $\Gamma_{ij}^k\in \R$, $1\leq i,j,k\leq 3$ the constants that satisfy
		\begin{eqnarray*}
			\Gamma(e_i^o)(e_j^o)=\sum_{k=1}^{3}\varepsilon_k\Gamma_{ij}^ke_k^o,& \varepsilon_k=\langle e_k^o,e_k^o \rangle=\pm 1,
		\end{eqnarray*}
		and let $N$ be a unit section of $E$: $\langle N,N\rangle=\varepsilon$, where $\varepsilon=-1$ if $M$ is a Riemannian surface and $\varepsilon=+1$ if $M$ is a Lorentzian surface. For $i\in \{1,2,3\}$ we choose $\underline{e}_i\in \Gamma(TM\oplus E)$ such that $f(\underline{e}_i)=e_i^o,$ and consider $\nu_i \in C^{\infty}(M)$ and $T_i \in \Gamma(TM)$ so that $\underline{e}_i=T_i+\nu_i N$; since $f$ is an isometry $(\underline{e}_1,\underline{e}_2,\underline{e}_3)$ is an orthonormal frame of $TM\oplus E$ and we have
		\begin{equation}\label{ortonormalidad}
		\langle T_i,T_j\rangle +\varepsilon \nu_i \nu_j=\varepsilon_i\delta_{ij},
		\end{equation}
		for all $i,j\in \{1,2,3\}$. With this notation, the equation
		\eqref{derivada_TME} is equivalent to
		\begin{eqnarray}
		\nabla_X T_j&=& \sum_{1\leq i,k\leq 3}\varepsilon_i \varepsilon_k \Gamma_{ij}^k \langle X, T_i \rangle T_k +\nu_j S(X)\label{derivada_tj}\\
		d\nu_j(X)&=& \sum_{1\leq i,k\leq 3}\varepsilon_i\varepsilon_k \Gamma_{ij}^k \langle X, T_i \rangle\nu_k -h(X,T_j)\label{derivada_nuj},\ \  1\leq j\leq 3,
		\end{eqnarray}
		where $S(X):=B^*(X, N)$ and $h(X,T_j):=\langle B(X,T_j), N\rangle$. Reciprocally, if there exist vector fields $T_i\in \Gamma(TM)$ and functions $\nu_i\in C^{\infty}(M)$  for $i\in \{1,2,3\}$ such that \eqref{ortonormalidad}-\eqref{derivada_nuj} are satisfied, then $f:TM\oplus E \longrightarrow M\times \g$ given by $f(\underline{e}_i)=e_i^o$ for $\underline{e}_i=T_i+\nu_i N$ is a bundle isomorphism that preserves the metrics and such that \eqref{derivada_TME} is satisfied.
	\end{rem}
	
	\section{Intrinsic version of the immersion theorem in Lie groups}\label{version_intrinseca}
	In this section we rewrite Theorem \ref{thm main result} using only the usual spinor bundle $\Sigma M.$
	\subsection{Immersion of a Riemannian surface}
	Let $M$ be a simply connected Riemannian surface and let $E=M\times \R$ be endowed with the metric $-d\nu^2$ in each fiber. For a symmetric bilinear form $B:TM\times TM \longrightarrow E$ and a section $N$ of $E$ such that $\langle N ,N \rangle=-1,$ we define the symmetric operator $S:TM\longrightarrow TM$ so that $\langle B(X,Y),N \rangle=\langle S(X),Y\rangle$ for all $X,Y\in TM$; explicitly, for all $X\in TM$ 
	$$S(X)=\sum_{j}e_j\langle B(X,e_j),N \rangle.$$
	We assume that there exist vector fields $T_i\in \Gamma(TM)$ and functions $\nu_i \in C^{\infty}(M)$, $i = 1, 2, 3,$ satisfying the compatibility conditions \eqref{ortonormalidad}-\eqref{derivada_nuj}. Let us set for all $X\in TM$
	\begin{equation*}\label{gamma1}
	\Gamma_1(X)=\sum_{i=1}^3 \varepsilon_i \langle X,T_i\rangle \sum_{j<k}\varepsilon_j \varepsilon_k \Gamma_{ij}^k\frac{1}{2}(T_j\cdot T_k-T_k\cdot T_j)  
	\end{equation*}
	and
	\begin{equation*}\label{gamma2}
	\Gamma_2(X)= \sum_{i=1}^3 \varepsilon_i \langle X,T_i\rangle \sum_{j<k}\varepsilon_j \varepsilon_k\Gamma_{ij}^k  (\nu_j T_k-\nu_k T_j).\end{equation*}
	
	The decomposition $Cl_{1,2}=Cl^0_{1,2}\oplus Cl^1_{1,2}$ induces a splitting $\Sigma=\Sigma_0\oplus\Sigma_1$ of the spinor bundle, and there is a natural identification
	\begin{eqnarray}\label{id_riemanniana_rev*}	
	\Sigma 
	M&\rightarrow&\Sigma_0\\
	\psi&\mapsto&\psi^*\nonumber
	\end{eqnarray}
	such that
	\begin{equation} \label{id_estrella_1_riem}
	(\nabla_X\psi)^*=\nabla_X\psi^*,\hspace{.3cm}(X\cdot\psi)^*=iN\cdot X\cdot\psi^*,\hspace{.3cm}|\psi^+|^2-|\psi^-|^2=\langle\langle\psi^*,\psi^*\rangle\rangle
	\end{equation}
	for all $X\in TM$ (details are given in Appendix \ref{spinor_bundle_identification}).

	\begin{thm} \label{riemanniana}The following statements are equivalent:
		\begin{enumerate}
			\item There exists an isometric immersion of $M$ into $G$ with shape operator $S$.
			\item There exists $\psi\in \Gamma(\Sigma M)$ solution of
			\begin{equation}\label{inmersionRG} \nabla_{_X}\psi=\frac{i}{2}S(X)\cdot \psi+\frac{1}{2}\Gamma_1(X)\cdot {\psi} +\frac{i}{2}\Gamma_2(X)\cdot \psi \end{equation}
			for all $X\in TM$ and such that $\vert \psi^{+}\vert ^2-\vert \psi^{-}\vert ^2=1$.
		\end{enumerate}
	\end{thm}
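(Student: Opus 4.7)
The plan is to reduce Theorem~\ref{riemanniana} to Theorem~\ref{thm main result} via the identification \eqref{id_riemanniana_rev*} between $\Sigma M$ and the even part $\Sigma_0$ of the extrinsic bundle $\Sigma$. Since both $e_j\cdot B(X,e_j)$ and $\Gamma(X)$ are bivectors in $Cl_{\Sigma}$, the right-hand side of \eqref{gauss_grupos} preserves the parity splitting $\Sigma=\Sigma_0\oplus\Sigma_1$, so I may restrict attention to even spinors. Moreover, by the last identity of \eqref{id_estrella_1_riem}, the unit condition $\langle\langle\varphi,\varphi\rangle\rangle=1$ characterising $\Gamma(U\Sigma)$ is, for $\varphi=\psi^{*}\in\Sigma_0$, equivalent to $|\psi^{+}|^2-|\psi^{-}|^2=1$. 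It therefore suffices to show that, under the identification $\psi\mapsto\psi^{*}$, equation \eqref{gauss_grupos} for $\psi^{*}$ is equivalent to \eqref{inmersionRG} for $\psi$, and then invoke Theorem~\ref{thm main result}.

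For the shape-operator term, fix a unit normal $N$ with $\langle N,N\rangle=-1$; then $B(X,e_j)=-\langle S(X),e_j\rangle N$ and the Clifford identity $e_j\cdot N=-N\cdot e_j$ yields
$$-\tfrac{1}{2}\sum_{j=1}^{2}\varepsilon_j\, e_j\cdot B(X,e_j)\cdot\psi^{*}=-\tfrac{1}{2}\,N\cdot S(X)\cdot\psi^{*}.$$
Applying $(Y\cdot\psi)^{*}=iN\cdot Y\cdot\psi^{*}$ in the form $N\cdot Y\cdot\psi^{*}=-i(Y\cdot\psi)^{*}$ transforms this expression into $\tfrac{i}{2}(S(X)\cdot\psi)^{*}$, recovering the first term of \eqref{inmersionRG}.

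For the $\Gamma$-term, I use Remark~\ref{compatibilidad_coordenadas}: since $X\in TM$ satisfies $\langle X,N\rangle=0$, one has $X=\sum_i\varepsilon_i\langle X,T_i\rangle\underline{e}_i$ with $\underline{e}_i=T_i+\nu_iN$, and $\Gamma(\underline{e}_i)(\underline{e}_j)=\sum_k\varepsilon_k\Gamma_{ij}^k\underline{e}_k$. Expanding the bivector
$$\underline{e}_j\wedge\underline{e}_k=\tfrac{1}{2}(T_j\cdot T_k-T_k\cdot T_j)+(\nu_kT_j-\nu_jT_k)\cdot N$$
in $Cl_{\Sigma}$ decomposes $\Gamma(X)$ into a pure tangent bivector, equal to $\Gamma_1(X)$, and a tangent-times-$N$ part, equal (up to the sign fixed by the Clifford convention) to $\Gamma_2(X)\cdot N$. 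Translating the two summands via the identities $(X\cdot Y\cdot\psi)^{*}=-X\cdot Y\cdot\psi^{*}$ for $X,Y\in TM$ (obtained by iterating \eqref{id_estrella_1_riem}) and $N\cdot Y\cdot\psi^{*}=-i(Y\cdot\psi)^{*}$, the bivector part yields $\tfrac{1}{2}\Gamma_1(X)\cdot\psi$ and the $N$-part yields $\tfrac{i}{2}\Gamma_2(X)\cdot\psi$, matching the remaining terms of \eqref{inmersionRG}.

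With the termwise correspondence established, the equivalence follows directly from Theorem~\ref{thm main result}: a solution $\psi$ of \eqref{inmersionRG} with $|\psi^{+}|^2-|\psi^{-}|^2=1$ gives $\varphi=\psi^{*}\in\Gamma(U\Sigma)$ solving \eqref{gauss_grupos}, hence an isometric immersion with shape operator $S$; conversely, any such immersion produces a unit spinor $\varphi$, which can be placed in $\Sigma_0$ by replacing it with $\varphi\cdot a$ for an appropriate $a\in\mathrm{Spin}(\g)$, and $\psi:=(\varphi)^{*-1}$ then satisfies \eqref{inmersionRG}. The main obstacle is the sign-consistent bivector expansion of $\Gamma(X)\in\Lambda^{2}(TM\oplus E)$ in the tangent-normal splitting together with the careful bookkeeping of Clifford signs imposed by the identification of the appendix; once these are pinned down, the rest of the proof is a direct computation.
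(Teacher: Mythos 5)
Your proposal is correct and follows essentially the same route as the paper: apply the identification $\psi\mapsto\psi^{*}$ of \eqref{id_riemanniana_rev*}, translate each term of \eqref{inmersionRG} into the corresponding term of \eqref{gauss_grupos} using the bivector decomposition of $\Gamma(X)$ in the tangent–normal splitting (Lemma~\ref{gama_bivector_coord}) together with the $*$-compatibility identities (Lemma~\ref{gama_bivector_coord_prop}), and then invoke Theorem~\ref{thm main result}. The only difference is presentational — you unfold the computations that the paper delegates to those two lemmas and you explicitly flag the parity and sign bookkeeping, but the underlying argument is identical.
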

	The proof of theorem comes after the following lemmas.

	Recall the notation of Remark \ref{compatibilidad_coordenadas}: $(\underline{e}_1,\underline{e}_2,\underline{e}_3)$ is an orthonormal basis of invariant vector fields in $TM\oplus E$, and for $k={1,2,3}$, $T_k\in \Gamma(TM)$ and $\nu_k\in C^{\infty}(M)$ are such that $\underline{e}_k=T_k+\nu _k N$.
	The proof of the following lemma can be found in a different context in \cite{BRZ}.
	\begin{lem}\label{gama_bivector_coord} For all $X\in TM,$
		\begin{equation*}
		\Gamma(X)=\sum_{i}\varepsilon_i\langle X,T_i\rangle\sum_{j<k}\varepsilon_j\varepsilon_k\Gamma_{ij}^k\left(\frac{1}{2}(T_j\cdot T_k-T_k\cdot T_j)+(\nu_kT_j-\nu_jT_k)\cdot N\right),
		\end{equation*}
		where $\varepsilon_i=\langle \underline{e}_i,\underline{e}_i\rangle=\pm 1$.
	\end{lem}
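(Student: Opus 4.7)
The plan is to exploit linearity of $\Gamma$, reduce to computing $\Gamma(\underline{e}_i)$ in the Clifford algebra, and then expand in the decomposition $\underline{e}_j = T_j + \nu_j N$.

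First, since $X\in TM$ is orthogonal to $N\in \Gamma(E)$, we have $\langle X,\underline{e}_i\rangle = \langle X,T_i\rangle$, so in the orthonormal basis $(\underline{e}_1,\underline{e}_2,\underline{e}_3)$ of $TM\oplus E$ we may write $X=\sum_i \varepsilon_i \langle X,T_i\rangle \underline{e}_i$. By linearity of $\Gamma$ this gives
\begin{equation*}
\Gamma(X)=\sum_{i=1}^3 \varepsilon_i \langle X,T_i\rangle\, \Gamma(\underline{e}_i),
\end{equation*}
so it suffices to prove the claimed expansion for each $\Gamma(\underline{e}_i)$.

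Next, I identify $\Gamma(\underline{e}_i)\in\Lambda^2(TM\oplus E)$ with the corresponding bivector in $Cl_\Sigma$ via the standard identification of skew-symmetric operators with bivectors (this is the content of Appendix B referenced at the end of the introduction, and is consistent with the normalization fixed by \eqref{derivative_spinor}--\eqref{derivada_grupo}). Concretely, for any skew-symmetric $A$ the associated bivector is $\sum_{j<k}\varepsilon_j\varepsilon_k \langle A(\underline{e}_j),\underline{e}_k\rangle\, \underline{e}_j\cdot \underline{e}_k$. Applied to $A=\Gamma(\underline{e}_i)$ and using $\Gamma(\underline{e}_i)(\underline{e}_j)=\sum_m \varepsilon_m \Gamma_{ij}^m \underline{e}_m$ together with $\langle \underline{e}_m,\underline{e}_k\rangle=\varepsilon_k\delta_{mk}$, one obtains $\langle \Gamma(\underline{e}_i)(\underline{e}_j),\underline{e}_k\rangle =\Gamma_{ij}^k$, hence
\begin{equation*}
\Gamma(\underline{e}_i)=\sum_{j<k}\varepsilon_j\varepsilon_k\, \Gamma_{ij}^k\, \underline{e}_j\cdot\underline{e}_k \ \in\ Cl_\Sigma.
\end{equation*}

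The last step is a purely Clifford-algebraic expansion. Writing $\underline{e}_j=T_j+\nu_j N$ and using that $N$ anticommutes with every vector of $TM$ in $Cl_\Sigma$ (since $N\perp TM$), a direct computation gives
\begin{equation*}
\underline{e}_j\cdot \underline{e}_k \;=\; T_j\cdot T_k+(\nu_k T_j-\nu_j T_k)\cdot N+\varepsilon\,\nu_j\nu_k,
\end{equation*}
where $\varepsilon=N^2=\langle N,N\rangle$. Using $T_j\cdot T_k=\tfrac12(T_j\cdot T_k-T_k\cdot T_j)+\langle T_j,T_k\rangle$ and the orthonormality relation $\langle T_j,T_k\rangle+\varepsilon \nu_j\nu_k=\langle \underline{e}_j,\underline{e}_k\rangle=0$ for $j<k$ from \eqref{ortonormalidad}, the scalar contributions cancel and
\begin{equation*}
\underline{e}_j\cdot \underline{e}_k\;=\;\tfrac12(T_j\cdot T_k-T_k\cdot T_j)+(\nu_k T_j-\nu_j T_k)\cdot N.
\end{equation*}
Plugging this back yields the formula of the lemma.

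The only genuinely delicate point is the identification used in the second step: the factor of $\tfrac12$ in \eqref{derivative_spinor} must match the convention identifying $\Lambda^2(TM\oplus E)$ with its image in $Cl_\Sigma$. Once that normalization is established (as in the appendix), the remaining steps are elementary manipulations in the Clifford algebra, the cancellation of the scalar part for $j<k$ via \eqref{ortonormalidad} being the small but essential observation that makes the two terms in the statement have exactly the claimed form.
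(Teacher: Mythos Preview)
Your argument is correct and is exactly the approach one expects (and the one carried out in \cite{BRZ}, to which the paper defers): expand $X$ in the left-invariant frame, use Lemma~\ref{lemaB1} to write each $\Gamma(\underline{e}_i)$ as $\sum_{j<k}\varepsilon_j\varepsilon_k\Gamma_{ij}^k\,\underline{e}_j\cdot\underline{e}_k$, and then expand $\underline{e}_j\cdot\underline{e}_k$ via $\underline{e}_\ell=T_\ell+\nu_\ell N$, with the scalar part killed by \eqref{ortonormalidad}. One small caveat: in the paper's Clifford convention $v\cdot w+w\cdot v=-2\langle v,w\rangle$ (see the proof of Proposition~\ref{isometria_haznormal_sitter}), so in fact $N^2=-\langle N,N\rangle=-\varepsilon$ and $T_j\cdot T_k=\tfrac12(T_j\cdot T_k-T_k\cdot T_j)-\langle T_j,T_k\rangle$; your two sign slips are consistent with the opposite convention and cancel each other, so the final identity is unaffected.
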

	
	After some calculations we have from the properties in \eqref{id_estrella_1_riem} the following lemma (details can be found in \cite{TBZJ}).
	\begin{lem}\label{gama_bivector_coord_prop}  For $T_j, T_k \in \Gamma(TM)$, $\nu_j, \nu_k\in C^{\infty}(M)$, $N$ a unit section of $E$ and $\psi$ a spinor field of $\Sigma M$ the following identities are true:
		\begin{enumerate}
			\item \begin{equation*}
			(T_j\cdot T_k -T_k\cdot T_j) \cdot\psi^*=((T_j\cdot T_k -T_k\cdot T_j) \cdot\psi)^*,
			\end{equation*}
			\item \begin{equation*}(\nu_k T_j-\nu_jT_k)\cdot N \cdot \psi^*=(i(\nu_k T_j-\nu_jT_k)\cdot \psi)^*,\end{equation*}
			\item \begin{equation*}-\frac{1}{2}\sum_{j=1}^{2}e_j\cdot B(X,e_j)\cdot \psi^*=\left(\frac{i}{2}S(X)\cdot \psi\right)^*.\end{equation*}
		\end{enumerate}
	\end{lem}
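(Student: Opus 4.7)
The three identities all follow from a single combination: the extrinsic/intrinsic correspondence $(X\cdot\psi)^*=iN\cdot X\cdot\psi^*$ in \eqref{id_estrella_1_riem} together with two basic facts inside the Clifford algebra $Cl_{1,2}$: the normal $N$ satisfies $N\cdot N=1$ (because $\langle N,N\rangle=-1$), and any tangential vector $Y\in TM$ anticommutes with $N$ (since $Y\perp N$). The scalar $i$ commutes with every Clifford element. With these ingredients the three claims reduce to short bookkeeping exercises, so my plan is to handle each of (1), (2), (3) as a direct calculation.

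For (1), I apply the correspondence twice. Starting from
\[
(T_j\cdot T_k\cdot\psi)^* \;=\; iN\cdot T_j\cdot (T_k\cdot\psi)^* \;=\; iN\cdot T_j\cdot iN\cdot T_k\cdot\psi^*,
\]
I commute the inner $N$ past $T_j$, pick up one minus sign, and then collapse $N\cdot N=1$. The scalar $i^2=-1$ cancels that sign and the right-hand side becomes $T_j\cdot T_k\cdot\psi^*$. The same computation with $j,k$ swapped gives $(T_k\cdot T_j\cdot\psi)^*=T_k\cdot T_j\cdot\psi^*$, and subtracting yields (1).

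For (2), I factor the scalar $i$ out of the star and apply the correspondence to the tangential combination $\nu_kT_j-\nu_jT_k\in TM$:
\[
\bigl(i(\nu_kT_j-\nu_jT_k)\cdot\psi\bigr)^* \;=\; i\cdot iN\cdot(\nu_kT_j-\nu_jT_k)\cdot\psi^* \;=\; -N\cdot(\nu_kT_j-\nu_jT_k)\cdot\psi^*.
\]
Anticommuting $N$ through the tangential factor flips the sign and produces exactly $(\nu_kT_j-\nu_jT_k)\cdot N\cdot\psi^*$, which is (2).

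For (3), I first rewrite the normal component using $\langle N,N\rangle=-1$, namely $B(X,e_j)=-\langle S(X),e_j\rangle\,N$, so that the extrinsic sum becomes
\[
-\tfrac{1}{2}\sum_{j=1}^{2}e_j\cdot B(X,e_j)\cdot\psi^* \;=\; \tfrac{1}{2}\sum_{j=1}^{2}\langle S(X),e_j\rangle\,e_j\cdot N\cdot\psi^* \;=\; \tfrac{1}{2}S(X)\cdot N\cdot\psi^*.
\]
On the intrinsic side, the correspondence together with $i^2=-1$ and one anticommutation give
\[
\bigl(\tfrac{i}{2}S(X)\cdot\psi\bigr)^* \;=\; \tfrac{i}{2}\cdot iN\cdot S(X)\cdot\psi^* \;=\; -\tfrac{1}{2}N\cdot S(X)\cdot\psi^* \;=\; \tfrac{1}{2}S(X)\cdot N\cdot\psi^*,
\]
matching the extrinsic expression and completing (3). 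The only delicate point in the entire argument is keeping the signs straight — those from $i^2=-1$ and those from commuting $N$ past tangential vectors — but there is no genuine obstacle beyond that bookkeeping.
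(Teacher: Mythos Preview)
Your proof is correct and follows precisely the approach the paper indicates: the paper does not give a detailed argument for this lemma, stating only that the identities follow ``after some calculations'' from the properties in \eqref{id_estrella_1_riem} (with details deferred to \cite{TBZJ}), and your computation is exactly that calculation. The only point worth making explicit is that the map $\psi\mapsto\psi^*$ is $\mathbb{C}$-linear (this is stated in Lemma~\ref{iso-rep} of the appendix), which justifies pulling the scalar $i$ through the star in steps (2) and (3); everything else is the anticommutation and $N\cdot N=1$ bookkeeping you describe.
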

	
	\begin{proof}[Proof of Theorem \ref{riemanniana}.]
		
		Let $\psi\in \Gamma(\Sigma M)$ be a solution of \eqref{inmersionRG} such that $\vert \psi^+\vert^2-\vert \psi^-\vert^2=1$. If we apply the isomorphism \eqref{id_riemanniana_rev*} to both sides of that equation we deduce from Lemmas \ref{gama_bivector_coord} and \ref{gama_bivector_coord_prop}  that
		\begin{equation}(\nabla_{_X}\psi)^*=(\frac{i}{2}S(X)\cdot \psi)^*+(\frac{1}{2}\Gamma_1(X)\cdot {\psi} +\frac{i}{2}\Gamma_2(X)\cdot \psi)^* \end{equation}
		is equivalent to
		\begin{equation}\label{interna_inmersion_riemanniana}\nabla_{_X}\psi^*= -\frac{1}{2}\sum_{j=1}^{2}e_j\cdot B(X,e_j)\cdot \psi^*+\frac{1}{2}\Gamma(X) \cdot \psi^*, \end{equation}
		where $\varphi:=\psi^*$ is in $\Gamma(U\Sigma);$ Equation \eqref{interna_inmersion_riemanniana} is equivalent to \eqref{gauss_grupos} and the conclusion follows from Theorem \ref{thm main result}.
	\end{proof}
	The following corollary gives a new proof of the spinorial characterization of the immersions in
	$\R^{1,2}$ obtained in \cite{BP}. See \cite{LR} for a result using two spinor fields.
	\begin{cor}\label{coro_riemanniana} 
		If $G=\R_1^3$ the following statements are equivalent:
		\begin{enumerate}
			\item There exists an isometric immersion of $M$ into $\R_1^3$ with shape operator $S$.
			\item There exists $\psi\in \Gamma(\Sigma M)$ solution of 
			\begin{equation}\label{inmersion_R12} \nabla_{_X}\psi=\frac{i}{2}S(X)\cdot \psi,\end{equation} 
			for all $X\in TM$ and such that $\vert \psi^{+}\vert ^2-\vert \psi^{-}\vert ^2=1$ .
		\end{enumerate}
	\end{cor}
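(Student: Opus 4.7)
The plan is to deduce the corollary directly from Theorem \ref{riemanniana} by showing that in the special case $G=\R^3_1$ the ``ambient'' terms $\Gamma_1$ and $\Gamma_2$ vanish identically, so that equation \eqref{inmersionRG} collapses to \eqref{inmersion_R12}.

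First I would observe that $\R^3_1$, viewed as a Lie group, is abelian with bi-invariant flat Lorentzian metric. Concretely, the Maurer–Cartan form \eqref{maurer_cartan} identifies left-invariant vector fields with constant vector fields on $\R^3$, and for the standard pseudo-orthonormal basis $(e_1^o,e_2^o,e_3^o)$ of $\g=\R^3_1$ the Levi-Civita connection satisfies $\overline{\nabla}_{e_i^o}e_j^o=0$. By the defining relation \eqref{gama}, this means that $\Gamma(e_i^o)(e_j^o)=0$ for all $i,j$, so all structure constants $\Gamma_{ij}^k$ of Remark \ref{compatibilidad_coordenadas} vanish.

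Consequently, from the explicit formulas for $\Gamma_1$ and $\Gamma_2$ recalled just before Theorem \ref{riemanniana}, we get
\begin{equation*}
\Gamma_1(X)=0\quad\text{and}\quad \Gamma_2(X)=0\qquad\text{for all }X\in TM.
\end{equation*}
Moreover, the compatibility conditions \eqref{ortonormalidad}--\eqref{derivada_nuj} specialize to the statement that there exist functions $\nu_i\in C^\infty(M)$ and vector fields $T_i\in\Gamma(TM)$, $i=1,2,3$, satisfying $\langle T_i,T_j\rangle-\nu_i\nu_j=\varepsilon_i\delta_{ij}$, $\nabla_X T_j=\nu_j S(X)$ and $d\nu_j(X)=-\langle S(X),T_j\rangle$; these are the classical integrability conditions automatically available once the Gauss–Codazzi equations of an immersion into $\R^3_1$ hold, and in the spinorial formulation of Theorem \ref{riemanniana} they are already encoded in the existence of the solution $\psi$.

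With $\Gamma_1\equiv 0$ and $\Gamma_2\equiv 0$, equation \eqref{inmersionRG} becomes exactly \eqref{inmersion_R12}, and the normalization $|\psi^+|^2-|\psi^-|^2=1$ is preserved. Applying Theorem \ref{riemanniana} gives the equivalence between the existence of such a $\psi$ and the existence of an isometric immersion $M\to\R^3_1$ with shape operator $S$. The only mildly delicate point, which I would verify carefully, is the vanishing $\Gamma_{ij}^k=0$: this rests on the fact that in $\R^3_1$ a left-invariant section of $TG\cong \R^3\times\g$ is a genuinely parallel vector field (the translations being isometries and the metric being the standard one), so that \eqref{gama} yields zero. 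Everything else is a direct specialization of the general theorem.
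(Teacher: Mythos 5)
Your proof is correct and is essentially the paper's argument: the paper likewise observes that $\Gamma_{ij}^k=0$ in $\R_1^3$, so $\Gamma_1\equiv\Gamma_2\equiv 0$, and then invokes Theorem \ref{riemanniana}. You simply spell out in more detail why the Christoffel constants vanish (abelian group, flat bi-invariant metric, left-invariant sections parallel), which is a correct and harmless elaboration.
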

	\begin{proof}
		In this case $\Gamma_{ij}^k=0$ for all $i,j,k\in\{1,2,3\}$ and then $\Gamma_1(X)=0=\Gamma_2(X)$ .
	\end{proof}
	\begin{rem}\label{formula_explicita_componentes}
		If $\psi=\psi^++\psi^-$ is as in Theorem \ref{riemanniana} then the corresponding $1$-form $\xi(X)=\langle \langle X\cdot \varphi, \varphi\rangle \rangle$ of Theorem \ref{thm main result} reads 
		\begin{equation}\label{explicita}
		\xi(X)=2iI \Im m\langle X\cdot \psi^- ,\psi^{+}\rangle+ J(\langle X\cdot \psi^+, \alpha(\psi^+) \rangle-\langle X\cdot \psi^-, \alpha(\psi^-) \rangle),
		\end{equation}
		where $\alpha:\Sigma M \longrightarrow \Sigma M$ is a quaternionic structure; the immersion is thus explicitly given in terms of $\psi;$ the proof may be found in \cite[Proposition 3.4.13]{TBZJ}.
	\end{rem}
	
	\subsection{Immersion of a Lorentzian surface}
	Let $(M,g)$ be a simply connected Lorentzian surface and let $E=M\times \R$ be endowed with the metric $+d\nu^2$ in each fiber. Let $B:TM\times TM \longrightarrow E$ be a bilinear symmetric form and consider its associated symmetric operator $S:TM\longrightarrow TM$ given by
	$$S(X)=\sum_{j=1}^2 \varepsilon_j\langle B(X,e_j),N \rangle e_j,\ \varepsilon_j=\langle e_j,e_j\rangle=\pm 1,$$ 
	for all $X\in TM$ and $N$ a fixed unit section of $E$. We assume that there exist vector fields $T_i\in \Gamma(TM)$ and functions $\nu_i \in C^{\infty}(M)$, $i = 1, 2, 3,$ satisfying the compatibility conditions \eqref{ortonormalidad}-\eqref{derivada_nuj}. Let us set for all $X\in TM$
	\begin{equation}\label{gamma_tilda}
	\overset{\sim}{\Gamma}(X)=\sum_{i}\varepsilon_i\langle X,T_i\rangle\sum_{j<k}\varepsilon_j\varepsilon_k\Gamma_{ij}^k\left(\frac{1}{2}(T_j\cdot T_k-T_k\cdot T_j)+(\nu_kT_j-,\nu_jT_k)\right).
	\end{equation} 
	
	The decomposition $Cl_{1,2}=Cl^0_{1,2}\oplus Cl^1_{1,2}$ induces a splitting $\Sigma=\Sigma_0\oplus\Sigma_1$ of the spinor bundle, and there is a natural identification
	\begin{eqnarray}
	\Sigma M&\rightarrow&\Sigma_0\label{identif_lorentziana}\\
	\psi&\mapsto&\psi^*\nonumber
	\end{eqnarray}
	such that
	\begin{equation}\label{prop_identif_lorentziana}
	(\nabla_X\psi)^*=\nabla_X\psi^*,\hspace{.3cm}(X\cdot\psi)^*=X\cdot N \cdot \psi^*,\hspace{.3cm}|\psi^+|^2-|\psi^-|^2=\langle\langle\psi^*,\psi^*\rangle\rangle
	\end{equation}
	for all $X\in TM$ (details are given in Appendix \ref{spinor_bundle_identification}).
	
	\begin{thm}\label{lorentziana} The following statements are equivalent:
		\begin{enumerate}
			\item There exists an isometric immersion of $M$ into $G$ with shape operator $S$.
			\item There exists $\psi\in \Gamma(\Sigma M)$ solution of
			\begin{equation}\label{lorentziana_grupo} \nabla_{_X}\psi=-\frac{1}{2}S(X)\cdot \psi+\frac{1}{2}\overset{\sim}{\Gamma}(X)\cdot {\psi} , \end{equation}
			for all $X\in TM$ and such that $\vert \psi^+ \vert^2-\vert \psi^- \vert^2=1$.
		\end{enumerate}
	\end{thm}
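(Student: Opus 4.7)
The plan is to mirror the proof of Theorem \ref{riemanniana}: apply the identification \eqref{identif_lorentziana} to transform the intrinsic equation \eqref{lorentziana_grupo} into the extrinsic Gauss equation \eqref{gauss_grupos}, and then invoke Theorem \ref{thm main result}. The central point is that in the Lorentzian case the identification acts via $(X\cdot\psi)^*=X\cdot N\cdot\psi^*$ without an $i$ factor, which is precisely what produces $\overset{\sim}{\Gamma}$ from the extrinsic $\Gamma$ of Lemma \ref{gama_bivector_coord}: the Clifford factor $N$ multiplying $(\nu_k T_j-\nu_j T_k)$ in $\Gamma$ is absorbed by the identification, so it is dropped in the intrinsic expression \eqref{gamma_tilda}.

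First I would establish a Lorentzian counterpart of Lemma \ref{gama_bivector_coord_prop}. Applying \eqref{prop_identif_lorentziana} twice and using the Clifford anticommutation $N\cdot X=-X\cdot N$ for $X\in TM$ together with $N\cdot N=-\langle N,N\rangle=-1$, one obtains
\begin{equation*}
((T_j\cdot T_k-T_k\cdot T_j)\cdot\psi)^*=(T_j\cdot T_k-T_k\cdot T_j)\cdot\psi^*
\end{equation*}
and
\begin{equation*}
((\nu_k T_j-\nu_j T_k)\cdot\psi)^*=(\nu_k T_j-\nu_j T_k)\cdot N\cdot\psi^*.
\end{equation*}
Summed with the coefficients $\varepsilon_i\varepsilon_j\varepsilon_k\Gamma_{ij}^k\langle X,T_i\rangle$ and compared with Lemma \ref{gama_bivector_coord}, these identities give $(\overset{\sim}{\Gamma}(X)\cdot\psi)^*=\Gamma(X)\cdot\psi^*$ for every $X\in TM$. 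For the shape-operator term, since $E$ has rank one and $\langle N,N\rangle=1$ one has $B(X,e_j)=\langle B(X,e_j),N\rangle N$, so that $\sum_{j=1}^{2}\varepsilon_j\, e_j\cdot B(X,e_j)=S(X)\cdot N$, and therefore $(S(X)\cdot\psi)^*=\sum_j\varepsilon_j e_j\cdot B(X,e_j)\cdot\psi^*$.

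With these identities in hand, applying the identification to \eqref{lorentziana_grupo} and setting $\varphi:=\psi^*$ turns it into exactly equation \eqref{gauss_grupos}; the condition $|\psi^+|^2-|\psi^-|^2=1$ becomes $\langle\langle\varphi,\varphi\rangle\rangle=1$, i.e.\ $\varphi\in\Gamma(U\Sigma)$. Theorem \ref{thm main result} then produces an isometric immersion $F\colon M\to G$ whose normal bundle is $E$ and whose second fundamental form is $B$, hence whose shape operator is $S$. The converse direction follows by reading the argument backwards: starting from such an immersion, Theorem \ref{thm main result} yields a $\varphi\in\Gamma(U\Sigma)$, and the identification \eqref{identif_lorentziana} returns a $\psi\in\Gamma(\Sigma M)$ satisfying \eqref{lorentziana_grupo} with the required norm condition.

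The delicate part is the sign bookkeeping: one must track simultaneously the signature $\varepsilon_j$ of each tangent direction, the sign $N\cdot N=-1$ (to be contrasted with $+1$ in the Riemannian setting, where $E$ carried the metric $-d\nu^2$), and the way the role of the imaginary unit $i$ from Theorem \ref{riemanniana} is replaced by the Clifford factor $N$. Once these are organized consistently, the proof is a purely algebraic transcription of the Riemannian argument, with no genuinely new geometric input.
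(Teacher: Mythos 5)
Your proposal is correct and follows essentially the same route as the paper: apply the identification $\psi\mapsto\psi^*$ of \eqref{identif_lorentziana} using the properties \eqref{prop_identif_lorentziana}, translate equation \eqref{lorentziana_grupo} into the extrinsic Gauss equation \eqref{gauss_grupos}, and invoke Theorem \ref{thm main result}. The paper's proof is more terse (it merely asserts the three conversion identities and refers to Lemma \ref{gama_bivector_coord}), whereas you spell out the Clifford algebra bookkeeping — the anticommutation with $N$, the sign $N\cdot N=-1$, and the identity $\sum_j\varepsilon_j e_j\cdot B(X,e_j)=S(X)\cdot N$ — all of which checks out.
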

	\begin{proof}
		Let $\psi$ be a section of $\Sigma M$ with $\vert \psi^+\vert^2- \vert \psi^-\vert^2=1$ that satisfies \eqref{lorentziana_grupo}. If we apply the isomorphism \eqref{identif_lorentziana} to that equation it follows from the properties \eqref{prop_identif_lorentziana} that $(\nabla_{_X}\psi)^*=\nabla_X\psi^*$, 
		$(-\frac{1}{2}S(X)\cdot \psi)^*
		=-\frac{1}{2}\sum_{j=1}^2 \varepsilon_j  e_j \cdot B(X,e_j)\cdot \psi^*$,
		and, if $\Gamma(X)$ and $\overset{\sim}{\Gamma}(X)$ are given by Lemma \ref{gama_bivector_coord} and equation \eqref{gamma_tilda} respectively, that
		\begin{equation*}
		\left(\frac{1}{2}\overset{\sim}{\Gamma}(X)\cdot \psi\right)^*=\frac{1}{2}{\Gamma}(X)\cdot \psi^*.
		\end{equation*}
		Therefore, the following equation 
		\begin{equation}\label{gauss_estrella_lorentz}
		(\nabla_{_X}\psi)^*=(-\frac{1}{2}S(X)\cdot \psi)^*+(\frac{1}{2}\overset{\sim}{\Gamma}(X)\cdot {\psi})^*,
		\end{equation}
		is equivalent to
		\begin{equation}\label{interna_inmersion_lorentziana}
		\nabla_{_X}\psi^*=-\frac{1}{2}\sum_{j=1}^2 \varepsilon_j  e_j \cdot B(X,e_j)\cdot \psi^*+\frac{1}{2}{\Gamma}(X)\cdot {\psi}^*,
		\end{equation}
		so if we set $\varphi:=\psi^*$ the Equation \eqref{interna_inmersion_lorentziana} is equivalent to \eqref{gauss_grupos} and the result is a consequence of Theorem \ref{thm main result}.
	\end{proof}
	
	\begin{cor}
		If $M$ is a simply connected Lorentzian surface and $G=\R_1^3$ then the following statements are equivalent:
		\begin{enumerate}
			\item There exists an isometric immersion of $M$ in $\R_1^3$ with shape operator $S$.
			\item There exists $\psi\in \Gamma(\Sigma M)$ solution of
			\begin{equation*}
			\nabla_{_X}\psi=-\frac{1}{2}S(X)\cdot \psi,
			\end{equation*} 
			for all $X\in TM$ and such that $\vert\psi^+\vert^2- \vert\psi^-\vert^2=1$. 
		\end{enumerate}
	\end{cor}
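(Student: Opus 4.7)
The plan is to derive the corollary as a direct specialization of Theorem \ref{lorentziana} to the abelian Lie group $G=\R_1^3$. First I would observe that $\R_1^3$, viewed as a Lie group, is abelian, and its left-invariant metric is just the flat Minkowski metric. Any left-invariant (i.e., constant-coefficient) vector field is therefore parallel for the Levi-Civita connection $\overline{\nabla}$, so from the very definition of $\Gamma$ in (\ref{gama}) we get $\Gamma(X)(Y)=\overline{\nabla}_X Y=0$ for all left-invariant $X,Y\in\g$. Equivalently, in the orthonormal frame $(e_1^o,e_2^o,e_3^o)$ of Remark \ref{compatibilidad_coordenadas}, the structure coefficients $\Gamma_{ij}^k$ vanish identically.

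Then I would plug $\Gamma_{ij}^k\equiv 0$ into the definition (\ref{gamma_tilda}) of $\overset{\sim}{\Gamma}$, obtaining $\overset{\sim}{\Gamma}(X)=0$ for every $X\in TM$. The spinor equation (\ref{lorentziana_grupo}) of Theorem \ref{lorentziana} thereby reduces to
\begin{equation*}
\nabla_X\psi=-\tfrac{1}{2}S(X)\cdot\psi,
\end{equation*}
and the normalization $|\psi^+|^2-|\psi^-|^2=1$ is unchanged. Applying Theorem \ref{lorentziana} then yields at once the equivalence between the existence of such a spinor field on $M$ and the existence of an isometric immersion $M\hookrightarrow\R_1^3$ with shape operator $S$. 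I do not expect any real obstacle here: the argument is a one-line specialization, entirely parallel to the Riemannian case treated in Corollary \ref{coro_riemanniana}, the only substantive ingredient being the flatness of $\R_1^3$ as a Lie group.
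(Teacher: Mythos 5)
Your proposal is correct and follows the paper's own proof exactly: the paper likewise notes that $\Gamma_{ij}^k=0$ for all $i,j,k$, hence $\overset{\sim}{\Gamma}(X)=0$, and then invokes Theorem \ref{lorentziana}. The only difference is that you spell out why the $\Gamma_{ij}^k$ vanish (the abelian group structure and flatness of the Minkowski metric), which the paper leaves implicit.
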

	See \cite{La} and \cite{LR} for a similar result involving two spinor fields.
	\begin{proof}
		In that case $\Gamma_{ij}^k=0$ for all $i,j,k\in \{1,2,3\}$ and then $\overset{\sim}{\Gamma}(X)=0$ for all $X\in TM$.
	\end{proof}
	
	\section{Representation of a surface in Lorentzian products}\label{products}
	In this section we characterize the immersions of an orientable Riemannian surface $(M,g)$ in the symmetric Lorentzian products $\mathbb{H}^2\times \R_{-}$, $\R\times \mathbb{S}_1^2$ and $\R\times \mathbb{H}_1^2$ which are  locally isometric to a Lie group with one of the following Lie algebras:
	\begin{eqnarray*}
		\mathfrak{a}:\begin{array}{cclc}
			\left[e_1,e_2\right]&=&\alpha e_2\\
			\left[e_1,e_3\right]&=&0\\
			\left[e_2,e_3\right]&=
			&0
		\end{array}& 
		\mathfrak{b}:\begin{array}{cclc}
			\left[e_1,e_2\right]&=&0&\\
			\left[e_1,e_3\right]&=&\alpha e_1  \\
			\left[e_2,e_3\right]&=& 0
		\end{array}& \mathfrak{c}:\begin{array}{cclc}
			\left[e_1,e_2\right]&=&0&\\
			\left[e_1,e_3\right]&=& \delta e_3\\
			\left[e_2,e_3\right]&=&0,\\
		\end{array}
	\end{eqnarray*}
	%\vspace{0pt}
	where $\alpha, \delta \neq 0,$ $\langle  e_1,e_1 \rangle=\langle e_2,e_2 \rangle=1=-\langle e_3,e_3 \rangle$ and $\langle e_i,e_j \rangle=0$ for $i\neq j$. By the Koszul formula, the only non-zero components for the covariant derivative $\overline{\nabla}$ are, in each case,
	\begin{eqnarray}
	\begin{array}{ll}
	\mathfrak{a}:&\overline{\nabla}_{e_2}e_1=-\alpha e_2 \\
	&\overline{\nabla}_{e_2}e_2=\alpha e_1,
	\end{array}\label{compg7}&
	\begin{array}{ll}
	\mathfrak{b}:&\overline{\nabla}_{e_1}e_1=\alpha e_3 \\ &\overline{\nabla}_{e_1}e_3=\alpha e_1,
	\end{array}
	\label{compg5}&
	\begin{array}{ll}
	\mathfrak{c}:&\overline{\nabla}_{e_3}e_1=-\delta e_3 \\ &\overline{\nabla}_{e_3}e_3=-\delta e_1.
	\end{array}
	\label{compg6}
	\end{eqnarray}
	We assume the following compatibility conditions (see Remark \ref{compatibilidad_coordenadas}): there exist tangent vector fields $T_i\in \Gamma(TM)$ and functions $\nu_i\in C^{\infty}(M)$ such that 
	$$ \langle T_i,T_j\rangle - \nu_i \nu_j=\varepsilon_i\delta_{ij},\hspace{.5cm}1\leq i,j\leq 3$$
	and, for $1\leq j\leq 3,$
	$$\nabla_X T_j= \sum_{1\leq i,k\leq 3}\varepsilon_i \varepsilon_k \Gamma_{ij}^k \langle X, T_i \rangle T_k +\nu_j S(X)$$
	and
	$$d\nu_j(X)=\sum_{1\leq i,k\leq 3}\varepsilon_i\varepsilon_k \Gamma_{ij}^k \langle X, T_i \rangle\nu_k -h(X,T_j),$$
	where $S(X)=B^*(X, N)$, $h(X,T_j)=\langle B(X,T_j), N\rangle$ and $\varepsilon_j=\pm 1$.
	\begin{thm}\label{H2xR}
		If $M$ is simply connected then the following statements are equivalent:
		\begin{enumerate}
			\item There exists $\psi \in \Gamma(\Sigma M)$ solution of
			\begin{equation}\label{intrinseca_H2R}
			\nabla_{_X}\psi=\frac{i}{2}S(X)\cdot \psi -\frac{\alpha}{2} \langle X,T_2\rangle (iT_3+\nu_3)\cdot \omega \cdot \psi
			\end{equation}
			for all $X\in TM$ and such that $\vert \psi^+\vert^2-\vert \psi^-\vert^2=1$. Here $\omega=\epsilon_1\cdot \epsilon_2$ where $(\epsilon_1,\epsilon_2)$ is a positively oriented orthonormal basis of $M$.
			\item There exists an isometric immersion of $M$ into $\mathbb{H}^2\times \R_{-}$ with shape operator $S$.
		\end{enumerate}
	\end{thm}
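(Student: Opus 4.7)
The plan is to deduce Theorem \ref{H2xR} from the extrinsic statement of Theorem \ref{thm main result} via the identification $\psi\mapsto\psi^{*}$ of \eqref{id_riemanniana_rev*}, exactly as in the proof of Theorem \ref{riemanniana}. First I read off the structure constants of $\mathfrak{a}$ from the Koszul formulas \eqref{compg7}: in the notation of Remark \ref{compatibilidad_coordenadas} the only non-zero entries are $\Gamma_{21}^{2}=-\alpha$ and $\Gamma_{22}^{1}=\alpha$. Substituting into Lemma \ref{gama_bivector_coord}, only the triple $(i,j,k)=(2,1,2)$ contributes to the restricted sum $j<k$ and the extrinsic bivector collapses to
\begin{equation*}
\Gamma(X)=-\alpha\,\langle X,T_2\rangle\,\underline{e}_1\cdot\underline{e}_2,\qquad \underline{e}_i:=T_i+\nu_i N.
\end{equation*}
Combined with Lemma \ref{gama_bivector_coord_prop}(3), which handles the shape-operator term, showing that the intrinsic equation \eqref{intrinseca_H2R} is equivalent to the extrinsic Gauss equation \eqref{gauss_grupos} applied to $\varphi:=\psi^{*}$ reduces to the single Clifford identity
\begin{equation*}
\underline{e}_1\cdot\underline{e}_2\cdot\psi^{*}=\bigl((iT_3+\nu_3)\cdot\omega\cdot\psi\bigr)^{*}.
\end{equation*}

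This identity is the heart of the argument. Both $(\underline{e}_1,\underline{e}_2,\underline{e}_3)$ and $(\epsilon_1,\epsilon_2,N)$ are positively oriented orthonormal frames of the rank-three bundle $TM\oplus E$ of signature $(1,2)$, so their Clifford volume elements coincide, i.e.\ $\Omega:=\underline{e}_1\underline{e}_2\underline{e}_3=\epsilon_1\epsilon_2 N=\omega\cdot N$. Since $\underline{e}_3^{\,2}=1$, one obtains $\underline{e}_1\underline{e}_2=\Omega\cdot\underline{e}_3=\omega\cdot N\cdot(T_3+\nu_3 N)$, and a short manipulation using $N^{2}=1$, $\omega N=N\omega$, and $\omega T=-T\omega$ for $T\in TM$ rewrites this as $(T_3\cdot N+\nu_3)\cdot\omega$. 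On the intrinsic side, the translation rules \eqref{id_estrella_1_riem} give $(\omega\cdot\psi)^{*}=\omega\cdot\psi^{*}$ and $(T_3\cdot\omega\cdot\psi)^{*}=iN\cdot T_3\cdot\omega\cdot\psi^{*}$; hence
\begin{equation*}
\bigl((iT_3+\nu_3)\cdot\omega\cdot\psi\bigr)^{*}=(\nu_3-N\cdot T_3)\cdot\omega\cdot\psi^{*}=(T_3\cdot N+\nu_3)\cdot\omega\cdot\psi^{*},
\end{equation*}
the last equality coming from $T_3 N=-NT_3$ (tangent vs.\ normal anti-commute), which closes the identity.

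With the equivalence between \eqref{intrinseca_H2R} and \eqref{gauss_grupos} established (the unit-spinor condition $|\psi^{+}|^{2}-|\psi^{-}|^{2}=1$ corresponding to $\varphi\in\Gamma(U\Sigma)$ by the third relation in \eqref{id_estrella_1_riem}), Theorem \ref{thm main result} then produces the desired isometric immersion into the simply connected metric Lie group with Lie algebra $\mathfrak{a}$, which is globally isometric to $\mathbb{H}^{2}\times\R_{-}$. The main obstacle I anticipate is purely sign bookkeeping: one must choose mutually compatible space and time orientations so that $\Omega=\omega\cdot N$ holds without a sign, and one must carefully track the factors of $i$ produced by the translation rule $(X\cdot\psi)^{*}=iN\cdot X\cdot\psi^{*}$ at each Clifford manipulation; once these conventions are pinned down, the argument reduces to the computation sketched above.
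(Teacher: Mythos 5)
Your proposal is correct and follows essentially the same route as the paper's proof: compute the bivector $\Gamma(X)=-\alpha\langle X,T_2\rangle\,\underline{e}_1\cdot\underline{e}_2$ for the Lie algebra $\mathfrak{a}$, rewrite it via the volume element $\underline{e}_1\underline{e}_2\underline{e}_3=\omega\cdot N$ as $(-N\cdot T_3+\nu_3)\cdot\omega$, then transport through the identification $\psi\mapsto\psi^*$ and invoke Theorem \ref{thm main result}. The only cosmetic difference is that you enter the computation of $\Gamma(X)$ through Lemma \ref{gama_bivector_coord} rather than directly through Lemma \ref{lemaB1}, but these are equivalent.
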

	\begin{proof}
		We prove that the existence of a solution of equation \eqref{intrinseca_H2R} is equivalent to the existence of a solution of equation \eqref{gauss_grupos} and then use Theorem \ref{thm main result} to obtain the result. It follows from Lemma \ref{lemaB1} that the bivector representing $\Gamma(X)$ is
		\begin{equation*}
		\Gamma(X)=\frac{1}{2}(\underline{e}_1\cdot\Gamma(X)(\underline{e}_1)+\underline{e}_2\cdot\Gamma(X)(\underline{e}_2)-\underline{e}_3\cdot\Gamma(X)(\underline{e}_3)),
		\end{equation*}
		where $(\underline{e}_1,\underline{e}_2,\underline{e}_3)$ is as in Remark \ref{compatibilidad_coordenadas}. By \eqref{compg7} we have that
		\begin{eqnarray*}
			\Gamma(X)(\underline{e}_1)=-\alpha \langle X,\underline{e}_2\rangle \underline{e}_2,&
			\Gamma(X)(\underline{e}_2)=\alpha \langle X,\underline{e}_2\rangle \underline{e}_1 & \text{and}\ \ \  \Gamma(X)(\underline{e}_3)=0,
		\end{eqnarray*}
		and then $
		\Gamma(X)=-\alpha \langle X,\underline{e}_2\rangle \underline{e}_1\cdot \underline{e}_2.$ Since $\underline{e}_k=T_k+\nu_k N$ and $\underline{e}_1\cdot \underline{e}_2 \cdot \underline{e}_3=\omega \cdot N$ it follows that
		\begin{eqnarray*}
			\Gamma(X)&=&-\alpha \langle X,\underline{e}_2\rangle \underline{e}_1\cdot \underline{e}_2\\
			&=&-\alpha \langle X,\underline{e}_2\rangle \underline{e}_1\cdot \underline{e}_2\cdot \underline{e}_3\cdot \underline{e}_3\\
			&=&-\alpha \langle X,T_2\rangle \omega \cdot N\cdot (T_3+\nu_3 N)\\
			&=&-\alpha \langle X,T_2\rangle(-N \cdot T_3+\nu_3 )\cdot \omega .
		\end{eqnarray*}

		We now apply to the equation \eqref{intrinseca_H2R} the isomorphism \eqref{id_riemanniana_rev*} to obtain
		\begin{eqnarray*}
			-\frac{1}{2}\sum_{j=1}^{2}e_j\cdot B(X,e_j)\cdot \psi ^*=\left(\frac{i}{2}S(X)\cdot \psi \right)^*
		\end{eqnarray*}
		and
		\begin{eqnarray*}
			\frac{1}{2}\Gamma(X)\cdot \psi^*=\left( -\frac{1}{2}\alpha \langle X,T_2\rangle(i T_3+\nu_3 )\cdot \omega \cdot \psi\right)^*. \end{eqnarray*}
		Therefore, if $\varphi:=\psi^*$, the equation 
		\begin{equation*}
		\nabla_{_X} \varphi =	-\frac{1}{2}\sum_{j=1}^{2}e_j\cdot B(X,e_j)\cdot \varphi +\frac{1}{2}\Gamma(X)\cdot \varphi
		\end{equation*}
		is equivalent to \eqref{intrinseca_H2R}.
	\end{proof}
	
	The proofs of Theorems \ref{RxS_12} and \ref{RxH12} below are similar to the proof of Theorem \ref{H2xR} so we omit them.
	
	\begin{thm}\label{RxS_12}
		If $M$ is simply connected, the following statements are equivalent:
		\begin{enumerate}
			\item There exists $\psi \in \Gamma(\Sigma M)$ solution of
			\begin{equation}\label{intrinseca_RS12}
			\nabla_{_X}\psi=\frac{i}{2}S(X)\cdot \psi +\frac{\alpha}{2} \langle X,T_1\rangle (iT_2+\nu_2)\cdot \omega \cdot \psi
			\end{equation}
			for all $X\in TM$ and such that $\vert \psi^+\vert^2-\vert \psi^-\vert^2=1$.
			\item There exists an isometric immersion of $M$ into $\R\times\mathbb{S}_1^2 $ with shape operator $S$.
		\end{enumerate}
	\end{thm}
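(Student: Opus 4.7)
The plan is to mirror the proof of Theorem \ref{H2xR}, changing only the computation of the bivector $\Gamma(X)$ to reflect the relevant Lie algebra. Since $\R\times\mathbb{S}_1^2$ is (locally) a Lie group with Lie algebra $\mathfrak{b}$, I will use the covariant derivatives in \eqref{compg5}: $\overline{\nabla}_{e_1}e_1=\alpha e_3$ and $\overline{\nabla}_{e_1}e_3=\alpha e_1$, all other covariant derivatives of basis vectors being zero. This gives, for left-invariant vectors transported to $TM\oplus E$ via $f$,
\[
\Gamma(X)(\underline{e}_1)=\alpha\langle X,\underline{e}_1\rangle\underline{e}_3,\quad \Gamma(X)(\underline{e}_3)=\alpha\langle X,\underline{e}_1\rangle\underline{e}_1,\quad \Gamma(X)(\underline{e}_2)=0.
\]

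Next I apply Lemma \ref{lemaB1} to package $\Gamma(X)$ as a bivector in $Cl_\Sigma$. With the signs $\varepsilon_1=\varepsilon_2=1$, $\varepsilon_3=-1$, the formula
\[
\Gamma(X)=\tfrac{1}{2}\bigl(\underline{e}_1\cdot\Gamma(X)(\underline{e}_1)+\underline{e}_2\cdot\Gamma(X)(\underline{e}_2)-\underline{e}_3\cdot\Gamma(X)(\underline{e}_3)\bigr)
\]
collapses (using $\underline{e}_1\cdot\underline{e}_3=-\underline{e}_3\cdot\underline{e}_1$) to $\Gamma(X)=\alpha\langle X,\underline{e}_1\rangle\,\underline{e}_1\cdot\underline{e}_3$. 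Then, just as in the proof of Theorem \ref{H2xR}, I multiply and divide by $\underline{e}_2$ and use $\underline{e}_1\cdot\underline{e}_2\cdot\underline{e}_3=\omega\cdot N$ together with $\underline{e}_2=T_2+\nu_2 N$ and $N^2=-1$ to rewrite
\[
\Gamma(X)=\alpha\langle X,T_1\rangle\,\omega\cdot(-N\cdot T_2+\nu_2).
\]
This is precisely the analogue, for $\mathfrak{b}$, of the expression obtained at the end of the display in the proof of Theorem \ref{H2xR}.

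Applying the identification \eqref{id_riemanniana_rev*} and the properties \eqref{id_estrella_1_riem} (or equivalently Lemma \ref{gama_bivector_coord_prop}), the Clifford multiplication by $-N\cdot T_2$ on $\psi^*$ corresponds to Clifford multiplication by $iT_2$ on $\psi$, so
\[
\tfrac{1}{2}\Gamma(X)\cdot\psi^*=\Bigl(\tfrac{1}{2}\alpha\langle X,T_1\rangle(iT_2+\nu_2)\cdot\omega\cdot\psi\Bigr)^{\!*}.
\]
Combined with $\bigl(\tfrac{i}{2}S(X)\cdot\psi\bigr)^{\!*}=-\tfrac{1}{2}\sum_{j=1}^2 e_j\cdot B(X,e_j)\cdot\psi^*$ (Lemma \ref{gama_bivector_coord_prop}(3)) and $(\nabla_X\psi)^*=\nabla_X\psi^*$, the intrinsic equation \eqref{intrinseca_RS12} is equivalent to \eqref{gauss_grupos} for $\varphi:=\psi^*\in\Gamma(U\Sigma)$. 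One then concludes by applying Theorem \ref{thm main result}.

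The only non-routine step is the sign bookkeeping in going from $\underline{e}_1\cdot\underline{e}_3$ to $\omega\cdot(-N\cdot T_2+\nu_2)$: the difference with the proof of Theorem \ref{H2xR} (where the corresponding coefficient was $-\alpha$ instead of $+\alpha$) is accounted for by the extra sign introduced when reducing $\underline{e}_1\cdot\underline{e}_3$ through $\underline{e}_2$, which anticommutes with both $\underline{e}_1$ and $\underline{e}_3$ with the opposite parity compared to the reduction of $\underline{e}_1\cdot\underline{e}_2$ through $\underline{e}_3$ used in the $\mathfrak{a}$-case. Once this sign is tracked correctly, the coefficient $+\tfrac{\alpha}{2}\langle X,T_1\rangle$ in \eqref{intrinseca_RS12} drops out automatically.
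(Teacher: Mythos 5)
Your approach is exactly the one the paper intends: it states that Theorems \ref{RxS_12} and \ref{RxH12} are proved just like Theorem \ref{H2xR}, and your computation of the bivector $\Gamma(X)=\alpha\langle X,T_1\rangle\,\underline{e}_1\cdot\underline{e}_3$ from the $\mathfrak{b}$-connection coefficients via Lemma \ref{lemaB1}, followed by the identification \eqref{id_riemanniana_rev*} and Theorem \ref{thm main result}, is correct, and your final equality $\tfrac{1}{2}\Gamma(X)\cdot\psi^*=\bigl(\tfrac{\alpha}{2}\langle X,T_1\rangle(iT_2+\nu_2)\cdot\omega\cdot\psi\bigr)^*$ is the one needed. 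Two small sign slips, though: in the paper's Clifford convention ($v\cdot v=-\langle v,v\rangle$, so $\underline{e}_3^2=N^2=+1$ and $\underline{e}_2^2=-1$), the correct intermediate form is $\Gamma(X)=\alpha\langle X,T_1\rangle\,\omega\cdot N\cdot\underline{e}_2=\alpha\langle X,T_1\rangle\,(-N\cdot T_2+\nu_2)\cdot\omega$, with $\omega$ on the \emph{right}; the form you wrote, $\omega\cdot(-N\cdot T_2+\nu_2)$, equals $(N\cdot T_2+\nu_2)\cdot\omega$ and would give $(-iT_2+\nu_2)\cdot\omega\cdot\psi$ after transfer, not $(iT_2+\nu_2)\cdot\omega\cdot\psi$. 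Also, your concluding remark misattributes the sign flip $-\alpha\leadsto+\alpha$ between the two theorems to the reduction through $\underline{e}_2$: in fact the two extra signs there (from $\underline{e}_2^{-1}=-\underline{e}_2$ and from anticommuting $\underline{e}_2$ past $\underline{e}_3$) cancel, and the flip comes entirely from the sign of the bivector you already computed, $\Gamma(X)=+\alpha\langle X,T_1\rangle\underline{e}_1\cdot\underline{e}_3$ versus $-\alpha\langle X,T_2\rangle\underline{e}_1\cdot\underline{e}_2$ in the $\mathfrak{a}$-case.
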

	\begin{thm}\label{RxH12}
		If $M$ is simply connected then the following statements are equivalent:
		\begin{enumerate}
			\item There exists $\psi \in \Gamma(\Sigma M)$ solution of
			\begin{equation}\label{intrinseca_RH12}
			\nabla_{_X}\psi=\frac{i}{2}S(X)\cdot \psi +\frac{\alpha}{2} \langle X,T_3\rangle (iT_2+\nu_2)\cdot \omega \cdot \psi
			\end{equation}
			for all $X\in TM$ and such that $\vert \psi^+\vert^2-\vert \psi^-\vert^2=1$.
			\item There exists an isometric immersion of $M$ into $\R\times\mathbb{H}_1^2 $ with shape operator $S$.
		\end{enumerate}
	\end{thm}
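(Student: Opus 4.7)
The plan is to mirror the proof of Theorem~\ref{H2xR}, replacing the Lie algebra $\mathfrak{a}$ by $\mathfrak{c}$: I apply the isomorphism $\psi\mapsto\psi^*$ of \eqref{id_riemanniana_rev*} to the intrinsic equation \eqref{intrinseca_RH12}, and verify that the result, satisfied by $\varphi:=\psi^*\in\Gamma(U\Sigma)$, is precisely the Gauss equation \eqref{gauss_grupos} attached to the algebra $\mathfrak{c}$ of $\R\times\mathbb{H}_1^2$. Theorem~\ref{thm main result} then yields the immersion, and the converse is obtained by running the same identifications backwards. The only new input is the explicit bivector $\Gamma(X)$ associated with $\mathfrak{c}$.

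For this, by \eqref{compg6} only $\overline{\nabla}_{e_3}e_1=-\alpha e_3$ and $\overline{\nabla}_{e_3}e_3=-\alpha e_1$ are nonzero (I write $\alpha$ for the structure constant $\delta$ of $\mathfrak{c}$, consistent with the statement of the theorem). Since $X^3=\varepsilon_3\langle X,\underline{e}_3\rangle=-\langle X,T_3\rangle$ for $X\in TM$, a direct computation together with Lemma~\ref{lemaB1} yields
\begin{equation*}
\Gamma(X)(\underline{e}_1)=\alpha\langle X,T_3\rangle\,\underline{e}_3,\quad\Gamma(X)(\underline{e}_2)=0,\quad\Gamma(X)(\underline{e}_3)=\alpha\langle X,T_3\rangle\,\underline{e}_1,
\end{equation*}
hence
\begin{equation*}
\Gamma(X)=\tfrac{1}{2}\bigl(\underline{e}_1\cdot\Gamma(X)(\underline{e}_1)-\underline{e}_3\cdot\Gamma(X)(\underline{e}_3)\bigr)=\alpha\langle X,T_3\rangle\,\underline{e}_1\cdot\underline{e}_3.
\end{equation*}
Multiplying $\underline{e}_1\cdot\underline{e}_2\cdot\underline{e}_3=\omega\cdot N$ on the right by $\underline{e}_2$, and using $\underline{e}_2^2=-1$, gives $\underline{e}_1\cdot\underline{e}_3=\omega\cdot N\cdot\underline{e}_2$; expanding $\underline{e}_2=T_2+\nu_2 N$ and applying the Clifford identities $N^2=1$, $\omega N=N\omega$, $\omega T_2=-T_2\omega$, $NT_2=-T_2N$ leads to
\begin{equation*}
\Gamma(X)=\alpha\langle X,T_3\rangle(-N\cdot T_2+\nu_2)\cdot\omega.
\end{equation*}

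It remains to transport the equation through $*$. Lemma~\ref{gama_bivector_coord_prop}(3) provides $\bigl(\tfrac{i}{2}S(X)\cdot\psi\bigr)^*=-\tfrac12\sum_j\varepsilon_je_j\cdot B(X,e_j)\cdot\psi^*$, and $(\omega\cdot\psi)^*=\omega\cdot\psi^*$ follows from applying \eqref{id_estrella_1_riem} twice (using $N^2=1$ and $NX=-XN$ for $X\in TM$); then applying $(X\cdot\chi)^*=iN\cdot X\cdot\chi^*$ to $\chi=\omega\psi$ produces
\begin{equation*}
\bigl((iT_2+\nu_2)\cdot\omega\cdot\psi\bigr)^*=(-N\cdot T_2+\nu_2)\cdot\omega\cdot\psi^*.
\end{equation*}
Combining these identities with the previous expression for $\Gamma(X)$ shows that \eqref{intrinseca_RH12} and \eqref{gauss_grupos} are equivalent, and Theorem~\ref{thm main result} concludes. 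The main obstacle is the Clifford-algebra sign bookkeeping: one must keep coherent the convention $v\cdot v=-\langle v,v\rangle$ (so that $\underline{e}_1^2=\underline{e}_2^2=-1$ but $\underline{e}_3^2=N^2=1$), the interaction of the volume element $\omega$ with both $TM$ and $N$, and the signs $\varepsilon_i$ appearing in the bivector formula of Lemma~\ref{lemaB1}; once these are fixed, the entire calculation is a routine transcription of the proof of Theorem~\ref{H2xR}.
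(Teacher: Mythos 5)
Your proof is correct and follows exactly the route the paper indicates (the paper omits the proof of Theorem~\ref{RxH12}, stating only that it is analogous to the proof of Theorem~\ref{H2xR}): you compute the bivector $\Gamma(X)=\alpha\langle X,T_3\rangle\,\underline{e}_1\cdot\underline{e}_3$ for the algebra $\mathfrak{c}$, rewrite it via $\underline{e}_1\cdot\underline{e}_3=\omega\cdot N\cdot\underline{e}_2$ as $\alpha\langle X,T_3\rangle(-N\cdot T_2+\nu_2)\cdot\omega$, and push the intrinsic equation through the identification $\psi\mapsto\psi^*$ to recover the Gauss equation \eqref{gauss_grupos}, after which Theorem~\ref{thm main result} applies. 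The Clifford sign bookkeeping (including $X^3=-\langle X,T_3\rangle$, $N^2=1$, and the anticommutation of $\omega$ with tangent vectors) is handled correctly throughout.
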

	\section{Representation of a surface in $\mathbb{S}_1^3$ and $\mathbb{H}_1^3$ }\label{sitter_antisitter}
	\subsection{Representation of a surface in $\mathbb{S}_1^3$}
	The 3-dimensional de Sitter space is the Lorentzian hypersurface  of $\R^{1,3}$ with constant sectional curvature $1$ given by the quadric
	\begin{eqnarray*}
	\mathbb{S}_1^3=\{(x_1,x_2,x_3,x_4)\in \R^{1,3}:-x_1^2+x_2^2+x_3^2+x_4^2=1\}.
	\end{eqnarray*}

	We suppose that $M$ is a simply connected Riemannian surface and $E=M\times \R$ is the trivial bundle with metric $-d\nu^2$ in the fibers. Let us assume that a symmetric bilinear form $B:TM\times TM \longrightarrow E$ is given.
	
	We consider the splitting $\R^{1,3}=\R^{1,2}\oplus \R e_4$ where $e_4$ is the last element of the standard orthonormal basis of $\R^{1,3},$ the corresponding application
	\begin{eqnarray*}
		Spin(0,2)\times Spin(1,0)&\longrightarrow& Spin(1,2)\subset Spin(1,3)\\
		(g_1,g_2)&\longmapsto& g_1 g_2,
	\end{eqnarray*}
	and the representation
	\begin{eqnarray*}
		\rho:Spin(0,2)\times Spin(1,0)&\longrightarrow& Gl(Cl_{1,3})\\
		(g_1,g_2)&\longmapsto& \begin{array}[t]{rcc}
			\rho(g_1,g_2):Cl_{1,3}&\longrightarrow &Cl_{1,3}\nonumber\\
			v &\longmapsto& g_1 g_2\cdot v.
		\end{array}
	\end{eqnarray*}
	
	For $\overset{\sim}{Q}=\overset{\sim}{Q}_M\times_M \overset{\sim}{Q}_E$ the product of spin structures, we define the bundles
	\begin{equation}\label{haz_grande_superficie}
	\Sigma:=\overset{\sim}{Q}\times_{\rho}Cl_{1,3},\hspace{.3cm}U\Sigma :=\overset{\sim}{Q}\times_{\rho}Spin{({1,3})}\subset \Sigma \hspace{.3cm}\mbox{and}\hspace{.3cm}Cl_{\Sigma}:=\overset{\sim}{Q}\times_{Ad}Cl_{1,3},
	\end{equation}
	and the element $\nu=[\overset{\sim}{s},e_4]\in Cl_{\Sigma}$ (it is well-defined, independently of the choice of $\overset{\sim}{s}\in\overset{\sim}{Q},$ since by construction $e_4$ is invariant by the action of $Spin(1,2)$). 
	\begin{thm}\label{inmersion_sitter}
		The following statements are equivalent:
		\begin{enumerate}
			\item There exists $\varphi \in \Gamma( U\Sigma) $ solution of
			\begin{equation}\label{ec_sitter}
			\nabla_X\varphi =
			-\frac{1}{2}\sum_{j=1}^{2}e_j\cdot B(X,e_j)\cdot \varphi+\frac{1}{2}X\cdot \nu \cdot \varphi
			\end{equation}
			for all $X\in TM$.
			\item There exists an isometric immersion $F$ of $M$ into $\mathbb{S}_{1}^3$ with second fundamental  form $B$.
		\end{enumerate}
		Moreover, $F:M\longrightarrow \mathbb{S}_{1}^3$ is given by \begin{equation}
		\label{explicita_siter} F=\langle \langle\nu \cdot \varphi, \varphi \rangle\rangle.
		\end{equation}
	\end{thm}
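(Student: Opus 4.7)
The strategy is to reduce the problem to the spinorial representation of surfaces in the flat ambient Minkowski space $\R^{1,3}$ via the umbilical inclusion $\mathbb{S}_1^3 \hookrightarrow \R^{1,3}$. Given an isometric immersion $F: M \to \mathbb{S}_1^3$ with normal bundle $E$ and second fundamental form $B$, post-composing with the inclusion yields an isometric immersion $\widetilde{F}: M \to \R^{1,3}$ whose codimension-two normal bundle splits as $\widetilde{E} = E \oplus \R\, F$. Since $\mathbb{S}_1^3$ is totally umbilical in $\R^{1,3}$ with the position vector $F$ as spacelike outward unit normal and shape operator $-\mathrm{Id}$, the Gauss formula $\overline{\nabla}_X Y = \widetilde{\nabla}_X Y - \langle X, Y\rangle F$ applied to the composition gives that the second fundamental form of $\widetilde{F}$ is
\[
\widetilde{B}(X, Y) = B(X, Y) - \langle X, Y\rangle F.
\]

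First I would apply the surface-in-$\R^{1,3}$ analogue of Theorem~\ref{thm main result}, whose setup of bundles and connections is precisely the one constructed in \eqref{haz_grande_superficie} (and which follows from the same arguments as in \cite{BRZ} applied to the abelian group $\R^{1,3}$, for which $\Gamma \equiv 0$): such $\widetilde{F}$ exists if and only if there is a unit section $\varphi \in \Gamma(U\Sigma)$ satisfying $\nabla_X \varphi = -\tfrac{1}{2}\sum_{j=1}^2 e_j \cdot \widetilde{B}(X, e_j)\cdot \varphi$. Substituting the expression above for $\widetilde{B}$, using $\sum_j \langle X, e_j\rangle e_j = X$, and identifying the position vector $F$ with the parallel Clifford-bundle section $\nu = [\widetilde{s}, e_4]$ (well-defined because $e_4$ is invariant under the action of $Spin(1,2)$), this equation reduces exactly to \eqref{ec_sitter}, giving the equivalence of (1) and (2).

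For the explicit formula $F = \langle\langle \nu\cdot\varphi, \varphi\rangle\rangle$, I would differentiate the right-hand side using Lemma~\ref{compatibilidad_transpuesta}. Since $e_4$ is a $Spin(1,2)$-invariant constant in the trivialization, the induced section of $Cl_{\Sigma}$ is parallel, i.e.\ $\nabla_X\nu = 0$. Writing $A(X) := -\tfrac{1}{2}\sum_j e_j \cdot B(X, e_j) + \tfrac{1}{2}X\cdot\nu$ so that $\nabla_X\varphi = A(X)\cdot\varphi$, Leibniz gives
\[
\partial_X \langle\langle \nu\cdot\varphi, \varphi\rangle\rangle = \tau(\varphi)\cdot \bigl(\nu\cdot A(X) + \tau(A(X))\cdot\nu\bigr)\cdot\varphi.
\]
The $B$-terms cancel, because $\nu$ anticommutes with each $e_j$ and with each $B(X, e_j)$, while $e_j$ anticommutes with $B(X, e_j)$; what remains is the contribution $\nu X\nu = -\nu^2 X = X$ (using $\nu^2 = -\langle\nu,\nu\rangle = -1$ and that $X$ is tangent, hence anticommutes with $\nu$). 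Thus $\partial_X \langle\langle \nu\cdot\varphi, \varphi\rangle\rangle = \langle\langle X\cdot\varphi, \varphi\rangle\rangle = \xi(X)$, so $\langle\langle \nu\cdot\varphi, \varphi\rangle\rangle$ agrees with the Darboux integral of $\xi$ up to an additive constant that can be absorbed by replacing $\varphi$ with $\varphi\cdot a$ for a suitable $a \in Spin(\g)$. Finally, $\tau(\varphi)\cdot\varphi = 1$ for $\varphi \in U\Sigma$ and $\nu^2 = -1$ give $(\tau(\varphi)\cdot\nu\cdot\varphi)^2 = \tau(\varphi)\cdot\nu^2\cdot\varphi = -1$, so $\langle F, F\rangle = 1$ and $F$ indeed takes values in $\mathbb{S}_1^3$.

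The main obstacle, I expect, lies not in any single computation but in the careful bookkeeping of the identification between the intrinsic codimension-one data on $M \hookrightarrow \mathbb{S}_1^3$ appearing in \eqref{ec_sitter} and the extrinsic codimension-two data on $M \hookrightarrow \R^{1,3}$: one has to verify that the Clifford action of $\nu$, the parallelism $\nabla\nu = 0$, and the sign conventions for the Clifford relations $v\cdot v = -\langle v, v\rangle$ are coherently tracked across the reduction so that the cancellations above actually produce $\xi(X)$ on the nose.
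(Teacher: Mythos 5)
Your argument is correct in its essential computations and follows a route that is genuinely different from the paper's. The paper proves the result directly: it first observes that $F = \mathrm{Ad}([\varphi]^{-1})([\nu])$ lies in $\mathbb{S}_1^3$ because $[\nu]$ has unit norm and $\mathrm{Ad}([\varphi]^{-1}) \in SO(1,3)$; it then cites the derivative formula $dF(X) = \langle\langle X\cdot\varphi,\varphi\rangle\rangle$ as Lemma~\ref{lemadFX_Sitter} (proved in \cite{BLR}); and finally it establishes in Proposition~\ref{isometria_haznormal_sitter}, via explicit Clifford-algebra normal-component computations, that $F$ is an isometry and that $\Phi_E$ identifies $E$ with the normal bundle and $B$ with the second fundamental form. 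You instead invoke the umbilical inclusion $\mathbb{S}_1^3 \hookrightarrow \R^{1,3}$, shift the curvature term into the modified second fundamental form $\widetilde{B}(X,Y) = B(X,Y) - \langle X,Y\rangle F$, and reduce to a codimension-two representation theorem for the flat abelian group $\R^{1,3}$, re-deriving the $dF$ formula by a Leibniz computation rather than by citation. This is a cleaner conceptual decomposition — it isolates $\tfrac12 X\cdot\nu\cdot\varphi$ as the Clifford image of the umbilicity contribution — but note that the codimension-two pseudo-Riemannian analogue of Theorem~\ref{thm main result} is nowhere stated in this paper; the key fact you need from it, namely that the bundle map $X\mapsto\langle\langle X\cdot\varphi,\varphi\rangle\rangle$ on $E\oplus\R\nu$ is an isometric identification with the normal bundle of $\widetilde F$ sending $B$ (and $\nu$) to the right data, is essentially the content of Proposition~\ref{isometria_haznormal_sitter}. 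So the reduction does not actually bypass those normal-bundle computations, it reorganizes them; what it does buy is a transparent explanation of where the $\tfrac12 X\cdot\nu$ term comes from.

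One small correction: in the abelian case $G = \R^{1,3}$, replacing $\varphi$ by $\varphi\cdot a$ with $a\in\mathrm{Spin}(1,3)$ sends $\xi$ to $\mathrm{Ad}(a^{-1})\circ\xi$ and hence \emph{rotates} the Darboux integral of $\xi$, it does not translate it; the additive constant of integration is an independent degree of freedom. This is harmless in your argument, because the specific primitive $F := \langle\langle\nu\cdot\varphi,\varphi\rangle\rangle$ you wrote down already satisfies both $dF = \xi$ and $\langle F,F\rangle = 1$ (by your computation with $\nu^2 = -1$ and $\tau(\varphi)\varphi = 1$), so no normalization of the constant is needed to force $F$ into the quadric.
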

	\begin{proof}
		We first prove $1\implies 2.$ Let us suppose that $\varphi \in \Gamma(U \Sigma) $ is a solution of \eqref{ec_sitter} and observe that $F$ defined by \eqref{explicita_siter} takes values in $\mathbb{S}_{1}^3$: by definition, $F=\tau[\varphi][\nu][\varphi]$ and since $[\varphi]\in Spin(1,3)$ and $[\nu]\in \R^{1,3}$ we have that $F=Ad([\varphi]^{-1})([\nu])$. Since $Ad([\varphi]^{-1})\in SO(1,3)$ and $[\nu]$ has norm $1$ we conclude that $Ad([\varphi]^{-1})([\nu])\in \mathbb{S}_{1}^3$. The rest of the proof rely on the following results, the proof of the first one can be found in \cite[Lemma 8.1]{BLR} and we omit it here: 
		\begin{lem}\label{lemadFX_Sitter}
			If $\varphi \in \Gamma(U\Sigma) $ satisfies equation \eqref{ec_sitter} and $F:M\longrightarrow \mathbb{S}_1^3$ is defined by \eqref{explicita_siter} then for all $X\in TM$
			\begin{eqnarray*}
				dF(X)=\langle \langle X \cdot \varphi, \varphi \rangle\rangle.
			\end{eqnarray*}
		\end{lem}
		\begin{prop}\label{isometria_haznormal_sitter}The following statements are true:
			\begin{enumerate}
				\item $F:M\longrightarrow \mathbb{S}_1^3$ is an isometry.
				\item The application
				\begin{eqnarray*}
					\Phi_E:\ 	E&\longrightarrow &T\mathbb{S}_1^3\\
					\ X\in E_m&\longmapsto&(F(m),\langle\langle X\cdot \varphi, \varphi \rangle\rangle)
				\end{eqnarray*}
				is an isomorphism between $E$ and the normal bundle of the immersion $F$ which preserves the metrics and identifies $B$ with the second fundamental form of $F$.  
			\end{enumerate}
		\end{prop}
		\begin{proof}
			For simplicity let us set $\xi(X):=\langle\langle X\cdot \varphi, \varphi \rangle\rangle$ for all $X\in TM\oplus E$, then, for all $X, Y\in TM\oplus E$
			\begin{eqnarray*}
				\langle \xi(X), \xi(Y) \rangle&=&- \frac{1}{2}(\xi(X)\xi(Y)+\xi(Y)\xi(X)) \label{isometria}
				\\
				&=& -\frac{1}{2}(\tau[\varphi][X][\varphi]\tau[\varphi][Y][\varphi]+\tau[\varphi][Y][\varphi]\tau[\varphi][X][\varphi]) \nonumber\\
				&=&-\frac{1}{2}\tau[\varphi]([X][Y]+[Y][X])[\varphi] \nonumber\\
				&=&\tau[\varphi]\langle X, Y \rangle[\varphi]\nonumber\\
				&=&\langle X, Y \rangle, \nonumber
			\end{eqnarray*}
			which proves that $F$ and $\Phi_E$ are isometries. Let us see now that $B$ is the second fundamental form of the immersion, in other words, if $NM$ is the normal bundle of $F$ and $B^F:TM\times TM\longrightarrow NM$ is its second fundamental, then
			\begin{eqnarray*}
				B^F(X,Y)=\langle \langle B(X,Y)\cdot \varphi, \varphi \rangle \rangle.
			\end{eqnarray*}
			Let $X,Y$ be sections of $TM$ such that $\nabla X=0=\nabla Y$ at a point; by definition
			\begin{equation*} 
			B^F(X,Y)=\{\partial_X(dF(Y))\}^N=(\partial_X\langle \langle Y\cdot \varphi, \varphi \rangle \rangle)^N.
			\end{equation*} 
			Since $\varphi\in\Gamma( U\Sigma )$ is a solution of \eqref{ec_sitter} it follows that
			\begin{eqnarray}
			\partial_X\langle\langle Y\cdot\varphi, \varphi \rangle\rangle
			&=&\langle\langle Y\cdot\nabla _X \varphi, \varphi \rangle\rangle+\langle\langle Y\cdot \varphi, \nabla _X \varphi \rangle\rangle \nonumber\\
			&=&(id+\tau)\langle\langle Y\cdot\nabla _X \varphi, \varphi \rangle\rangle \nonumber
			\\
			&=&-\frac{1}{2}(id+\tau)\langle\langle Y\cdot\sum_{j=1}^{2}e_j\cdot B(X,e_j)\cdot  \varphi, \varphi \rangle\rangle \nonumber\\
			& & +\frac{1}{2}(id+\tau)\langle\langle Y\cdot X \cdot \nu \cdot  \varphi, \varphi \rangle\rangle \label{nuXnu}.
			\end{eqnarray}
			We first prove the following equality
			\begin{equation}\label{YporB}
			-\frac{1}{2}(id+\tau)\langle\langle Y\cdot\sum_{j=1}^{2}e_j\cdot B(X,e_j)\cdot  \varphi, \varphi \rangle\rangle=\langle\langle B(X,Y)\cdot  \varphi, \varphi \rangle\rangle.
			\end{equation}
		If $X=\sum_{i=1}^{2}x_ie_i$ and $Y=\sum_{k=1}^{2}y_ke_k$, then
			\begin{align*}
			Y\cdot\sum_{j=1}^{2}  e_j\cdot B(X,e_j)&=\sum_{k=1}^{2} \sum_{j=1}^{2}  y_k e_k\cdot e_j\cdot  B(X,e_j)\\
			&=-\sum_{j}   y_j B(X,e_j)+\sum_{k\neq j} y_k e_k\cdot e_j\cdot B(X,e_j)
			\\
			&=-B(X,Y)+\sum_{k\neq j} y_k  e_k\cdot  e_j \cdot B(X,e_j),
			\end{align*}
			therefore,
			\begin{align*}
			\langle\langle Y\cdot\sum_{j=1}^{2} e_j\cdot B(X,e_j)\cdot  \varphi, \varphi \rangle\rangle&=\langle\langle -B(X,Y)\cdot \varphi,\varphi \rangle \rangle\\
			&+\sum_{k\neq j} \langle\langle y_k e_k\cdot  e_j\cdot B(X,e_j) \cdot \varphi, \varphi \rangle \rangle
			\end{align*}
			and
			\begin{align*}
			\tau
			\langle\langle Y\cdot\sum_{j=1}^{2} e_j\cdot B(X,e_j)\cdot  \varphi, \varphi \rangle\rangle&=\langle\langle -B(X,Y)\cdot \varphi,\varphi \rangle \rangle\\
			&-\sum_{k\neq j} \langle\langle y_k e_k\cdot e_j \cdot B(X,e_j)\cdot \varphi, \varphi \rangle \rangle,\end{align*}
			which implies \eqref{YporB}. Using \eqref{nuXnu} and \eqref{YporB}, it is enough to finish the proof to verify that $\frac{1}{2}(id+\tau)\langle\langle Y\cdot X \cdot \nu \cdot  \varphi, \varphi \rangle\rangle$ does not have normal component. We have that
			\begin{eqnarray*}
				\langle\langle Y\cdot X \cdot \nu \cdot  \varphi, \varphi \rangle\rangle&=&\sum_{i\neq k}y_i x_k\tau [\varphi][e_i]\cdot [e_k]\cdot[\nu]\cdot [\varphi]-\sum_{k}y_k x_k\tau [\varphi]\cdot [\nu][\varphi]
			\end{eqnarray*}
			and therefore
			\begin{eqnarray*}	\tau\langle\langle Y\cdot X \cdot \nu \cdot  \varphi, \varphi \rangle\rangle&=&-\sum_{i\neq k}y_i x_k\tau [\varphi][e_i]\cdot [e_k]\cdot[\nu]\cdot [\varphi]-\sum_{k}y_k x_k\tau [\varphi]\cdot [\nu][\varphi],
			\end{eqnarray*}
			which implies
			\begin{eqnarray*}
				\frac{1}{2}(id+\tau)\langle\langle Y\cdot X \cdot \nu \cdot  \varphi, \varphi)\rangle\rangle &=&-\sum_{k}y_k x_k\tau [\varphi]\cdot [\nu]\cdot[\varphi], \end{eqnarray*}
			which has no component normal to the immersion because for $N$ a section of $E$, $[N]$ and $[\nu]=e_4$ are orthogonal. 
		\end{proof}

		We finally prove $2\implies 1.$ Suppose that $M$ is isometrically immersed in $\mathbb{S}_1^3\subset \R^{1,3}$ and consider the section $\varphi\in \Sigma \R^{1,3}\vert_{M}$ given by $\varphi=[1_{Spin(1,3)}, 1_{Cl_{{\R^{^{1,3}}}}}]$; it belongs to $U\Sigma$ and using the spinorial Gauss formula we deduce that it satisfies \eqref{ec_sitter}. 
	\end{proof}

	\subsection{Representation of a surface in $\mathbb{H}_1^3$}
	The pseudo-hyperbolic space of dimension $3$ is the Lorentzian hypersurface of $\R^{2,2}$ with constant sectional curvature $-1$ given by the quadric
	\begin{equation*}
	\mathbb{H}_1^{3}=\{(x_1,x_2,x_3,x_4)\in \R^{2,2} : -x_1^2-x_2^2+x_3^2+x_4^2=-1\}.
	\end{equation*}  
We suppose that $M$ is a simply connected Riemannian surface and $E:=M\times \R$ is the trivial bundle with metric $-d\nu^2$ in the fibers. Let us assume that $B:TM\times TM \longrightarrow E$ is a given symmetric bilinear form.
	
	We consider the splitting $\R^{2,2}=\R e_1 \oplus \R^{1,2}$ where $e_1$ is the first vector of the standard orthonormal basis of $\R^{2,2}$ so that $\langle e_1,e_1\rangle =-1$, the application
	\begin{eqnarray*}
		Spin(0,2)\times Spin(1,0)&\longrightarrow& Spin(1,2)\subset Spin(2,2)\\
		(g_1,g_2)&\longmapsto& g_1 g_2
	\end{eqnarray*}
	where the inclusion $Spin(1,2)\subset Spin(2,2)$ corresponds to the inclusion $\R^{1,2}\subset \R^{2,2}$ in the splitting above, and the representation
	\begin{eqnarray*}
		\rho:\hspace{.5cm}Spin(0,2)\times Spin(1,0)&\longrightarrow& Gl(Cl_{2,2})\\
		(g_1,g_2)&\longmapsto& \begin{array}[t]{rcc}
			\rho(g_1,g_2):Cl_{2,2}&\longrightarrow &Cl_{2,2}\nonumber\\
			v &\longmapsto& g_1 g_2\cdot v.
		\end{array}
	\end{eqnarray*}
	Denoting by $\overset{\sim}{Q}$ a product of spin structures of $TM$ and $E,$ we define the bundles
	\begin{equation*}
	\Sigma:=\overset{\sim}{Q}\times_{\rho}Cl_{2,2},\hspace{.5cm}U\Sigma :=\overset{\sim}{Q}\times_{\rho}Spin{({2,2})}\subset \Sigma \hspace{.3cm}  \text{and}\hspace{.3cm} Cl_{\Sigma}:={\overset{\sim}{Q}\times_{Ad}Cl_{2,2}}
	\end{equation*}
	and set $\nu:=[\overset{\sim}{s},e_1]\in Cl_{\Sigma}$. The proof of the following theorem is similar to the proof of Theorem \ref{inmersion_sitter} and we omit it.
	\begin{thm}\label{inmersion_anti_sitter}
		The following statements are equivalent:
		\begin{enumerate}
			\item There exists $\varphi \in \Gamma(U\Sigma) $ such that
			\begin{equation}\label{ec_anti_sitter}
			\nabla_X\varphi =
			-\frac{1}{2}\sum_{j=1}^{2}e_j\cdot B(X,e_j)\cdot \varphi-\frac{1}{2}X\cdot \nu \cdot \varphi,
			\end{equation}
			for all $X\in TM$.
			\item There exists an isometric immersion $F$ of $M$ into $\mathbb{H}_{1}^3$ with second fundamental form $B$.
		\end{enumerate}
		Moreover $F:M\longrightarrow \mathbb{H}_{1}^3 $ is given by $F=\langle \langle\nu \cdot \varphi, \varphi \rangle\rangle.$
	\end{thm}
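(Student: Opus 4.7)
The plan is to imitate the proof of Theorem \ref{inmersion_sitter} \emph{mutatis mutandis}, the only new feature being the bookkeeping of the sign flip coming from the splitting $\R^{2,2}=\R e_1\oplus\R^{1,2}$ where the distinguished vector $e_1$ is now timelike, so that $[\nu]^2=-\langle e_1,e_1\rangle=+1$ in $Cl_{2,2}$ (as opposed to $[\nu]^2=-1$ in the de Sitter setting).

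First I would verify that $F=\langle\langle\nu\cdot\varphi,\varphi\rangle\rangle$ actually lands in $\mathbb{H}_1^3$: since $F=\tau[\varphi][\nu][\varphi]=Ad([\varphi]^{-1})([\nu])$ and $Ad([\varphi]^{-1})\in SO(2,2)$ preserves the quadratic form, $\langle F,F\rangle=\langle[\nu],[\nu]\rangle=-1$, i.e.\ $F\in\mathbb{H}_1^3$. Next, for the analogue of Lemma \ref{lemadFX_Sitter}, I would expand
\[
\partial_X F=(\mathrm{id}+\tau)\langle\langle\nu\cdot\nabla_X\varphi,\varphi\rangle\rangle
\]
and substitute \eqref{ec_anti_sitter}. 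In the first piece, $\nu$ anticommutes with $e_j$ and with $B(X,e_j)$ (both of which live in the $\R^{1,2}$-factor), so $\nu\cdot e_j\cdot B(X,e_j)=e_j\cdot B(X,e_j)\cdot\nu$. In the second piece, $\nu\cdot X\cdot\nu=-X\cdot\nu^2=-X$ because $\nu^2=+1$. Combined with the $-\tfrac{1}{2}$ sitting in front of $X\cdot\nu$ in \eqref{ec_anti_sitter}, this yields exactly $+\tfrac{1}{2}X\cdot\varphi$, matching the corresponding step in the de Sitter proof. Thus $dF(X)=\langle\langle X\cdot\varphi,\varphi\rangle\rangle$ for $X\in TM$, and the sign adjustment in \eqref{ec_anti_sitter} is precisely the one that makes this work.

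With this key lemma in hand, the analogue of Proposition \ref{isometria_haznormal_sitter} transfers verbatim. The isometry computation
\[
\langle\xi(X),\xi(Y)\rangle=-\tfrac{1}{2}\tau[\varphi]\bigl([X][Y]+[Y][X]\bigr)[\varphi]=\langle X,Y\rangle
\]
is purely algebraic in $Cl_{2,2}$ and does not depend on the signature. The identification of the second fundamental form proceeds as before: one computes $\partial_X\langle\langle Y\cdot\varphi,\varphi\rangle\rangle$ at a point where $\nabla X=\nabla Y=0$, uses \eqref{ec_anti_sitter}, and finds a $B$-part giving $\langle\langle B(X,Y)\cdot\varphi,\varphi\rangle\rangle$ plus a remainder $-\sum_k y_kx_k\,\tau[\varphi]\cdot[\nu]\cdot[\varphi]$. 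The latter lies in the $[\nu]$-direction of $\R^{2,2}$, i.e.\ the ambient normal to $\mathbb{H}_1^3$, so it is orthogonal to any $[N]\in\R^{1,2}$ and drops out of the normal bundle of $F$ inside $\mathbb{H}_1^3$.

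For the converse direction, I would take an isometric immersion $F\colon M\to\mathbb{H}_1^3\subset\R^{2,2}$, consider the constant section $\varphi=[1_{Spin(2,2)},1]\in\Gamma(U\Sigma)$, and apply the spinorial Gauss formula in $\R^{2,2}$. Because $\langle\nu,\nu\rangle=-1$ for the outward normal $\nu(p)=p$ of $\mathbb{H}_1^3\subset\R^{2,2}$, the second fundamental form of the embedding $\mathbb{H}_1^3\subset\R^{2,2}$ is $\langle X,Y\rangle$ (not $-\langle X,Y\rangle$ as in the de Sitter case), and the extra term that appears when one splits off the $\mathbb{H}_1^3$-contribution from the total second fundamental form precisely supplies the $-\tfrac{1}{2}X\cdot\nu\cdot\varphi$ of \eqref{ec_anti_sitter}. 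I do not expect any genuine obstacle in this proof; the only care needed is the consistent tracking of the two signs (one in $\nu^2$ and one in the embedding's second fundamental form) that are flipped relative to Theorem \ref{inmersion_sitter}, and that these two flips are precisely compensated by the sign change from $+$ to $-$ in front of $\tfrac{1}{2}X\cdot\nu\cdot\varphi$.
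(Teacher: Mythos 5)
Your proposal is correct and follows exactly the strategy the paper itself indicates (the author writes that the proof of Theorem \ref{inmersion_anti_sitter} is similar to that of Theorem \ref{inmersion_sitter} and omits it). The sign bookkeeping is the genuine content and you carry it out correctly: $[\nu]^2=-\langle e_1,e_1\rangle=+1$ in $Cl_{2,2}$, so $\nu\cdot X\cdot\nu=-X$, and the $-\tfrac{1}{2}$ in front of $X\cdot\nu\cdot\varphi$ in \eqref{ec_anti_sitter} then produces $+\tfrac{1}{2}X\cdot\varphi$ just as in the de Sitter case; moreover the second fundamental form of $\mathbb{H}_1^3\subset\R^{2,2}$ is $B(X,Y)=\langle X,Y\rangle\nu$ (since $\langle\nu,\nu\rangle=-1$), which is the opposite sign from $\mathbb{S}_1^3\subset\R^{1,3}$, and the spinorial Gauss formula for the constant section then gives precisely \eqref{ec_anti_sitter} in the converse direction. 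The remaining steps (the isometry computation and the identification of $B$ with the second fundamental form of $F$, in particular the vanishing of the residual $-\sum_k y_kx_k\,\tau[\varphi][\nu][\varphi]$ term because it is orthogonal to the $E$-direction) are signature-independent and transfer verbatim, as you note.
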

	\section{Representation of a surface in $\mathbb{R}_{-}\times \mathbb{S}^2$ }\label{product_nogrupo}
	
	The product manifold $ \R_{-} \times \mathbb{S}^2 $ is isometrically immersed in $\R^{1,3}$ as the following quadric
	\begin{equation*}
	\R_{-} \times \mathbb{S}^2=\{(t,x_1,x_2,x_3)\ :\ x_1^2+x_2^2+x_3^2=1\}\subset \R^{1,3}.
	\end{equation*}

	If $M$ is a surface isometrically immersed in $\R_{-}\times \mathbb{S}^2\subset \R^{1,3}$ then, by the Gauss formula, the restriction to $M$ of a constant spinor field $\varphi$ satisfies
	\begin{equation}\label{gauss_RS2}
	\nabla_X\varphi = -\frac{1}{2}\sum_{j=1}^{2}e_j\cdot B(X,e_j)\cdot \varphi+\frac{1}{2}X_0\cdot \nu \cdot \varphi,
	\end{equation}
	for all $X\in TM$, where $B$ is the second fundamental form of the immersion $M\hookrightarrow\R_{-} \times \mathbb{S}^2$, $\nu$ is the outward normal vector field of $\R_{-} \times \mathbb{S}^2$ in $\R^{1,3}$ and $X_0$ is the projection of $X$ onto $T\mathbb{S}^2$.
	Suppose that $(M,g)$ is a simply connected Riemannian surface and $E=M\times \R$ is the trivial bundle with the metric $-d\nu^2$ in the fibers.
	
We consider the splitting $\R^{1,3}=\R^{1,2}\oplus  \R e_4$, where $e_4$ is the last vector of the standard orthonormal basis of $\R^{1,3}$, the application
	\begin{eqnarray*}
		Spin(0,2)\times Spin(1,0)&\longrightarrow& Spin(1,2)\subset Spin(1,3)\\
		(g_1,g_2)&\longmapsto& g_1 g_2
	\end{eqnarray*}
	and the representation
	$$\begin{array}{ccl}
	\rho:\hspace{.5cm}Spin(0,2)\times Spin(1,0)&\longrightarrow  &Gl(Cl_{1,3})\\
	(g_1,g_2)&\longmapsto& \rho(g_1,g_2):\begin{array}[t]{ccc}
	Cl_{1,3}&\longrightarrow &Cl_{1,3}\\
	v&\longmapsto& g_1g_2v.
	\end{array}
	\end{array} $$
	If $\overset{\sim}{Q}$ stands for a product of spin structures of $TM$ and $E,$ we define the bundles
	\begin{equation*}\label{haz_grande_superficie_RS2}
	\Sigma:=\overset{\sim}{Q}\times_{\rho}Cl_{1,3},\hspace{.5cm}U\Sigma :=\overset{\sim}{Q}\times_{\rho}Spin{({1,3})}\subset \Sigma,\hspace{.5cm}Cl_{\Sigma}:=\overset{\sim}{Q}\times_{Ad}Cl_{1,3}
	\end{equation*}
	and set $\nu:=[\overset{\sim}{s},e_4]\in Cl_{\Sigma}.$

	We consider a symmetric bilinear form $B:TM\times TM\longrightarrow E$ and denote by $S:TM\longrightarrow TM $ the symmetric operator such that $\langle S(X),Y\rangle = \langle B(X,Y),N\rangle$ for all $X,Y\in TM$ where $N$ is a given unit section of $E$. Let us suppose that there exist $T\in \Gamma(TM)$ and $f\in C^{\infty}(M)$ such that
	\begin{eqnarray}\label{compatibilidad_prod}
	\vert \vert T\vert \vert^2 - f^2=-1\label{norma_vect_killing}\\
	\nabla_{X}T=f S(X)\label{compatibilidad_prod_2}
	\\
	df(X)=\langle S(X),T\rangle. \label{compatibilidad_prod_3}
	\end{eqnarray} 
	
	It follows from \eqref{norma_vect_killing} that $e_0:=T+fN$ has norm $-1$.
	
	\begin{lem}\label{eta}
		If $M$ is simply connected and $T\in \Gamma(TM)$ satisfies the equations \eqref{norma_vect_killing}--\eqref{compatibilidad_prod_3} then there exists $\eta: M\longrightarrow \R$ such that $d\eta(X)=-\langle X,T\rangle$.
	\end{lem}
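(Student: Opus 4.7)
The plan is to show that the $1$-form $\omega$ on $M$ defined by $\omega(X):=-\langle X,T\rangle$ is closed; since $M$ is simply connected, $\omega$ will then be exact, and any primitive $\eta$ of $\omega$ gives the desired function.

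To verify closedness I would compute $d\omega(X,Y)$ for arbitrary vector fields $X,Y\in\Gamma(TM)$ using the standard formula
\begin{equation*}
d\omega(X,Y)=X\bigl(\omega(Y)\bigr)-Y\bigl(\omega(X)\bigr)-\omega([X,Y]).
\end{equation*}
Expanding each derivative by compatibility of $\nabla$ with the metric and replacing the Lie bracket by $[X,Y]=\nabla_XY-\nabla_YX$ (torsion-free), the terms involving $\nabla_XY$ and $\nabla_YX$ cancel and one is left with
\begin{equation*}
d\omega(X,Y)=-\langle Y,\nabla_XT\rangle+\langle X,\nabla_YT\rangle.
\end{equation*}
Now I would substitute the compatibility condition $\nabla_XT=fS(X)$ from \eqref{compatibilidad_prod_2}, which yields
\begin{equation*}
d\omega(X,Y)=f\bigl(-\langle Y,S(X)\rangle+\langle X,S(Y)\rangle\bigr)=0
\end{equation*}
because $S$ is symmetric. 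Hence $d\omega=0$.

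Since $M$ is simply connected, by the Poincar\'e lemma there exists a smooth $\eta:M\to\mathbb{R}$ with $d\eta=\omega$, i.e.\ $d\eta(X)=-\langle X,T\rangle$ for every $X\in TM$. I do not expect any real obstacle here: the argument is a direct check, and the only ingredient besides simple connectedness is the symmetry of $S$, which is built into the setup; the third compatibility equation \eqref{compatibilidad_prod_3} is not even needed for this particular statement.
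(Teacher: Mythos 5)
Your proposal is correct and takes essentially the same approach as the paper: both define the $1$-form $X\mapsto\pm\langle X,T\rangle$, verify its closedness via the Cartan formula for $d$, metric-compatibility of $\nabla$, the condition $\nabla_X T=fS(X)$, and the symmetry of $S$, and then invoke simple connectedness. Your observation that \eqref{compatibilidad_prod_3} is not needed is also accurate.
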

	\begin{proof}
		The latter holds since, for all $X,Y\in \Gamma(TM)$,
		\begin{eqnarray*}
			d\beta(X,Y)&=& X.\beta(Y)- Y.\beta(X)-\beta([X,Y])\\
			&=&\langle  \nabla_X Y,T\rangle +\langle  Y,\nabla_X T\rangle-\langle  \nabla_Y X,T\rangle -\langle X,\nabla_ YT\rangle- \langle [X,Y], T \rangle\\
			&=&\langle  Y, fS(X)\rangle-\langle X,fS(Y)\rangle\\
			&=&0.
		\end{eqnarray*}
	\end{proof}
	\begin{thm}\label{thm_inmersion_rs2} If $M$ is simply connected and $\eta:M\longrightarrow \R$ is the differentiable function of Lemma \ref{eta} then the following statements are equivalent:
		\begin{enumerate}
			\item There exists $\varphi \in \Gamma(U\Sigma) $ such that
			\begin{equation}\label{gauss_producto_RS2}
			\nabla_X \varphi=-\frac{1}{2}\sum_{j=1}^2e_j\cdot B(X,e_j)\cdot \varphi +\frac{1}{2}X_0\cdot \nu \cdot \varphi,
			\end{equation}
			for all $X\in TM$ and $X_0=X+\langle X,e_0\rangle e_0$.
			\item There exists an isometric immersion $F$ of $M$ into $\R_{-} \times \mathbb{S}^2$ with second fundamental form $B$.
		\end{enumerate}
		Moreover,  $F:M\longrightarrow\R_{-} \times \mathbb{S}^2$ is given by 
		\begin{equation}\label{prod_nogrupo}
		F=\eta\langle \langle e_0 \cdot \varphi,\varphi\rangle \rangle + \langle \langle \nu\cdot \varphi,\varphi\rangle \rangle.
		\end{equation}
	\end{thm}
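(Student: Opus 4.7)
My plan is to adapt the argument from Theorem \ref{inmersion_sitter} to the product case, with the main new feature being the Killing-direction correction $\eta\langle\langle e_0\cdot\varphi,\varphi\rangle\rangle$ appearing in the formula \eqref{prod_nogrupo}; this correction is necessary because $\R_{-}\times\mathbb{S}^2$, unlike $\mathbb{S}^3_1$, is a product rather than a space of constant curvature, so the ``radial'' piece $\langle\langle\nu\cdot\varphi,\varphi\rangle\rangle$ alone does not sweep out the entire quadric.

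For the direction $(2)\Rightarrow(1)$, the plan is to restrict the parallel spinor $\varphi=[1_{Spin(1,3)},1_{Cl_{1,3}}]$ of $\R^{1,3}$ to $M$ and apply the spinorial Gauss formula for the chain of immersions $M\hookrightarrow\R_{-}\times\mathbb{S}^2\hookrightarrow\R^{1,3}$. The total second fundamental form of $M$ in $\R^{1,3}$ decomposes as $\widetilde B=B+II$, where $B$ takes values in $E$ and $II(X,Y)=-\langle X_0,Y_0\rangle\nu$ is the extrinsic curvature of the quadric $\R_{-}\times\mathbb{S}^2\subset\R^{1,3}$; after Clifford-algebraic simplification this recovers precisely \eqref{gauss_RS2}, i.e.\ \eqref{gauss_producto_RS2}.

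For the direction $(1)\Rightarrow(2)$, given $\varphi\in\Gamma(U\Sigma)$ satisfying \eqref{gauss_producto_RS2}, I would proceed in three steps. \textbf{Step 1:} establish the key identity $dF(X)=\langle\langle X\cdot\varphi,\varphi\rangle\rangle$ for all $X\in TM$. Differentiating \eqref{prod_nogrupo} and using Lemma \ref{compatibilidad_transpuesta} (compatibility of $\nabla$ with $\langle\langle\cdot,\cdot\rangle\rangle$), Lemma \ref{eta} ($d\eta=-\langle\cdot,T\rangle$), the covariant derivative $\nabla_X e_0=fS(X)+\langle S(X),T\rangle N$ coming from the compatibility conditions \eqref{compatibilidad_prod_2}--\eqref{compatibilidad_prod_3}, and the anticommutation of $\nu$ with every vector of $TM\oplus E$, one applies the Clifford identity $(id+\tau)(a\cdot b\cdot c)=2(\langle a,b\rangle c-\langle a,c\rangle b+\langle b,c\rangle a)$ for vectors to symmetrize the $B$- and $X_0\cdot\nu$-contributions of \eqref{gauss_producto_RS2}; the cancellations hinge crucially on the orthogonality $\langle X_0,e_0\rangle=0$ (which follows from $\langle e_0,e_0\rangle=-1$) and on the compatibility conditions. \textbf{Step 2:} show that $F$ takes values in $\R_{-}\times\mathbb{S}^2$: using Step 1, differentiate the function $|F_{\R^3}|^2_{\R^3}-1$, which measures the failure of $F$ to land on the quadric, and check that its derivative is identically zero; after fixing the gauge by replacing $\varphi$ with $\varphi\cdot a$ for some $a\in Spin(1,3)$ so the initial value is $0$, one concludes by connectedness of $M$. \textbf{Step 3:} argue as in Proposition \ref{isometria_haznormal_sitter} that $F$ is an isometric immersion with the prescribed normal bundle and second fundamental form: the identity from Step 1 together with $Spin$-equivariance of $\langle\langle\cdot,\cdot\rangle\rangle$ yields isometry, the map $Z\mapsto\langle\langle Z\cdot\varphi,\varphi\rangle\rangle$ identifies $E$ with the normal bundle of $F$ inside $\R_{-}\times\mathbb{S}^2$, and a $(id+\tau)$-symmetrization of the $B$-term recovers $B$ as the second fundamental form of $F$ (the term $\frac{1}{2}X_0\cdot\nu\cdot\varphi$ contributes only along $\nu$, i.e.\ normal to $\R_{-}\times\mathbb{S}^2$ inside $\R^{1,3}$, and therefore drops out of the normal component taken in $\R_{-}\times\mathbb{S}^2$).

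The principal obstacle is Step 1: the three sources of variation in \eqref{prod_nogrupo}---the derivative of $\eta$, the direct-sum covariant derivative $\nabla_X e_0$, and the $(id+\tau)$-symmetrization of the spinor equation applied to $\partial_X\langle\langle\nu\cdot\varphi,\varphi\rangle\rangle$ and $\partial_X\langle\langle e_0\cdot\varphi,\varphi\rangle\rangle$---each produce several Clifford terms, and their cancellation down to the clean expression $\langle\langle X\cdot\varphi,\varphi\rangle\rangle$ is the technical heart of the argument. Once this identification is in place, Step 2 is a short differentiation argument and Step 3 is structurally identical to the corresponding step in Proposition \ref{isometria_haznormal_sitter}.
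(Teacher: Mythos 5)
Your proposal is correct and essentially mirrors the paper's proof: the direction $(2)\Rightarrow(1)$ is the spinorial Gauss formula applied to the restricted parallel spinor, and the direction $(1)\Rightarrow(2)$ goes through the key identity $dF(X)=\langle\langle X\cdot\varphi,\varphi\rangle\rangle$ (the paper's Lemma \ref{lema_inmersion_rs2}) followed by the argument of Proposition \ref{isometria_haznormal_sitter}. The one minor divergence is in your Step 2. The paper first isolates (Lemma \ref{e0_constante}) the identity $\partial_X\langle\langle e_0\cdot\varphi,\varphi\rangle\rangle=0$, so that after a rigid motion $\langle\langle e_0\cdot\varphi,\varphi\rangle\rangle$ is the constant time direction $e_1$; it then observes \emph{pointwise} that $\langle\langle\nu\cdot\varphi,\varphi\rangle\rangle=Ad([\varphi]^{-1})([\nu])$ has norm $1$ because $[\varphi]\in Spin(1,3)$, and is orthogonal to $e_1$ because $\nu\perp e_0$. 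No integration is needed. You instead propose to differentiate $|F_{\R^3}|^2-1$ and invoke connectedness. This works, but it is slightly redundant: to even define the spatial part $F_{\R^3}$ you must already know that $\langle\langle e_0\cdot\varphi,\varphi\rangle\rangle$ is constant (which you implicitly establish inside your Step 1 via the computation of $\nabla_X e_0$), and once you know this, the paper's pointwise norm-preservation argument gives the conclusion immediately without the extra gauge-fixing and integration step. I would recommend extracting the constancy of $\langle\langle e_0\cdot\varphi,\varphi\rangle\rangle$ as a separate lemma, as the paper does, since it is used both to make the middle term of $dF$ vanish and to identify the time axis; this makes the structure of the argument clearer and replaces your Step 2 with the shorter pointwise check.
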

For the proof we will need the following lemma:		
\begin{lem}\label{e0_constante}
If $\varphi\in \Gamma(U\Sigma)$ is a solution of (\ref{gauss_producto_RS2}), then, for all $X\in TM,$
 $$\partial_ X \langle \langle e_0\cdot \varphi,\varphi\rangle \rangle=0.$$
\end{lem}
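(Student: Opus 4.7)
\noindent\emph{Proof plan.}
The plan is to differentiate $\langle\langle e_0\cdot\varphi,\varphi\rangle\rangle$ using Lemma \ref{compatibilidad_transpuesta}, substitute the spinorial equation \eqref{gauss_producto_RS2} in the resulting expression, and reduce everything to a Clifford-algebra identity that is forced by the compatibility hypotheses \eqref{norma_vect_killing}--\eqref{compatibilidad_prod_3}.

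Concretely, I would first expand
$$\partial_X\langle\langle e_0\cdot\varphi,\varphi\rangle\rangle = \langle\langle(\nabla_X e_0)\cdot\varphi,\varphi\rangle\rangle + \langle\langle e_0\cdot\nabla_X\varphi,\varphi\rangle\rangle + \langle\langle e_0\cdot\varphi,\nabla_X\varphi\rangle\rangle,$$
and compute $\nabla_X e_0 = f\,S(X) + \langle S(X),T\rangle N$ directly from $e_0 = T + fN$ together with \eqref{compatibilidad_prod_2}--\eqref{compatibilidad_prod_3} (using also $\nabla^E N = 0$, since $E$ is trivial).

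Next, I rewrite the right-hand side of \eqref{gauss_producto_RS2} as $\nabla_X\varphi = (\alpha+\gamma)\cdot\varphi$, where $\alpha$ comes from the second-fundamental-form contribution (a short manipulation using $\langle N,N\rangle=-1$ yields $\alpha = \tfrac{1}{2}S(X)\cdot N$) and $\gamma = \tfrac{1}{2}X_0\cdot\nu$. Since $S(X)\perp N$ and $X_0\perp\nu$, both $\alpha$ and $\gamma$ are bivectors of orthogonal vectors, so $\tau(\alpha)=-\alpha$ and $\tau(\gamma)=-\gamma$. Using the (easily verified) extension of \eqref{tau_2} to arbitrary Clifford elements, namely $\langle\langle\xi\cdot\varphi,\psi\rangle\rangle = \langle\langle\varphi,\tau(\xi)\cdot\psi\rangle\rangle$, the two spinorial terms combine into a commutator:
$$\langle\langle e_0\cdot\nabla_X\varphi,\varphi\rangle\rangle + \langle\langle e_0\cdot\varphi,\nabla_X\varphi\rangle\rangle = \langle\langle [e_0,\,\alpha+\gamma]\cdot\varphi,\varphi\rangle\rangle.$$
Since $\varphi\in\Gamma(U\Sigma)$, the coordinate $[\varphi]$ is invertible, so the map $\xi\mapsto\langle\langle\xi\cdot\varphi,\varphi\rangle\rangle$ is injective; the lemma therefore reduces to the purely algebraic identity $\nabla_X e_0 + [e_0,\alpha+\gamma] = 0$ in $Cl_{\Sigma}$.

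The last step is a direct Clifford computation. Using $N^2 = 1$, the anticommutation of orthogonal vectors, and $T\cdot u + u\cdot T = -2\langle T,u\rangle$ for $u = S(X)\in TM$, I expect $[e_0,\alpha] = -f\,S(X) - \langle S(X),T\rangle N$, which exactly cancels $\nabla_X e_0$. For the other piece, $\nu$ anticommutes with $e_0$ (since $\nu\perp TM\oplus E$), giving $[e_0,\gamma] = -\langle e_0,X_0\rangle\,\nu$; and the definition $X_0 = X + \langle X,e_0\rangle e_0$ together with $\langle e_0,e_0\rangle = \|T\|^2 - f^2 = -1$ from \eqref{norma_vect_killing} forces $\langle e_0,X_0\rangle = 0$. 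The only genuine obstacle is to pin down the Clifford sign conventions of the paper (in particular the sign of $N^2$); once that is fixed the cancellation is automatic, which geometrically reflects the fact that $\langle\langle e_0\cdot\varphi,\varphi\rangle\rangle$ ought to represent the constant ambient Killing vector $\partial_t\in\R^{1,3}$.
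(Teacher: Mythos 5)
Your proposal is correct, and the endgame matches the paper's: both compute $\nabla_X e_0 = f\,S(X)+\langle S(X),T\rangle N$ from \eqref{compatibilidad_prod_2}--\eqref{compatibilidad_prod_3} and show it is cancelled by the contribution of the spinorial equation. The intermediate manipulation is packaged differently, though. The paper symmetrizes with $(id+\tau)$ and re-runs the computation from Proposition \ref{isometria_haznormal_sitter} term by term (splitting $e_0 = T + fN$, extracting $\langle\langle B(X,T)\cdot\varphi,\varphi\rangle\rangle$ from the second-fundamental-form part and $-f\langle\langle S(X)\cdot\varphi,\varphi\rangle\rangle$ from the $N$ part, and checking the $\nu$-term drops under $(id+\tau)$). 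You instead observe that $\nabla_X\varphi=(\alpha+\gamma)\cdot\varphi$ with $\alpha=\tfrac12 S(X)\cdot N$, $\gamma=\tfrac12 X_0\cdot\nu$, that $\tau$ negates each, and therefore the two spinorial terms collapse to $\langle\langle [e_0,\alpha+\gamma]\cdot\varphi,\varphi\rangle\rangle$; the lemma then reduces to the single Clifford identity $\nabla_X e_0 + [e_0,\alpha+\gamma]=0$. Your computations check out with the paper's convention $v\cdot v = -\langle v,v\rangle$ (so $N^2=+1$, $\nu^2=-1$): one gets $[e_0,\alpha]=-f\,S(X)-\langle S(X),T\rangle N$ as you expected, and $[e_0,\gamma]=-\langle e_0,X_0\rangle\nu=0$ since $X_0$ is the projection of $X$ onto the orthogonal complement of $e_0$ and $\langle e_0,e_0\rangle=-1$. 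Your route is a bit cleaner and more self-contained (everything is one commutator computation, and it makes the geometric meaning transparent, namely that $\langle\langle e_0\cdot\varphi,\varphi\rangle\rangle$ is the parallel ambient field $\partial_t$); the paper's route is shorter on the page because it can cite Proposition \ref{isometria_haznormal_sitter}. One small remark: the injectivity of $\xi\mapsto\langle\langle\xi\cdot\varphi,\varphi\rangle\rangle$ that you invoke (from $[\varphi]\in Spin$ invertible) is not actually needed here, since you only need to verify the algebraic identity to conclude, not to deduce it.
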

\begin{proof}
We have by definition that
		\begin{equation*}\label{interna}
	\partial_ X \langle \langle e_0\cdot \varphi,\varphi\rangle \rangle= \langle\langle \nabla_X e_0\cdot \varphi, \varphi \rangle \rangle +  (id+\tau) \langle\langle  e_0\cdot \nabla_X\varphi, \varphi \rangle \rangle.
	\end{equation*}
	Following 
	the proof of Proposition \ref{isometria_haznormal_sitter} we find that
	\begin{eqnarray}
	(id + \tau)\langle\langle  e_0\cdot \nabla_X\varphi, \varphi \rangle \rangle
	&=& \langle\langle  B(X,T)\cdot \varphi, \varphi \rangle \rangle+  (id + \tau) \langle\langle  fN\cdot \nabla_X\varphi, \varphi \rangle \rangle. \label{cero_prod}
	\end{eqnarray}
	Since $\varphi$ satisfies equation \eqref{gauss_producto_RS2} we have that
	\begin{align*}
	(id + \tau) \langle\langle  fN\cdot \nabla_X\varphi, \varphi \rangle \rangle&=\frac{1}{2}(id + \tau) \langle\langle  fN\cdot (-\sum_{j=1}^2e_j\cdot B(X,e_j)\cdot\varphi+ X_0\cdot \nu \cdot \varphi), \varphi \rangle \rangle.
	\end{align*}
We simplify the above expression replacing
	\begin{eqnarray*}
		\langle \langle-\frac{f}{2} N\cdot (\sum_{j=1}^2e_j\cdot B(X,e_j)\cdot\varphi ), \varphi \rangle \rangle&=&-\frac{f}{2}\langle \langle S(X)\cdot \varphi, \varphi \rangle \rangle
	\end{eqnarray*}
	which implies
	\begin{equation}\label{uno_prod}
	\frac{1}{2}(id + \tau) \langle\langle  fN\cdot (-\sum_{j=1}^2e_j\cdot B(X,e_j)\cdot\varphi ), \varphi \rangle \rangle=-f\langle \langle S(X)\cdot \varphi, \varphi \rangle \rangle.
	\end{equation}
	On the other hand,
	$$\tau\langle\langle fN\cdot X_0\cdot \nu \cdot \varphi, \varphi \rangle \rangle=-\langle\langle fN\cdot X_0\cdot \nu \cdot \varphi, \varphi \rangle \rangle$$ and consequently,
	\begin{equation}\label{dos_prod}
	\frac{1}{2}(id + \tau) \langle\langle fN\cdot X_0\cdot \nu \cdot \varphi, \varphi \rangle \rangle=0.
	\end{equation}
	From the equations \eqref{cero_prod}, \eqref{uno_prod} and \eqref{dos_prod} it follows that
	\begin{eqnarray*}
		(id + \tau) \langle\langle  e_0\cdot \nabla_X\varphi, \varphi \rangle \rangle&=&  \langle\langle  B(X,T)\cdot \varphi, \varphi \rangle \rangle -f\langle \langle S(X)\cdot \varphi, \varphi \rangle \rangle.
	\end{eqnarray*}
	Finally, the conditions \eqref{compatibilidad_prod_2} and  \eqref{compatibilidad_prod_3} imply that
	\begin{equation*}
	\nabla_X e_0=f S(X)+\langle S(X), T \rangle N=f S(X)-B(X,T),
	\end{equation*} 
	and therefore
	$$ \langle\langle \nabla_X e_0\cdot \varphi, \varphi \rangle \rangle +  (id+\tau) \langle\langle  e_0\cdot \nabla_X\varphi, \varphi \rangle \rangle=0,$$
	which ends the proof of the lemma.
	\end{proof}

	The proof of the theorem then relies on the following lemma whose proof will be omitted since it is very similar to the proofs of Lemma \ref{lemadFX_Sitter} and Proposition \ref{isometria_haznormal_sitter} (see \cite{TBZJ} for more details).
	\begin{lem}\label{lema_inmersion_rs2}
		The function $F$ defined by \eqref{prod_nogrupo} satisfies $dF(X)=\langle\langle  X\cdot \varphi, \varphi \rangle \rangle$ for all $X\in TM$.
	\end{lem}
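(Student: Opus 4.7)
The plan is to compute $dF(X)$ directly from the explicit formula \eqref{prod_nogrupo} via the Leibniz rule, and then reduce the result to two ingredients already at our disposal: Lemma \ref{e0_constante}, which gives $\partial_X \langle \langle e_0 \cdot \varphi,\varphi \rangle \rangle = 0$, and Lemma \ref{eta}, which gives $d\eta(X)=-\langle X,T\rangle$. Applying Leibniz to \eqref{prod_nogrupo} and discarding the middle term produces
\begin{equation*}
 dF(X) \;=\; -\langle X,T\rangle\,\langle \langle e_0\cdot\varphi,\varphi \rangle \rangle \;+\; \partial_X\langle \langle \nu\cdot\varphi,\varphi \rangle \rangle,
\end{equation*}
so everything reduces to computing the second term.

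For that, I would use that $\nu=[\tilde s,e_4]\in Cl_\Sigma$ is parallel, together with the compatibility of $\nabla$ with $\langle \langle \cdot,\cdot \rangle \rangle$ (Lemma \ref{compatibilidad_transpuesta}) and the identities \eqref{tau_1}-\eqref{tau_2}, to obtain
\begin{equation*}
 \partial_X\langle \langle \nu\cdot\varphi,\varphi \rangle \rangle \;=\; (id+\tau)\,\langle \langle \nu\cdot\nabla_X\varphi,\varphi \rangle \rangle.
\end{equation*}
Substituting \eqref{gauss_producto_RS2} splits this into two pieces. Because the splitting $\mathbb{R}^{1,3}=\mathbb{R}^{1,2}\oplus\mathbb{R}e_4$ used to build $\Sigma$ puts $\nu$ in the orthogonal complement of $TM\oplus E$, in $Cl_\Sigma$ the element $\nu$ anticommutes with every element of $TM\oplus E$ and satisfies $\nu\cdot\nu=-1$. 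Consequently $\nu\cdot X_0\cdot\nu=X_0$, and the second piece collapses, via \eqref{tau_1}-\eqref{tau_2}, to $\langle \langle X_0\cdot\varphi,\varphi \rangle \rangle$.

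The delicate step is to show that the shape-operator piece
\begin{equation*}
-\tfrac12(id+\tau)\sum_j\langle \langle \nu\cdot e_j\cdot B(X,e_j)\cdot\varphi,\varphi \rangle \rangle
\end{equation*}
vanishes. For each $j$, the orthogonality of $e_j\in TM$ and $B(X,e_j)\in E$ gives $e_j\cdot B(X,e_j)=-B(X,e_j)\cdot e_j$, and combined with the two anticommutations of $\nu$ past $e_j$ and past $B(X,e_j)$, the anti-involution $\tau$ acts on the Clifford factor $\nu\cdot e_j\cdot B(X,e_j)$ by $-1$, so each summand is killed by $id+\tau$. This is the same mechanism that appears in the proof of Proposition \ref{isometria_haznormal_sitter} and is the main bookkeeping obstacle here. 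It yields $\partial_X\langle \langle \nu\cdot\varphi,\varphi \rangle \rangle=\langle \langle X_0\cdot\varphi,\varphi \rangle \rangle$.

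To conclude, I would use that $X\in TM$ is orthogonal to $N\in E$, so $\langle X,e_0\rangle=\langle X,T+fN\rangle=\langle X,T\rangle$ and therefore $X_0=X+\langle X,T\rangle e_0$. Substituting back gives
\begin{equation*}
 dF(X) \;=\; -\langle X,T\rangle\langle \langle e_0\cdot\varphi,\varphi \rangle \rangle + \langle \langle X\cdot\varphi,\varphi \rangle \rangle + \langle X,T\rangle\langle \langle e_0\cdot\varphi,\varphi \rangle \rangle \;=\; \langle \langle X\cdot\varphi,\varphi \rangle \rangle,
\end{equation*}
as required.
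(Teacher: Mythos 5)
Your proof is correct and is essentially the argument the paper omits: it combines Lemma \ref{eta} and Lemma \ref{e0_constante} to differentiate the first term of \eqref{prod_nogrupo}, and then applies to $\partial_X\langle\langle\nu\cdot\varphi,\varphi\rangle\rangle$ the same $(id+\tau)$-cancellation mechanism used in Lemma \ref{lemadFX_Sitter} and Proposition \ref{isometria_haznormal_sitter}, exactly as the paper's remark indicates. All the Clifford bookkeeping checks out: $\nu$ is parallel and anticommutes with $TM\oplus E$, $\nu\cdot\nu=-1$ in the convention $v\cdot v=-\langle v,v\rangle$ used in the proof of Proposition \ref{isometria_haznormal_sitter}, $\tau$ acts by $-1$ on $\nu\cdot e_j\cdot B(X,e_j)$ so that summand is annihilated by $id+\tau$, and $X_0=X+\langle X,T\rangle e_0$ for $X\in TM$ cancels against the $-\langle X,T\rangle\langle\langle e_0\cdot\varphi,\varphi\rangle\rangle$ term coming from $d\eta$.
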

	\begin{proof}[Sketch of the proof of Theorem \ref{thm_inmersion_rs2}.]
		Suppose that there exists a solution $\varphi \in \Gamma(U\Sigma )$ of \eqref{gauss_producto_RS2}, then $F$ defined by (\ref{prod_nogrupo}) takes values in $\R_{-}\times \mathbb{S}^2:$ it follows from Lemma \ref{e0_constante}
		 that $\langle \langle e_0\cdot \varphi,\varphi\rangle \rangle$ is constant and can be identified after a rigid motion with the first element $e_1$ of the standard basis of $\R^{1,3};$ by definition $ \langle \langle \nu\cdot \varphi,\varphi\rangle \rangle=Ad([\varphi]^{-1})([\nu])$ with $[\varphi]^{-1}\in Spin(1,3),$ so that $ \langle \langle \nu\cdot \varphi,\varphi\rangle \rangle$ has norm $1$; furthermore $\langle \langle \nu\cdot \varphi,\varphi \rangle \rangle $ and $\langle \langle e_0\cdot \varphi,\varphi\rangle \rangle=e_1$ are orthogonal, so $\langle \langle \nu\cdot \varphi,\varphi\rangle \rangle\in \mathbb{S}^2$ and $F(M)\subset \R_{-}\times  \mathbb{S}^2$. By Lemma \ref{lema_inmersion_rs2} we have that $dF(X)= \langle \langle X\cdot \varphi,\varphi \rangle \rangle$ and we can follow the proof of Proposition \ref{isometria_haznormal_sitter} to obtain that $F:M\longrightarrow \R_{-}\times \mathbb{S}^2$ is an isometry and 
		\begin{eqnarray*}
			\Phi_E:\hspace{.5cm} E&\longrightarrow& T(\R_{-}\times \mathbb{S}^{2})\\
			X\in E_m&\longmapsto& (F(m), \langle \langle X\cdot \varphi, \varphi \rangle \rangle)
		\end{eqnarray*}
		is a bundle isomorphism between $E$ and the normal bundle of the immersion $F$ which identifies $B$ to the second fundamental form. The  implication $2\implies 1$ is a consequence of the Gauss formula, equation \eqref{gauss_RS2}.
	\end{proof}
	\section{Representation of surfaces in $\mathbb{L} (\kappa,\tau)$ spaces}\label{inmersiones_lkt}
	In analogy with the Riemannian homogeneous spaces  $\mathbb{E}(\kappa,\tau)$ (see \cite{BD} and \cite{MZNO}), the $3$-dimensional Lorentzian homogeneous spaces $\mathbb{L}(\kappa, \tau)$ are Lorentzian fibrations $\mathbb{L}(\kappa, \tau)\rightarrow M^2(\kappa)$ where $M^2(\kappa)$ is a Riemannian surface with constant sectional curvature $\kappa$; the fibers of the projection are integral curves of a (complete) time-like unit Killing vector field $\xi$ over the total space and $\tau$ is the bundle curvature which is defined as the real number such that
	\begin{equation}
	\overline{\nabla}_X \xi=-\tau X\times \xi 
	\end{equation}
	where $\overline{\nabla}$ is the Levi-Civita connection and $\times$ is the natural cross product in $\mathbb{L}(\kappa,\tau).$ A precise description of these spaces is the following: for $\kappa\in\R,$ defining
	\begin{eqnarray*}
		V:=\left\{(x,y,z)\in \R^3:1+\frac{\kappa}{4}(x^2+y^2)>0\right\}& \ \  \text{and} \ \ &\lambda:=\frac{1}{1+\frac{\kappa}{4}(x^2+y^2)},
	\end{eqnarray*}
	the space is 
	\begin{equation*}
	\mathbb{L}(\kappa,\tau)=\left(V,\  \lambda^2(dx ^2+dy^2)-(\tau\lambda(ydx-xdy)+dz)^2\right).
	\end{equation*}
	Using this representation for $\mathbb{L}(\kappa,\tau)$ and setting $\sigma=\frac{\kappa}{2\tau}$ the frame
	\begin{eqnarray*}
		E_1&=&\lambda^{-1}(cos(\sigma z)\partial_x+sen(\sigma z)\partial_y)+\tau(xsen(\sigma z)-ycos(
		\sigma z))\partial_z\\
		E_2&=& \lambda^{-1}(-sen(\sigma z)\partial_x+cos(\sigma z)\partial_y)+\tau(xcos(\sigma z)-ysen(
		\sigma z))\partial_z\\
		E_3&=&\partial_z
	\end{eqnarray*}
	is orthonormal and such that $\langle E_1,E_1\rangle=\langle E_2,E_2\rangle=1=-\langle E_3,E_3\rangle.$ Moreover, $E_1\times E_2=-E_3,\ E_2\times E_3=E_1,\ E_1\times E_3=-E_2,$ and, for $\Gamma_{ij}^k:=\langle \overline{\nabla}_{_{E_i}}E_j,E_k\rangle$ 
	\begin{equation*}
	\Gamma_{21}^3=\Gamma_{13}^2=\tau=-\Gamma_{12}^3=-\Gamma_{23}^1,\hspace{.5cm}\Gamma_{31}^2=\sigma+\tau=-\Gamma_{32}^1
	\end{equation*}
	and
	\begin{eqnarray}
	[E_1,E_2]=2\tau E_3,&[E_2,E_3]=\sigma E_1,&[E_3,E_1]=\sigma E_2.
	\end{eqnarray}

	We describe in the following table the spaces $\mathbb{L}(\kappa,\tau)$ in terms of the values $\kappa$ and $\tau.$ 
	\begin{center}
		\begin{tabular}{ | c | c | c | c | }
			\hline
			& $\kappa<0$ &$ \kappa=0$ &$\kappa>0$\\  \hline
			$\tau=0$ & $\mathbb{H}^2(\kappa)\times \R_{-}$ &$ \mathbb{L}^3 $& $\mathbb{S}^2(\kappa)\times \R_{-}$\\ \hline	
			\vspace{0.1cm}			
			$\tau\neq 0$ & $\widetilde{SL_2^1}$ & ${Nil_3^1}$& $\mathbb{S}^{3,1}_{Berger}$ \\ \hline
		\end{tabular}
	\end{center}
	Some particular cases occur when $\kappa+4\tau^2=0:$ if $\kappa=\tau=0$ then $\mathbb{L}(\kappa,\tau)=\mathbb{L}^3,$ and in the other case $\mathbb{L}(\kappa,\tau)=\mathbb{H}^3_1(\kappa)$ (in the table above this space corresponds to $\widetilde{SL^1_2}$). Let us also note that the space $\mathbb{S}_1^3$  does not admit any unit Killing vector field and therefore cannot be a $\mathbb{L}(\kappa,\tau)$ space.
	
	We now assume $\tau\neq 0$ and characterize the immersion of a Riemannian surface in $\mathbb{L}(\kappa,\tau)$ using its Lie group structure (see [1] for a similar result in the space $\mathbb{E}(\kappa,\tau)$). The Lie algebra of $\mathbb{L}(\kappa,\tau)$ is $\g=\R^3$ with the Lie bracket given in the canonical basis by
	\begin{eqnarray}
	[e_1^o,e_2^o]=2\tau e_3^o,&[e_2^o,e_3^o]=\sigma e_1^o,&[e_3^o,e_1^o]=\sigma e_2^o.
	\end{eqnarray}
	The metric in $\mathbb{L}(\kappa,\tau)$ is the left-invariant metric $\langle.,.\rangle$ such that $(e_1^o,e_2^o,e_3^o)$ is an orthonormal basis of $\mathfrak{g}$ satisfying $\langle e_1^o,e_1^o\rangle=\langle e_2^o,e_2^o\rangle=1=-\langle e_3^o,e_3^o\rangle$.
	
	Let $M$ be an orientable Riemannian surface and
	$S:TM\rightarrow TM$ a symmetric operator. Let us suppose that there exist $T\in \Gamma(TM)$ and $\nu \in C^{\infty}(M)$ such that for all $X\in TM$
	\begin{eqnarray}
	&\vert T \vert^2-\nu^2=-1,\label{comp_0}\\
	&\nabla_{_{X}}T=\nu(S(X)+\tau J(X)) \label{comp_1},\\
	&d\nu(X)=\langle S(X)+\tau J(X),T\rangle,\label{comp_2}
	\end{eqnarray}
	where $J:TM\rightarrow TM$ is the rotation of angle $+{\pi}/{2}$ in the tangent planes.
	The following theorem is similar to Theorem 5 in [1] (and we omit the proof here).
	\begin{thm}\label{inmersion_lkt}
		If $M$ is simply connected, the following statements are equivalent:
		\begin{enumerate}
			\item There exists $\psi \in \Gamma(\Sigma M)$ solution of
			\begin{equation}\label{intrinseca_lkt}
			\nabla_{_X}\psi=\frac{i}{2}S(X)\cdot \psi -\frac{1}{2}(i\tau X+\langle X,T\rangle(\sigma +2\tau)(iT+\nu))\cdot \omega \cdot \psi,
			\end{equation}
			for all $X\in TM$ and such that $\vert \psi^+\vert^2-\vert \psi^-\vert^2=1$. Here $\omega=\epsilon_1\cdot \epsilon_2$ where $(\epsilon_1,\epsilon_2)$ is a positively oriented orthonormal basis of $M$.
			\item There exists an isometric immersion of $M$ into $\mathbb{L}(\kappa,\tau)$ with shape operator $S$.
		\end{enumerate}
	\end{thm}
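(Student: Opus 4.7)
The plan is to reduce the intrinsic equation \eqref{intrinseca_lkt} to the extrinsic Gauss-type equation \eqref{gauss_grupos} via the isomorphism $\psi\mapsto\psi^*$ of \eqref{id_riemanniana_rev*}, and then to invoke Theorem \ref{thm main result} applied to the Lie group $G=\mathbb{L}(\kappa,\tau)$. Since $\tau\neq 0$ this space is indeed a Lorentzian metric Lie group with the structure constants listed just before the statement, so Theorem \ref{thm main result} is available.

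First, I would translate \eqref{comp_0}--\eqref{comp_2} into the compatibility conditions of Remark \ref{compatibilidad_coordenadas}. The time-like Killing direction $e_3^o$ plays a privileged role: set $T_3:=T$ and $\nu_3:=\nu$ so that \eqref{comp_0} is the $(i,j)=(3,3)$ case of \eqref{ortonormalidad} and \eqref{comp_1}--\eqref{comp_2} give \eqref{derivada_tj}--\eqref{derivada_nuj} for $j=3$, after substituting the non-zero $\Gamma_{i3}^k$'s. Then I complete $\underline{e}_3=T_3+\nu_3N$ to an orthonormal frame $(\underline{e}_1,\underline{e}_2,\underline{e}_3)$ of $TM\oplus E$, producing $T_1,T_2\in\Gamma(TM)$ and $\nu_1,\nu_2\in C^\infty(M)$ uniquely up to a rotation in the plane spanned by $\underline{e}_1,\underline{e}_2$; this rotation can be absorbed by a rotation of the spinor frame, so we may fix it. The remaining cases $j=1,2$ of \eqref{derivada_tj}--\eqref{derivada_nuj} follow by differentiating the orthonormality relations, as in the analogous $\mathbb{E}(\kappa,\tau)$ argument of \cite{JRH}.

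Next, using Lemma \ref{gama_bivector_coord} and the fact that only six $\Gamma_{ij}^k$ are non-zero, I would expand the bivector $\Gamma(X)$. The contributions from $(j,k)=(1,2)$ involve only $\tau$ and, after rewriting $\underline{e}_1\cdot\underline{e}_2=\omega\cdot\underline{e}_3\cdot N$ (a consequence of $\underline{e}_1\cdot\underline{e}_2\cdot\underline{e}_3=\omega\cdot N$, as exploited in the proof of Theorem \ref{H2xR}), they produce the universal piece $-\tau X\cdot\omega\cdot N$. The contributions involving the index $3$ come in pairs dictated by the antisymmetries $\Gamma_{21}^3=-\Gamma_{12}^3$, $\Gamma_{13}^2=-\Gamma_{23}^1$, $\Gamma_{31}^2=-\Gamma_{32}^1$; after regrouping, the transverse data $T_1,T_2,\nu_1,\nu_2$ cancel and one is left with a term depending only on $\langle X,T\rangle$, $T$, $\nu$, multiplied by $\sigma+2\tau$. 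Applying the isomorphism $\psi\mapsto\psi^*$ via \eqref{id_estrella_1_riem} and Lemma \ref{gama_bivector_coord_prop}, the tangential bivector pieces transfer literally, while the pieces involving $N$ pick up a factor $i$; the extrinsic equation \eqref{gauss_grupos} then becomes exactly \eqref{intrinseca_lkt}, and the condition $\varphi\in\Gamma(U\Sigma)$ transports to $|\psi^+|^2-|\psi^-|^2=1$ by the third identity in \eqref{id_estrella_1_riem}.

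The main obstacle is the bookkeeping in the middle step: verifying that all cross-terms involving $T_1,T_2,\nu_1,\nu_2$ cancel so that $\Gamma(X)$ depends only on the intrinsic data $T,\nu,X,\omega,N$. This cancellation is forced by the specific antisymmetry pattern of the structure constants combined with the Clifford identity $\underline{e}_1\cdot\underline{e}_2\cdot\underline{e}_3=\omega\cdot N$; once it is in place, the remainder is a direct adaptation of the arguments used for Theorems \ref{riemanniana} and \ref{H2xR}, which justifies the author's choice to omit the detailed calculation.
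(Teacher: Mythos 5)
The paper itself omits the proof, pointing only to the analogous $\mathbb{E}(\kappa,\tau)$ result in the literature, so there is no internal proof to compare against; here is an assessment of your proposal on its own terms.

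Your central computation is correct and is indeed the heart of the matter: expanding $\Gamma(X)$ via Lemma \ref{gama_bivector_coord} with the six nonzero $\Gamma_{ij}^k$, using $\underline e_1\cdot\underline e_2\cdot\underline e_3=\omega\cdot N$ together with $\langle X,T_1\rangle\underline e_1+\langle X,T_2\rangle\underline e_2 = X+\langle X,T_3\rangle\underline e_3$ for $X\in TM$, one finds
\begin{equation*}
\Gamma(X) = -\tau\, X\cdot\omega\cdot N - (\sigma+2\tau)\langle X,T\rangle\,(T\cdot N+\nu)\cdot\omega,
\end{equation*}
which depends only on $T=T_3,\ \nu=\nu_3$; after transporting through $\psi\mapsto\psi^*$ this gives exactly \eqref{intrinseca_lkt}. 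So the ``bookkeeping'' you identify as the main obstacle in fact works out.

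The real gap is earlier, in the reduction to Theorem \ref{thm main result}. That theorem requires the full compatibility package of Remark \ref{compatibilidad_coordenadas}: a metric bundle isomorphism $f\colon TM\oplus E\to M\times\g$, i.e.\ a whole orthonormal frame $(\underline e_1,\underline e_2,\underline e_3)$ satisfying \eqref{derivada_TME} for $j=1,2,3$. Theorem \ref{inmersion_lkt} only hypothesizes \eqref{comp_0}--\eqref{comp_2}, which is the $j=3$ slice. Your claim that the $j=1,2$ cases of \eqref{derivada_tj}--\eqref{derivada_nuj} ``follow by differentiating the orthonormality relations'' is not correct: differentiating orthonormality tells you $\nabla_X\underline e_1,\nabla_X\underline e_2$ are orthogonal to $\underline e_1,\underline e_2$ respectively and determines their $\underline e_3$-components, but it does not determine the $\underline e_2$-component of $\nabla_X\underline e_1$ (equivalently the connection form of the $SO(2)$-bundle $\underline e_3^\perp$), which is exactly what \eqref{derivada_TME} prescribes. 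For an arbitrary orthonormal completion these equations will fail. One must either (a) construct $\underline e_1,\underline e_2$ from the spinor itself, via $\underline e_i := f^{-1}(e_i^o)$ with $f=Ad([\psi^*]^{-1})$, after first replacing $\psi$ by $\psi\cdot a$ for a suitable constant $a\in Spin(1,2)$ so that $Ad([\psi^*])(e_3^o)=T+\nu N$, and then verify that the Killing equation \eqref{intrinseca_lkt} forces \eqref{derivada_TME}; or (b) avoid Theorem \ref{thm main result} altogether and show directly that \eqref{intrinseca_lkt} encodes the Gauss and Codazzi equations, then invoke a Daniel-type fundamental theorem for $\mathbb{L}(\kappa,\tau)$. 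Either route is nontrivial and is precisely the content the present proposal elides; the cancellation you verified is necessary but not sufficient, because it shows the two equations agree \emph{given} a compatible frame, while the existence of that frame is what must be established.

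A secondary remark: the spinor equation \eqref{intrinseca_lkt} is invariant under $\psi\mapsto\psi\cdot a$ for any constant $a\in Spin(1,2)$, whereas $(T,\nu)$ are fixed data. Hence the frame $Ad([\psi^*])(e_i^o)$ is only determined up to a constant rotation, and the matching $Ad([\psi^*])(e_3^o)=T+\nu N$ must be arranged, not assumed. Your proposal implicitly uses that consistency without justifying it. Making this explicit would complete the argument.
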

	As a consequence of that theorem let us deduce a representation theorem for an immersion in the 3-dimensional anti de Sitter space, when we consider this space as the group of $2\times 2$ matrices
	\begin{equation*}
	SU_2^1=\left\{\left(\begin{array}{cc}
	z&\omega\\
	\overline{\omega}& \overline{z}
	\end{array}\right)\in M_2(\mathbb{C})\ :\ \vert z\vert^2 - \vert\omega \vert^2=-1\right\},
	\end{equation*}
	whose Lie algebra is
	\begin{equation*}
	\left\{\left(\begin{array}{cc}
	i\lambda & a\\
	\overline{a}& -i\lambda
	\end{array}\right)\ : a\in \mathbb{C}, \ \lambda\in \R \right\}.
	\end{equation*}
	A basis for this Lie algebra is
	\begin{eqnarray*}
		E_1=\left(\begin{array}{cc}
			0 & 1\\
			1& 0
		\end{array}\right), &E_2=\left(\begin{array}{cc}
			0 & -i\\
			i& 0
		\end{array}\right),&E_3=\left(\begin{array}{cc}
			i & 0\\
			0&-i
		\end{array}\right),
	\end{eqnarray*}
	the Lie bracket is determined by
	\begin{eqnarray*}
		[E_1,E_2]=2E_3,&[E_2,E_3]=-2E_1,&[E_3,E_1]=-2E_2,
	\end{eqnarray*}
	and the Lorentzian metric of $SU_1^2$ is the left-invariant metric such that $\langle E_i,E_j\rangle=0$ if $i\neq j$ and $\langle E_1,E_1\rangle=\langle E_2,E_2\rangle=1=-\langle E_3,E_3\rangle$. The non-trivial solution $\kappa=-4$ and $\tau=1$ of $\kappa+4\tau^2=0$ gives that $\mathbb{L}(\kappa,\tau)=SU_2^1$ and from Theorem \ref{inmersion_lkt} we deduce the following representation theorem in this space:
	\begin{thm}\label{anti_de_sitter_como_grupo}Let $M$ be a simply connected Riemannian surface and   $S:TM\rightarrow TM$ a symmetric operator.
		Let us suppose that there exist $T\in\Gamma(TM)$ and $\nu\in C^{\infty}(M)$ solutions of \eqref{comp_0}-\eqref{comp_2} with $\tau=1.$ The following statements are equivalent:
		\begin{enumerate}
			\item  There exists $\psi \in \Gamma(\Sigma M)$ solution of 
			\begin{equation}\label{intrinseca_lkt_anti_sitter}
			\nabla_{_X}\psi=\frac{i}{2}S(X)\cdot \psi -\frac{1}{2}i X\cdot \omega \cdot \psi,
			\end{equation}
			for all $X\in TM$ and such that $\vert \psi^+\vert^2-\vert \psi^-\vert^2=1$.
			\item There exists an isometric immersion of $M$ into $SU_2^1$ with shape operator $S$.
		\end{enumerate}
	\end{thm}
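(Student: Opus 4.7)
The plan is to recognize Theorem \ref{anti_de_sitter_como_grupo} as a direct specialization of Theorem \ref{inmersion_lkt} to the parameters $\kappa=-4,\ \tau=1$, once the model $\mathbb{L}(-4,1)$ has been identified with the metric Lie group $SU_2^1$.

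My first step is to verify this identification. By definition $\sigma=\kappa/(2\tau)$, so the choice $\kappa=-4,\ \tau=1$ yields $\sigma=-2$. Plugging these values into the brackets of $\mathfrak{g}$ given in the excerpt, one obtains $[e_1^o,e_2^o]=2\,e_3^o$, $[e_2^o,e_3^o]=-2\,e_1^o$ and $[e_3^o,e_1^o]=-2\,e_2^o$, which match exactly the bracket relations of $SU_2^1$ displayed above. Because the signature conditions $\langle e_i^o,e_i^o\rangle=\langle E_i,E_i\rangle$ also agree, the linear map sending $e_i^o$ to $E_i$ is an isometry of metric Lie algebras. Hence $\mathbb{L}(-4,1)$ and $SU_2^1$ coincide as metric Lie groups.

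My second step is to substitute $\tau=1$ and $\sigma=-2$ into the spinorial immersion equation \eqref{intrinseca_lkt}. Since $\sigma+2\tau=0$, the term $\langle X,T\rangle(\sigma+2\tau)(iT+\nu)\cdot\omega\cdot\psi$ vanishes, so \eqref{intrinseca_lkt} reduces to
$$\nabla_X\psi=\frac{i}{2}S(X)\cdot\psi-\frac{1}{2}(iX)\cdot\omega\cdot\psi,$$
which is precisely \eqref{intrinseca_lkt_anti_sitter}. Moreover the compatibility conditions \eqref{comp_0}-\eqref{comp_2} with $\tau=1$ are the hypotheses of the theorem, so the desired equivalence follows at once from Theorem \ref{inmersion_lkt}.

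No serious obstacle arises; the argument is purely a bookkeeping specialization. The only point that must be checked with some care is the matching of the two Lie algebras, including brackets, the choice of basis, and metric signatures, since this is the geometric content that makes the substitution $(\kappa,\tau)=(-4,1)$ meaningful.
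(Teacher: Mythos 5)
Your proof is correct and follows exactly the route the paper intends: the paper itself introduces the theorem with the remark that $(\kappa,\tau)=(-4,1)$ gives $\mathbb{L}(\kappa,\tau)=SU_2^1$ and that the result then follows from Theorem \ref{inmersion_lkt}, and you carry this out by verifying the Lie algebra identification and noting that $\sigma+2\tau=0$ kills the extra term in \eqref{intrinseca_lkt}.
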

	\section{Correspondences between CMC surfaces}\label{sec corr}
	\subsection{Correspondence between CMC surfaces in $\R^{1,2}$ and in $\mathbb{H}_1^{3}$}\label{sec_r12_h13}
	Let $(M,g)$ be an oriented Riemannian surface and denote by $\langle \ , \ \rangle$ the real part of the natural Hermitian product in $\Sigma M$. Recall that for $X \in TM$ and $\psi_1,\psi_2$ sections of $ \Sigma M$ we have 
	\begin{eqnarray*}
		\langle X\cdot \psi_1 , X\cdot \psi_2 \rangle= \langle \psi_1 ,\psi_2 \rangle\ \text{if}\  \vert X\vert =1 &\ \text{and}\ &\langle X\cdot \psi_1 , \psi_2 \rangle=- \langle \psi_1 ,X\cdot \psi_2 \rangle .
	\end{eqnarray*}
	Note that if $\psi=\psi^++\psi^-$ in the decomposition $\Sigma M=\Sigma^+ M\oplus\Sigma^- M$ then
	$\langle \psi , \psi \rangle =\vert \psi^+\vert^2+\vert \psi^- \vert^2$ and if $\overline{\psi}=\psi^+ - \psi^-$ then $\psi$ and $\overline{\psi}$ are related by $ie_1\cdot e_2\cdot \psi= \overline{\psi}$ for $(e_1,e_2)$ a positively oriented orthonormal frame of $TM$ and therefore,
	\begin{equation}\label{psibarrapsi}
	\langle \psi, ie_1\cdot e_2\cdot \psi\rangle=\langle \psi,\overline{\psi}\rangle= \vert \psi^+\vert^2-\vert \psi^- \vert^2.
	\end{equation}
	
	Here, we first prove that the Killing-type equation characterizing the immersions of $M$ in the $\mathbb{L}(\kappa,\tau)$ spaces with  $\kappa+4\tau^2=0$ (see Theorem \ref{inmersion_lkt}) is equivalent to its associated Dirac equation and deduce that the immersions in these spaces are also characterized by Dirac equations. Let $H:M\longrightarrow \R$ be a differentiable function and $\tau$ a real number.
	
	\begin{thm}\label{killing_dirac_lkt}
		There exists a correspondence between the following data:
		\begin{enumerate}
			\item A section $\psi$ of $\Sigma M$ with $\vert \psi^+\vert^2-\vert \psi^- \vert^2=1$ solution of the Dirac equation
			\begin{equation}\label{dirac_lkt}
			D\psi= - iH\psi + i \tau \omega \cdot \psi.
			\end{equation}
			\item A pair $(\psi, S)$, where $S$ is a symmetric operator such that $H=\frac{1}{2} trS$ and $\psi\in \Gamma(\Sigma M)$ is a solution of
			\begin{equation}\label{killing_lkt}
			\nabla_X\psi=\frac{i}{2}S(X)\cdot \psi - \frac{i}{2}\tau X \cdot \omega \cdot \psi\ \ 
			\end{equation}
			for all $X\in TM$ such that $\vert \psi^+\vert^2-\vert \psi^- \vert^2=1$.
		\end{enumerate}
	\end{thm}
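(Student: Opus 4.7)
The proof splits into the two implications. The direction $(2) \Rightarrow (1)$ is obtained by Clifford-contracting the Killing-type equation $(\ref{killing_lkt})$. Applying $\sum_{i=1}^2 e_i \cdot$ to both sides, the left-hand side becomes $D\psi$; on the right-hand side one uses the two elementary dimension-$2$ identities $\sum_{i=1}^2 e_i \cdot S(e_i) = -\mathrm{tr}(S)$ valid for any symmetric $S$, and $\sum_{i=1}^2 e_i \cdot e_i = -2$. Together with the hypothesis $H = \frac{1}{2}\mathrm{tr}(S)$, these yield exactly $(\ref{dirac_lkt})$: the $S$-term collapses to $-iH\psi$ and the $\omega$-term to $+i\tau\,\omega\cdot\psi$.

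For the substantive direction $(1) \Rightarrow (2)$, I introduce the modified connection $\tilde\nabla_X := \nabla_X + \tfrac{i\tau}{2}\, X\cdot \omega$ on $\Sigma M$. The same short computation as above shows that $(\ref{dirac_lkt})$ is equivalent to the untwisted Dirac relation $\tilde D\psi = -iH\psi$ with $\tilde D := \sum_i e_i \cdot \tilde\nabla_{e_i}$, while $(\ref{killing_lkt})$ is equivalent to $\tilde\nabla_X \psi = \tfrac{i}{2}\,S(X)\cdot\psi$. So the entire content of $(1)\Rightarrow(2)$ reduces to the Friedrich-type algebraic claim that a spinor $\psi$ with $|\psi^+|^2 - |\psi^-|^2 = 1$ satisfying $\tilde D\psi = -iH\psi$ automatically admits a symmetric field $S$ of trace $2H$ realizing $\tilde\nabla_X \psi = \tfrac{i}{2}S(X)\cdot\psi$.

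I would construct $S$ from $\psi$ by the explicit formula
\begin{equation*}
g(S(X), Y) \;:=\; -2\,\mathrm{Re}\,\langle\, i Y \cdot \tilde\nabla_X \psi,\, \overline{\psi}\,\rangle,
\end{equation*}
where $\overline{\psi} := \psi^+ - \psi^-$, and then verify three properties. Symmetry $g(S(X),Y) = g(S(Y),X)$ is obtained by decomposing the bilinear form into symmetric and antisymmetric parts and observing that the antisymmetric part is proportional to a pairing with $\tilde D\psi$, which collapses by the Dirac relation. The trace identity $\mathrm{tr}(S) = 2H$ follows by direct contraction using the Dirac equation a second time. Finally, the pointwise identity $\tilde\nabla_X \psi = \tfrac{i}{2} S(X) \cdot \psi$ is obtained by expanding $\tilde\nabla_X \psi$ in the real frame $\{\psi, \, i\psi,\, e_1 \cdot \psi,\, e_2 \cdot \psi\}$ of $\Sigma_m M$ — a frame at each point because the normalization forces $\psi_m \neq 0$ — and showing that the scalar components along $\psi$ and $i\psi$ both vanish. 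The $\psi$-component is killed by the constancy of $\langle\langle \psi, \psi \rangle\rangle = |\psi^+|^2 - |\psi^-|^2 = 1$ combined with Lemma $\ref{compatibilidad_transpuesta}$; the $i\psi$-component requires the Dirac equation.

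The main obstacle is precisely this last step: pinning down that both scalar components of $\tilde\nabla_X \psi$ vanish. The norm-constancy supplies only one real scalar constraint per direction $X$, so eliminating both the $\psi$- and $i\psi$-components forces one to use the full Dirac equation $\tilde D \psi = -iH\psi$, exploiting the interaction between the $\omega$-twist in $\tilde\nabla$ and the $\Z_2$-grading $\Sigma M = \Sigma^+ M \oplus \Sigma^- M$ on which $\omega$ acts by $\pm i$. Once this is settled, the proof is complete; the converse correspondence (from the Killing datum back to the Dirac spinor) is automatic since $S$ is recovered from $\tilde\nabla\psi$ by the very formula used in its construction.
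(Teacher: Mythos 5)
The direction $(2)\Rightarrow(1)$ and the reduction via the twisted connection $\tilde\nabla_X = \nabla_X + \tfrac{i\tau}{2}X\cdot\omega$ to a Friedrich-type claim with $\tilde D\psi = -iH\psi$ are both correct and slick — this is a genuinely cleaner packaging than the paper's direct approach, and your formula for $S$ paired against $\overline\psi$ also checks out (it avoids the $|\psi|^{-2}$ normalization and the explicit $\tau$-correction in the paper's Lemma~\ref{operador_forma_lkt}).

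However, the final step — expanding $\tilde\nabla_X\psi$ in the frame $\{\psi,\ i\psi,\ e_1\cdot\psi,\ e_2\cdot\psi\}$ and killing the $\psi$- and $i\psi$-components — does not close, for two reasons. First, this frame is not orthogonal for the metric $\langle\cdot,\cdot\rangle = \Re\langle\cdot,\cdot\rangle_\C$ when $\psi^-\neq 0$: one has $\langle i\psi, e_1\cdot\psi\rangle = 2\Im\langle\psi^+, e_1\cdot\psi^-\rangle \neq 0$ in general, so ``the scalar component along $\psi$'' is not an inner product and cannot be read off the constancy of $\langle\langle\psi,\psi\rangle\rangle$. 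Second, and more seriously, vanishing of the $\psi$- and $i\psi$-components in your frame would place $\tilde\nabla_X\psi$ in $\mathrm{span}_\R\{e_1\cdot\psi,\ e_2\cdot\psi\}$, whereas the Killing target $\tfrac{i}{2}S(X)\cdot\psi$ lies in $\mathrm{span}_\R\{ie_1\cdot\psi,\ ie_2\cdot\psi\}$ — and these two real $2$-planes are distinct whenever $\psi^-\neq 0$ (one checks that $ie_1\cdot\psi = a\,e_1\cdot\psi + b\,e_2\cdot\psi$ with $a,b\in\R$ forces $\psi^-=0$). The paper avoids both problems by working in the genuinely orthonormal frame $\tfrac{i}{|\psi|}\bigl(\psi,\ e_1\cdot\psi,\ e_2\cdot\psi,\ e_1\cdot e_2\cdot\psi\bigr)$. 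There, the orthogonal complement of $\mathrm{span}\{ie_1\cdot\psi, ie_2\cdot\psi\}$ is $\mathrm{span}\{i\psi,\ ie_1\cdot e_2\cdot\psi\} = \mathrm{span}\{i\psi,\ \overline\psi\}$, and it is the $\overline\psi$-component (not the $\psi$-component) that is annihilated by the norm constraint: differentiating $|\psi^+|^2-|\psi^-|^2 = \langle\psi, ie_1\cdot e_2\cdot\psi\rangle = 1$ gives precisely $\langle\tilde\nabla_X\psi, ie_1\cdot e_2\cdot\psi\rangle = 0$. The $i\psi$-component is then killed by the Dirac equation together with this first vanishing, exactly as you anticipate. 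So the gap is in the choice of frame and in which component the normalization controls; once you replace $\{\psi,i\psi,e_1\cdot\psi,e_2\cdot\psi\}$ by the orthonormal frame above and substitute ``$\overline\psi$-component'' for ``$\psi$-component'', the argument goes through and in fact matches the paper's proof, modulo your nice $\tilde\nabla$-reduction.
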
	
	To prove this theorem we use the following lemma whose proof can be found in \cite{TBZJ} (see Lemma 9.1.2).
	\begin{lem}\label{operador_forma_lkt}
		If $\psi\in\Gamma(\Sigma M)$ with $\vert \psi^+\vert^2-\vert \psi^- \vert^2=1$ satisfies (\ref{dirac_lkt}) then the operator $S:TM\longrightarrow TM$ given by
		\begin{equation}
		\langle S(Y), X\rangle= \frac{2}{\vert \psi \vert^2} \left(\langle i X\cdot \nabla_Y \psi , \psi \rangle -\frac{\tau}{2}g(X,JY) \vert \psi \vert^2 \right)\ \ \ \ X, Y\in TM ,
		\end{equation}
		is symmetric and its trace is $H=\frac{1}{2}trS$; here $J$ still denotes the rotation of angle $+\pi/2$ in the tangent planes of $M.$
	\end{lem}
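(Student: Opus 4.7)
The plan is to verify the two claims---symmetry of $S$ and the trace identity $\mathrm{tr}\,S=2H$---separately, in both cases using the Dirac equation \eqref{dirac_lkt} as the only input beyond the formal properties of $\langle\cdot,\cdot\rangle$ and Clifford multiplication recalled at the beginning of Section \ref{sec corr}. The algebraic facts I will invoke repeatedly are: (i) $\langle X\cdot\varphi_1,\varphi_2\rangle=-\langle\varphi_1,X\cdot\varphi_2\rangle$ for $X\in TM$, which extends to $\langle\omega\cdot\varphi_1,\varphi_2\rangle=-\langle\varphi_1,\omega\cdot\varphi_2\rangle$ and therefore forces $\langle\omega\cdot\psi,\psi\rangle=0$; (ii) $\omega\cdot\omega=-1$ in the Clifford algebra of the oriented Riemannian tangent plane; and (iii) multiplication by $i$ is skew with respect to the real product $\langle\cdot,\cdot\rangle$.

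For the trace, pick a positively oriented orthonormal frame $(e_1,e_2)$ of $TM$. Because $Je_i\perp e_i$, the $\tau$-term in the defining formula of $S$ contributes nothing, and
\[
\mathrm{tr}\,S=\sum_{i=1}^{2}\langle S(e_i),e_i\rangle=\frac{2}{|\psi|^2}\sum_{i=1}^{2}\langle ie_i\cdot\nabla_{e_i}\psi,\psi\rangle=\frac{2}{|\psi|^2}\langle iD\psi,\psi\rangle.
\]
Substituting $iD\psi=H\psi-\tau\omega\cdot\psi$ from \eqref{dirac_lkt} and using $\langle\omega\cdot\psi,\psi\rangle=0$ yields $\mathrm{tr}\,S=2H$, which is the second assertion.

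For symmetry, subtracting the expressions of $\langle S(Y),X\rangle$ and $\langle S(X),Y\rangle$ and using the skew-symmetry $g(Y,JX)=-g(X,JY)$ reduces the claim to the pointwise identity
\[
\langle iX\cdot\nabla_Y\psi-iY\cdot\nabla_X\psi,\psi\rangle=\tau|\psi|^2\,g(X,JY).
\]
Both sides are bilinear and antisymmetric in $(X,Y)$, so it suffices to verify the case $X=e_1$, $Y=e_2$. A short Clifford computation starting from $D\psi=e_1\cdot\nabla_{e_1}\psi+e_2\cdot\nabla_{e_2}\psi$ gives the key identity $e_1\cdot\nabla_{e_2}\psi-e_2\cdot\nabla_{e_1}\psi=-\omega\cdot D\psi$. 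Multiplying by $i$, substituting the Dirac equation and using $\omega^2=-1$, I obtain $-i\omega\cdot D\psi=-H\omega\cdot\psi-\tau\psi$; pairing with $\psi$ via $\langle\cdot,\cdot\rangle$ kills the first term by (i) and produces $-\tau|\psi|^2$, which matches $\tau|\psi|^2 g(e_1,Je_2)$ under the orientation convention $Je_1=e_2$, $Je_2=-e_1$.

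The only genuine obstacle is bookkeeping of signs: one must fix consistent conventions for the Clifford squares $e_i\cdot e_i$, for the skew-symmetry rules of $X\cdot$ and $\omega\cdot$ with respect to $\langle\cdot,\cdot\rangle$, and for $J$. Once these are pinned down as in Section \ref{sec corr}, both parts collapse, after one substitution of the Dirac equation, to the single vanishing identity $\langle\omega\cdot\psi,\psi\rangle=0$.
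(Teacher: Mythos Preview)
Your argument is correct. The trace computation is immediate once one notes $g(e_i,Je_i)=0$ and $\langle\omega\cdot\psi,\psi\rangle=0$, and for symmetry the key identity $e_1\cdot\nabla_{e_2}\psi-e_2\cdot\nabla_{e_1}\psi=-\omega\cdot D\psi$ (valid since $\omega^2=-1$ in the Riemannian Clifford algebra) reduces everything, after one substitution of \eqref{dirac_lkt}, to the same vanishing $\langle\omega\cdot\psi,\psi\rangle=0$. The sign check $g(e_1,Je_2)=-1$ under $Je_1=e_2$ is consistent with the paper's convention that $J$ is the rotation by $+\pi/2$.

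There is nothing to compare against in the paper itself: the lemma is stated without proof and the reader is referred to \cite{TBZJ}. Your direct computation is the natural route and, in particular, shows that the normalization $|\psi^+|^2-|\psi^-|^2=1$ plays no role in the proof beyond guaranteeing $|\psi|^2>0$ so that the defining formula for $S$ makes sense.
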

	\begin{proof}[Proof of Theorem \ref{killing_dirac_lkt}.]The proof of $(2)\Rightarrow (1)$ readily follows from the definition of the Dirac operator and we omit it. If $\psi\in \Gamma(\Sigma M)$ is a solution of equation \eqref{dirac_lkt} with $\vert \psi^+ \vert^2-\vert \psi^- \vert^2=1$ then $(\frac{i}{\vert \psi \vert}\psi,\frac{i}{\vert  \psi \vert} e_1\cdot \psi, \frac{i}{\vert  \psi \vert} e_2\cdot \psi, \frac{i}{\vert  \psi \vert}  e_1\cdot e_2 \cdot \psi )$ is an orthonormal basis of $\Sigma M$ with respect to $\langle\ ,\ \rangle$ and therefore we have
		\begin{align}\label{nablae1psi}
		\nabla_{e_1}\psi&=\frac{1}{\vert \psi \vert^2}	\langle \nabla_{e_1}\psi, i\psi\rangle i\psi + \frac{1}{\vert \psi \vert^2}\langle \nabla_{e_1}\psi, i e_1\cdot \psi \rangle i e_1\cdot \psi \\
		&+ \frac{1}{\vert \psi \vert^2} \langle \nabla_{e_1}\psi, i e_2\cdot \psi \rangle ie_2\cdot \psi + \frac{1}{\vert \psi \vert^2} \langle \nabla_{e_1}\psi,  i  e_1\cdot e_2 \cdot \psi \rangle i e_1\cdot e_2 \cdot \psi.\nonumber
		\end{align}
		We first note that, for $k=1,2,$ $\langle\nabla_{e_k}\psi,ie_1\cdot e_2\cdot\psi\rangle=0.$ This is a direct consequence of \eqref{psibarrapsi} with $|\psi^+|^2-|\psi^-|^2=1$ and elementary properties of $\langle.,.\rangle.$ We then deduce that $\langle\nabla_{e_1}\psi,i\psi\rangle=0$ using the Dirac equation \eqref{dirac_lkt}:
		\begin{eqnarray*}
			\langle \nabla_{e_1}\psi, i\psi\rangle&=&\langle iH\psi + - i\tau e_1\cdot \omega \cdot \psi +e_1\cdot
			\nabla_{e_1}\psi, ie_1 \cdot \psi\rangle\\
			&=&-\langle D\psi - e_1\cdot \nabla_{e_1}\psi, i e_1 \cdot \psi\rangle\\
			&=&-\langle e_2\cdot \nabla_{e_2}\psi, i e_1 \cdot \psi\rangle 
			\\
			&=&-\langle \nabla_{e_2}\psi, i  e_1\cdot e_2 \cdot \psi\rangle\\
			&=&0.
		\end{eqnarray*}
		So, the expression \eqref{nablae1psi} of $\nabla_{e_1}\psi$ reduces to \begin{equation*}
		\nabla_{e_1}\psi= \frac{1}{\vert \psi \vert^2}(\langle \nabla_{e_1}\psi, i e_1\cdot \psi \rangle i e_1\cdot \psi + \langle \nabla_{e_1}\psi, i e_2\cdot \psi \rangle ie_2\cdot \psi)
		\end{equation*} and from Lemma \ref{operador_forma_lkt} we have that
		\begin{eqnarray*}
			\nabla_{e_1}\psi&=& \frac{i}{\vert \psi \vert^2}\left(\langle i e_1\cdot   \nabla_{e_1}\psi,  \psi \rangle  e_1  + \langle i  e_2\cdot  \nabla_{e_1}\psi, \psi \rangle e_2\right)\cdot \psi \\
			&=&   \frac{i}{2}\left( \langle S(e_1), e_1\rangle e_1+( \langle S(e_1), e_2\rangle+\tau g(e_2,J(e_1))) e_2\right)  \cdot \psi\\
			&=& \frac{i}{2}S(e_1)\cdot \psi - \frac{i}{2}\tau e_1\cdot e_1 \cdot e_2 \cdot \psi\\
			&=& \frac{i}{2}S(e_1)\cdot \psi - \frac{i}{2}\tau e_1\cdot\omega \cdot \psi.
		\end{eqnarray*}
		We prove in a similar way that $\nabla_{e_2}\psi=\frac{i}{2}S(e_2)\cdot \psi - \frac{i}{2}\tau e_2\cdot\omega \cdot \psi.$ 
	\end{proof}
	As a consequence of Theorems \ref{inmersion_lkt} and \ref{killing_dirac_lkt} we have the following proposition.
	\begin{prop}\label{inmersion_lkt_dirac}
		If $(M,g)$ is a simply connected Riemmanian surface then the following statements are equivalent:
		\begin{enumerate}
			\item There exists $\psi \in \Gamma(\Sigma M)$ with $\vert \psi ^+\vert^2-\vert \psi ^-\vert^2=1$ solution of the equation
			$$D\psi= -iH\psi + i\tau \omega \cdot \psi.$$
			\item There exists an isometric immersion of mean curvature $H$ of $M$ into  $\mathbb{L}(\kappa, \tau )$, where $\kappa +4\tau^2=0$.
		\end{enumerate}
	\end{prop}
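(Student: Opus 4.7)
The plan is to chain Theorems \ref{killing_dirac_lkt} and \ref{inmersion_lkt}, exploiting that the special relation $\kappa+4\tau^2=0$ causes one term in the intrinsic immersion equation to drop out. The key algebraic observation is that under this relation one has $\sigma=\kappa/(2\tau)=-2\tau$ (when $\tau\neq 0$; and trivially when $\tau=\kappa=0$, in which case $\mathbb{L}(\kappa,\tau)=\mathbb{R}^3_1$), so $\sigma+2\tau=0$ and the coefficient $\langle X,T\rangle(\sigma+2\tau)(iT+\nu)$ appearing in \eqref{intrinseca_lkt} vanishes identically. Consequently equation \eqref{intrinseca_lkt} collapses to
\begin{equation*}
\nabla_X\psi=\tfrac{i}{2}S(X)\cdot\psi-\tfrac{i}{2}\tau X\cdot\omega\cdot\psi,
\end{equation*}
which is precisely the Killing-type equation \eqref{killing_lkt} featured in Theorem \ref{killing_dirac_lkt}. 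Crucially, the auxiliary data $T,\nu$ no longer appear in the equation.

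Granted this reduction, both implications follow by composition. For $(2)\Rightarrow(1)$, Theorem \ref{inmersion_lkt} furnishes $\psi\in\Gamma(\Sigma M)$ with $|\psi^+|^2-|\psi^-|^2=1$ satisfying \eqref{intrinseca_lkt}, which in our degenerate case is \eqref{killing_lkt}, and with $S$ the shape operator of the immersion, hence $\tfrac{1}{2}\operatorname{tr}S=H$; the direction $(2)\Rightarrow(1)$ of Theorem \ref{killing_dirac_lkt} (which is the easy direction, obtained by applying Clifford multiplication and summing) then yields the Dirac equation $D\psi=-iH\psi+i\tau\omega\cdot\psi$. For $(1)\Rightarrow(2)$, Theorem \ref{killing_dirac_lkt} produces from a solution $\psi$ of the Dirac equation a symmetric operator $S$ with $\tfrac{1}{2}\operatorname{tr}S=H$ such that $\psi$ solves \eqref{killing_lkt}; one then invokes Theorem \ref{inmersion_lkt} to obtain an isometric immersion with shape operator $S$ and mean curvature $H$.

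The subtle point, and the one place where a real argument is needed beyond quoting the two theorems, is the invocation of Theorem \ref{inmersion_lkt} in the direction $(1)\Rightarrow(2)$: that theorem assumes a priori the existence of a pair $(T,\nu)\in\Gamma(TM)\times C^\infty(M)$ satisfying the compatibility system \eqref{comp_0}-\eqref{comp_2}. Because these data no longer feature in the Killing equation when $\sigma+2\tau=0$, they must be manufactured from $\psi$ itself. The natural construction uses the spinorial bilinear pairing $\langle\langle\cdot,\cdot\rangle\rangle$: setting $\varphi=\psi^*$, one defines a section of $TM\oplus E$ by $\langle\langle\underline{e}_3\cdot\varphi,\varphi\rangle\rangle$ (the spinorial image of the Killing field $E_3$ of $\mathbb{L}(\kappa,\tau)$) and reads off its tangential and normal components as $T$ and $\nu N$ respectively. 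Verifying \eqref{comp_0}-\eqref{comp_2} then amounts to differentiating this bilinear using Lemma \ref{compatibilidad_transpuesta} together with the extrinsic Killing equation \eqref{gauss_grupos}, in the same spirit as the computation performed in Lemma \ref{e0_constante} for the product $\mathbb{R}_-\times\mathbb{S}^2$. This verification is the main technical step; once it is in place, the proposition is a formal consequence of the two theorems.
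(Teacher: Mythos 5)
Your overall strategy matches the paper's, which also just chains Theorems \ref{killing_dirac_lkt} and \ref{inmersion_lkt}; you have correctly made explicit the key algebraic observation that $\kappa+4\tau^2=0$ gives $\sigma+2\tau=0$ so that \eqref{intrinseca_lkt} collapses to \eqref{killing_lkt}, and the direction $(2)\Rightarrow(1)$ is indeed a straightforward composition. You are also right to flag that the direction $(1)\Rightarrow(2)$ is not purely formal: Theorem \ref{inmersion_lkt} takes the pair $(T,\nu)$ satisfying \eqref{comp_0}--\eqref{comp_2} as a standing hypothesis, while Proposition \ref{inmersion_lkt_dirac} does not, so $(T,\nu)$ must be manufactured from $\psi$. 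The paper's ``as a consequence of Theorems \ref{inmersion_lkt} and \ref{killing_dirac_lkt}'' elides exactly this point, so spotting it is a genuine contribution on your part.

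However, the construction you sketch to fill the gap is circular as written. The section $\underline{e}_3=T+\nu N$ is defined (in Remark \ref{compatibilidad_coordenadas}) in terms of the very $T,\nu$ you are trying to produce; and in any case $\langle\langle\underline{e}_3\cdot\varphi,\varphi\rangle\rangle$ is an element of $\g\subset Cl(\g)$ (it equals $Ad([\varphi]^{-1})[\underline{e}_3]$), not a section of $TM\oplus E$, so one cannot ``read off tangential and normal components'' from it. What you want is the inverse direction: since $\varphi:=\psi^*\in\Gamma(U\Sigma)$, the map $\xi:TM\oplus E\to M\times\g$, $\xi(X):=\langle\langle X\cdot\varphi,\varphi\rangle\rangle$, is a metric-preserving bundle isomorphism, and one should set $T+\nu N:=\xi^{-1}(e_3^o)$. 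Condition \eqref{comp_0} is then immediate because $\xi$ is an isometry, while \eqref{comp_1}--\eqref{comp_2} follow by differentiating $\langle\langle(T+\nu N)\cdot\varphi,\varphi\rangle\rangle=e_3^o$ (constant) with Lemma \ref{compatibilidad_transpuesta} and the Killing equation \eqref{killing_lkt}, in the spirit of Lemma \ref{e0_constante}. You announce this verification as ``the main technical step'' but do not perform it, so the proposal is a correct blueprint with one misstated construction and one missing computation rather than a complete proof.
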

	
	Now we prove that CMC surfaces in $\R^{1,2}$ naturally correspond to CMC surfaces in $\mathbb{H}^3_1.$ Since $\R^{1,2}$ and  $\mathbb{H}^3_1$ are $\mathbb{L}(\kappa,\tau)$ spaces, this correspondence is similar to the Lawson type correspondences in the $\mathbb{E}(\kappa,\tau)$ spaces found by Daniel in \cite{BD}.
	
	\begin{prop}\label{correspondencia_minkowski_sitter}
		Given an orientable Riemannian surface $(M,g)$ and for each $H_1\in (-\infty,-1]\cup [1,\infty)$ there exists a correspondence between the spinor fields $\psi_1$ of $\Sigma M$ with $\vert \psi_1^+\vert ^ 2 - \vert \psi_1^-\vert ^ 2=1$ which satisfy
		\begin{equation}\label{prop dirac corresp}
		D\psi_1=-iH_1\psi_1
		\end{equation}
		and the spinor fields $\psi_2$ with $\vert \psi_2^+\vert ^ 2 - \vert \psi_2^-\vert ^ 2=1$ which satisfy
		\begin{equation}\label{terminar}
		D\psi_2=-iH_2\psi_2+i\omega \cdot \psi_2
		\end{equation}
		for some $H_2 \in\R$. 
	\end{prop}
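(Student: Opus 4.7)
My plan is to exhibit the correspondence explicitly as Clifford multiplication by a constant rotation $e^{\alpha\omega/2}$. Given $H_1$ with $|H_1|\geq 1$, I would choose an angle $\alpha\in\mathbb{R}$ such that $\sin\alpha=1/H_1$ (possible precisely because $|H_1|\geq 1$), set $H_2:=H_1\cos\alpha$, and define the map $\psi_1\mapsto\psi_2:=e^{\alpha\omega/2}\cdot\psi_1$, where $e^{\alpha\omega/2}=\cos(\alpha/2)+\sin(\alpha/2)\,\omega$ is well defined on $\Sigma M$ since $\omega^2=-1$ there.

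The key computation is how $D$ interacts with Clifford multiplication by $e^{\alpha\omega/2}$. Since $\omega$ is parallel and $\alpha$ constant, one has $\nabla_X\psi_2=e^{\alpha\omega/2}\cdot\nabla_X\psi_1$. In dimension $2$, $\omega$ anti-commutes with tangent vectors, so $X\cdot e^{\alpha\omega/2}=e^{-\alpha\omega/2}\cdot X$ for all $X\in TM$. Combining these two identities yields
\[ D\psi_2 \;=\; e^{-\alpha\omega/2}\cdot D\psi_1. \]
Substituting $D\psi_1=-iH_1\psi_1$ and using $e^{-\alpha\omega/2}\cdot\psi_1=e^{-\alpha\omega}\cdot\psi_2=(\cos\alpha-\sin\alpha\,\omega)\cdot\psi_2$ gives
\[ D\psi_2 \;=\; -iH_1\cos\alpha\,\psi_2 \;+\; iH_1\sin\alpha\,\omega\cdot\psi_2, \]
which, with the choice $H_1\sin\alpha=1$ and $H_2=H_1\cos\alpha$, is exactly $-iH_2\psi_2+i\omega\cdot\psi_2$.

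For the normalisation, I would observe that $\omega$ acts as $+i$ on $\Sigma^+M$ and $-i$ on $\Sigma^-M$, so $e^{\alpha\omega/2}$ acts as the pure phases $e^{\pm i\alpha/2}$ on these two summands; it therefore preserves the splitting $\Sigma M=\Sigma^+M\oplus\Sigma^-M$ and the quantity $|\psi^+|^2-|\psi^-|^2=1$. The inverse correspondence $\psi_2\mapsto e^{-\alpha\omega/2}\cdot\psi_2$ is handled symmetrically: any $H_2\in\mathbb{R}$ determines $|H_1|=\sqrt{H_2^2+1}\geq 1$, hence a valid angle $\alpha$.

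I do not anticipate a genuine obstacle; the whole argument is pointwise Clifford algebra together with the parallelism of $\omega$ on the Riemannian surface. The only slightly subtle point is the identity $D(e^{\alpha\omega/2}\cdot\psi)=e^{-\alpha\omega/2}\cdot D\psi$, which depends on the anti-commutation $X\omega=-\omega X$ specific to even dimension, and the restriction $|H_1|\geq 1$ emerges naturally as the solvability condition for $\sin\alpha=1/H_1$.
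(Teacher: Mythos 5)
Your proposal is correct and is essentially the paper's own argument: the paper sets $\psi_2=a\cdot\psi_1$ with $a=\cos\theta+\sin\theta\,e_1\cdot e_2$ and $\sin 2\theta=1/H_1$, which is exactly your $e^{\alpha\omega/2}$ with $\alpha=2\theta$, yielding $H_2=\pm\sqrt{H_1^2-1}$. You supply slightly more detail (the intertwining identity for $D$, the preservation of $|\psi^+|^2-|\psi^-|^2$, and the inverse direction), but the mechanism is the same.
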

	\begin{proof}
		Let $\psi_1\in\Gamma(\Sigma M)$ be a solution of (\ref{prop dirac corresp}) such that $\vert \psi_1^+\vert ^ 2 - \vert \psi_1^-\vert ^ 2=1.$ If $a=cos\theta + sen \theta e_1\cdot e_2$ and $\psi_2:=a\cdot \psi_1$ then
		\begin{eqnarray*}
			D\psi_2&=&-iH_1cos2\theta\cdot\psi_2 +i H_1sen 2\theta e_1\cdot e_2\cdot\psi_2.
		\end{eqnarray*}
		Taking $\theta\in\R$ such that $\sin 2\theta=1/H_1,$ the last equation is equivalent to
		\begin{equation*}
		D\psi_2=-i (\pm \sqrt{H_1^2-1})\psi_2+i\omega \cdot \psi_2
		\end{equation*}
		which is of the form \eqref{terminar}.
	\end{proof}

	As a consequence of Propositions \ref{inmersion_lkt_dirac} and \ref{correspondencia_minkowski_sitter} we have the following result.
	
	\begin{cor}\label{coro_correspondabnce_r12_h13}
		For $H_1\in (-\infty,-1]\cup[1,\infty)$ there exists a correspondence between immersions of mean curvature $H_1$ of $M$ in $\R^{1,2}$ and immersions of mean curvature $\pm \sqrt{H_1^2-1}$ of $M$ in $\mathbb{H}_1^3$.
	\end{cor}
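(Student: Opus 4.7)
The plan is to obtain the corollary as a direct consequence of the two preceding propositions, by a double application of Proposition~\ref{inmersion_lkt_dirac} and then invocation of Proposition~\ref{correspondencia_minkowski_sitter}. The first step is to recognize both ambient spaces as instances of $\mathbb{L}(\kappa,\tau)$ with $\kappa+4\tau^{2}=0$: for $\R^{1,2}$ one takes $\kappa=\tau=0$, and for $\mathbb{H}_1^{3}$ one takes $\kappa=-4$, $\tau=1$ (as was observed in the paragraph following Theorem~\ref{anti_de_sitter_como_grupo}). This is precisely the regime where Proposition~\ref{inmersion_lkt_dirac} is applicable.

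Next, I would translate both immersion problems into spinorial Dirac equations. Applying Proposition~\ref{inmersion_lkt_dirac} with $\tau=0$, an isometric immersion of $M$ in $\R^{1,2}$ of mean curvature $H_1$ is equivalent to the existence of a spinor field $\psi_1\in\Gamma(\Sigma M)$ with $|\psi_1^{+}|^{2}-|\psi_1^{-}|^{2}=1$ satisfying $D\psi_1=-iH_1\psi_1$. Similarly, applying the same proposition with $\tau=1$, an isometric immersion of $M$ in $\mathbb{H}_1^{3}$ of mean curvature $H_2$ is equivalent to the existence of $\psi_2\in\Gamma(\Sigma M)$ with $|\psi_2^{+}|^{2}-|\psi_2^{-}|^{2}=1$ satisfying $D\psi_2=-iH_2\psi_2+i\omega\cdot\psi_2$.

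The final step is to chain these equivalences through Proposition~\ref{correspondencia_minkowski_sitter}, which provides a bijective correspondence between spinor fields of the first type and spinor fields of the second type, precisely for $H_1\in(-\infty,-1]\cup[1,\infty)$ (this range is exactly what makes $\sin 2\theta=1/H_1$ solvable in $\theta$), and which identifies the mean curvatures by $H_2=\pm\sqrt{H_1^{2}-1}$.

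The main step requiring genuine argument (Proposition~\ref{correspondencia_minkowski_sitter}) has already been established, and the only subtlety in the corollary itself is verifying the range condition on $H_1$. Thus the proof reduces to the straightforward composition of three equivalences, and there is no significant obstacle beyond carefully tracking the identifications $(\kappa,\tau)=(0,0)$ and $(\kappa,\tau)=(-4,1)$ in Proposition~\ref{inmersion_lkt_dirac}.
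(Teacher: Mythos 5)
Your proposal is correct and matches the paper's (implicit) proof exactly: the corollary is stated in the paper as an immediate consequence of Propositions~\ref{inmersion_lkt_dirac} and~\ref{correspondencia_minkowski_sitter}, and you have correctly supplied the details of that chaining, including the identifications $(\kappa,\tau)=(0,0)$ for $\R^{1,2}$ and $(\kappa,\tau)=(-4,1)$ for $\mathbb{H}_1^3$ and the observation that the range condition on $H_1$ is exactly what makes $\sin 2\theta = 1/H_1$ solvable.
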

	
	\subsection{Correspondence between minimal surfaces in $\R^3$ and maximal surfaces in $\R^{1,2}$}\label{sec_calabi}
	By \cite{TFI}, a minimal immersion of a Riemannian surface in $\R^3$ is characterized by the existence of a spinor field $\psi_1$ such that
	\begin{eqnarray}\label{diracr3}
	\vert \psi_1^+\vert^2+\vert \psi_1^-\vert^2=1&\text{and}&D\psi_1=0.
	\end{eqnarray}
	
	Meanwhile, Proposition \ref{inmersion_lkt_dirac} implies that a maximal immersion of a Riemannian surface in $\R^{1,2}$ is given by a spinor field $\psi_2$ such that 
	\begin{eqnarray}\label{diracr12}
	\vert \psi_2^+\vert^2-\vert \psi_2^-\vert^2=1&\text{and}&D\psi_2=0.
	\end{eqnarray}
	\begin{prop}\label{calabi_espinores}
		Let $(M,g)$ be an orientable Riemannian surface. A solution $\psi_1\in\Gamma(\Sigma M)$ of \eqref{diracr3} such that
		\begin{equation}\label{psi1 positive}
		\vert \psi_1^+\vert^2-\vert \psi_1^-\vert^2>0
		\end{equation}
		naturally corresponds to a solution $\psi_2\in\Gamma(\Sigma \overline{M})$ of \eqref{diracr12}, where $\Sigma \overline{M}$ is a spinor bundle conformal to $\Sigma M.$
	\end{prop}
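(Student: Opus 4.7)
The plan is to exploit the conformal covariance of the Dirac operator in dimension two. Given a solution $\psi_1 \in \Gamma(\Sigma M)$ of $D\psi_1 = 0$ with $|\psi_1^+|^2 + |\psi_1^-|^2 = 1$, the quantity $u := \frac{1}{2}\log(|\psi_1^+|^2 - |\psi_1^-|^2)$ is a well-defined smooth function on $M$ thanks to the positivity assumption \eqref{psi1 positive}. I would then introduce the conformally deformed surface $\overline{M} = (M, \tilde{g})$ with $\tilde{g} = e^{2u} g$ and define $\Sigma \overline{M}$ as the corresponding spinor bundle, which is canonically identified with $\Sigma M$ as a complex vector bundle via the standard conformal identification of spin structures.

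Next I would use the classical conformal covariance formula for the Dirac operator in dimension $n=2$: if $\widetilde{\psi}$ denotes the image of $\psi$ under the canonical bundle identification $\Sigma M \to \Sigma \overline{M}$, then $\tilde{D}(e^{-u/2}\widetilde{\psi}) = e^{-3u/2}\,\widetilde{D\psi}$. Setting $\psi_2 := e^{-u/2}\widetilde{\psi_1}$ one immediately gets $\tilde{D}\psi_2 = 0$ from $D\psi_1 = 0$. The Hermitian inner product and the positive/negative decomposition $\Sigma = \Sigma^+ \oplus \Sigma^-$ are preserved by the identification (the volume element, which defines the splitting, is conformally invariant up to a positive factor that cancels in $ie_1\cdot e_2$), so that $|\psi_2^{\pm}|^2_{\tilde g} = e^{-u}|\psi_1^{\pm}|^2_g$, and hence
\begin{equation*}
|\psi_2^+|^2 - |\psi_2^-|^2 = e^{-u}\left(|\psi_1^+|^2 - |\psi_1^-|^2\right) = e^{-u}\cdot e^{u} = 1,
\end{equation*}
by our choice of $u$. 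Thus $\psi_2$ is a solution of \eqref{diracr12} on $\Sigma \overline{M}$, which is conformal to $\Sigma M$ by construction.

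The reverse direction proceeds analogously: starting from $\psi_2$ on $\Sigma \overline{M}$ satisfying \eqref{diracr12}, I would set $v := -\frac{1}{2}\log(|\psi_2^+|^2 + |\psi_2^-|^2)$, which is globally defined since $|\psi_2|^2 \geq |\psi_2^+|^2 - |\psi_2^-|^2 = 1 > 0$, then conformally change back via $g = e^{2v}\tilde{g}$ and define $\psi_1 := e^{-v/2}\widetilde{\psi_2}$; the same covariance computation produces a harmonic spinor with $|\psi_1|^2 = 1$ and automatically with $|\psi_1^+|^2 - |\psi_1^-|^2 > 0$ (since this is $e^{-v}(|\psi_2^+|^2 - |\psi_2^-|^2) = e^{-v} > 0$), which recovers \eqref{psi1 positive}.

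The main subtle point I expect is not the conformal covariance formula itself (which is standard) but the precise compatibility between the $\pm$-decompositions on $\Sigma M$ and $\Sigma \overline{M}$: one must verify that the identification intertwines the chiralities, equivalently that the Clifford action of $ie_1 \cdot e_2$ in $(M,g)$ corresponds to the Clifford action of $i\tilde{e}_1 \cdot \tilde{e}_2$ in $(M,\tilde{g})$ with $\tilde{e}_i = e^{-u}e_i$. This is essentially a bookkeeping check in the conformal identification of spin structures, and once it is in place the two norm identities fall out immediately, giving the bijective correspondence claimed in the proposition.
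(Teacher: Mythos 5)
Your approach is the same as the paper's — rescale the metric conformally and invoke the conformal covariance of the Dirac operator in dimension two — but the choice of conformal factor is off by a factor of two in the exponent, which makes the normalization computation fail. You define $u := \frac{1}{2}\log(|\psi_1^+|^2 - |\psi_1^-|^2)$, so that $e^{2u} = |\psi_1^+|^2 - |\psi_1^-|^2$, and set $\psi_2 := e^{-u/2}\widetilde{\psi_1}$. Since the conformal identification of spinor bundles is a pointwise isometry, $|\psi_2^\pm|^2 = e^{-u}|\psi_1^\pm|^2$, and therefore
\begin{equation*}
|\psi_2^+|^2 - |\psi_2^-|^2 \;=\; e^{-u}\bigl(|\psi_1^+|^2 - |\psi_1^-|^2\bigr) \;=\; e^{-u}\,e^{2u} \;=\; e^{u} \;=\; \bigl(|\psi_1^+|^2-|\psi_1^-|^2\bigr)^{1/2},
\end{equation*}
which is not identically $1$. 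Your displayed computation writes $e^{-u}\cdot e^{u}=1$ in the last step, implicitly replacing $e^{2u}$ by $e^u$; this is an arithmetic slip. The correct choice is $u=\log(|\psi_1^+|^2-|\psi_1^-|^2)$, equivalently $\tilde g = (|\psi_1^+|^2 - |\psi_1^-|^2)^2\,g$ and $\psi_2 = (|\psi_1^+|^2-|\psi_1^-|^2)^{-1/2}\,\widetilde{\psi_1}$, which is precisely what the paper's proof uses; then $e^{-u}(|\psi_1^+|^2 - |\psi_1^-|^2)=1$ holds on the nose. The same miscalibration (together with an extraneous sign) appears in your reverse direction: you take $v=-\frac{1}{2}\log(|\psi_2^+|^2+|\psi_2^-|^2)$, whereas $|\psi_1^+|^2 + |\psi_1^-|^2 = e^{-v}(|\psi_2^+|^2+|\psi_2^-|^2)=1$ requires $v=\log(|\psi_2^+|^2+|\psi_2^-|^2)$. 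Everything else you wrote — the two-dimensional covariance formula $\tilde D(e^{-u/2}\widetilde{\psi})=e^{-3u/2}\widetilde{D\psi}$, and the remark that the conformal identification respects the $\pm$-splitting because $ie_1\cdot e_2$ is intertwined with $i\tilde e_1\cdot\tilde e_2$ for $\tilde e_i=e^{-u}e_i$ — is the right content, and once the exponent is corrected your argument coincides with the paper's proof.
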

	\begin{rem}
	 It is well known that the components of $\psi_1$ in \eqref{diracr3} depend on the Gauss map of $M$ in $\R^3;$ by explicit calculations, it is easy to show that (\ref{psi1 positive}) means that the Gauss map of $M$ takes values in the lower hemisphere of $\mathbb{S}^2.$
	\end{rem}
	\begin{proof}
		Let $\psi_1\in\Gamma(\Sigma M)$ be a solution of \eqref{diracr3}. We define $\psi_2$ in terms of $\psi_1$ as
		$$\psi_2:=\frac{1}{\small{\sqrt{\vert \psi_1^+\vert^2-\vert \psi_1^-\vert^2}}}\ \widetilde{\psi_1},$$
		where $\widetilde{\psi_1}$ is the spinor field $\psi_1$ in the spinor bundle of $M$ associated to the conformal metric $(\vert \psi_1^+\vert^2-\vert \psi_1^-\vert^2)^2g$. Then $\vert \psi_2^+\vert^2 -\vert \psi_2^-\vert^2=1,$ and the conformal invariance of the Dirac operator (see \cite[Pag. 69]{BHMM}) implies (\ref{diracr12}). 
	\end{proof}

	We directly deduce the following corollary:
	
	\begin{cor}\label{coro_correspondencia_calabi}
		There exists a conformal correspondence between minimal surfaces in $\R^3$ with Gauss map image in an hemisphere and maximal surfaces in $\R^{1,2}$.
	\end{cor}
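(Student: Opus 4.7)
The plan is to assemble the corollary from the three previous results of the section: Friedrich's spinorial characterization of minimal immersions in $\R^{3}$ (recalled in \eqref{diracr3}), Proposition \ref{inmersion_lkt_dirac} specialized to $(\kappa,\tau)=(0,0)$ (which gives that maximal immersions in $\R^{1,2}$ correspond to spinors with $D\psi_2=0$ and $|\psi_2^+|^2-|\psi_2^-|^2=1$), and Proposition \ref{calabi_espinores}, which realizes the spinorial change of data via a conformal rescaling.

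First I would start with a minimal immersion $F_1:M\to\R^{3}$ whose Gauss map takes values in an open hemisphere of $\mathbb{S}^{2}.$ By \cite{TFI} there exists $\psi_1\in\Gamma(\Sigma M)$ with $|\psi_1^+|^2+|\psi_1^-|^2=1$ and $D\psi_1=0;$ by the remark following Proposition \ref{calabi_espinores}, the hemisphere assumption translates pointwise into the sign condition $|\psi_1^+|^2-|\psi_1^-|^2>0$ (up to replacing $\psi_1$ by $\overline{\psi_1}$ if the image lies in the upper hemisphere). Proposition \ref{calabi_espinores} then produces a spinor $\psi_2$ on the conformally rescaled surface $\overline{M}=\bigl(M,(|\psi_1^+|^2-|\psi_1^-|^2)^2 g\bigr)$ satisfying \eqref{diracr12}, and Proposition \ref{inmersion_lkt_dirac} applied with $H=0,$ $\tau=0,$ $\kappa=0$ produces the desired maximal immersion $F_2:\overline{M}\to\R^{1,2}.$

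For the converse I would proceed symmetrically. Starting from a maximal immersion $F_2:M\to\R^{1,2},$ Proposition \ref{inmersion_lkt_dirac} furnishes $\psi_2\in\Gamma(\Sigma M)$ with $|\psi_2^+|^2-|\psi_2^-|^2=1$ and $D\psi_2=0,$ and in particular $|\psi_2^+|^2+|\psi_2^-|^2>0.$ One then sets $\psi_1:=(|\psi_2^+|^2+|\psi_2^-|^2)^{-1/2}\,\widetilde{\psi_2}$ in the spinor bundle of $M$ endowed with the conformal metric $(|\psi_2^+|^2+|\psi_2^-|^2)^{2}g.$ The conformal invariance of the Dirac operator (\cite[p.~69]{BHMM}) gives $D\psi_1=0,$ and by construction $|\psi_1^+|^2+|\psi_1^-|^2=1,$ so \eqref{diracr3} together with \cite{TFI} yields a minimal immersion into $\R^{3}.$ The sign condition $|\psi_1^+|^2-|\psi_1^-|^2>0$ is preserved under this rescaling, hence the resulting Gauss map image lies in a hemisphere.

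The only genuinely non-trivial ingredient is the conformal invariance of $D$ used in Proposition \ref{calabi_espinores} (and its converse above), together with the identification between the hemisphere condition on the Gauss map and the sign condition on $|\psi_1^+|^2-|\psi_1^-|^2;$ both are classical, so the corollary follows as a direct assembly of \eqref{diracr3}, Proposition \ref{inmersion_lkt_dirac}, and Proposition \ref{calabi_espinores}, with the resulting correspondence being conformal by construction.
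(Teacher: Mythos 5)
Your proposal is correct and follows the same route the paper intends: the corollary is stated as a direct consequence of \eqref{diracr3}, Proposition \ref{inmersion_lkt_dirac} with $\kappa=\tau=0$, and Proposition \ref{calabi_espinores}, together with the remark identifying the sign condition $|\psi_1^+|^2-|\psi_1^-|^2>0$ with the hemisphere condition on the Gauss map. You merely make explicit the converse direction of Proposition \ref{calabi_espinores} (reversing the conformal rescaling, which is indeed the inverse factor since $|\psi_2^+|^2+|\psi_2^-|^2 = (|\psi_1^+|^2-|\psi_1^-|^2)^{-1}$ on $\overline{M}$), which the paper leaves implicit in the phrase ``naturally corresponds''.
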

	\begin{rem}
		It can be shown by explicit calculations that the correspondence $\psi_1\mapsto\psi_2$ corresponds to the transformation
		$$\int(\Phi_1,\Phi_2,\Phi_3)dz\mapsto \int(i\Phi_1,i\Phi_2,\Phi_3)dz$$
		on Weierstrass data of immersions in $\R^3$ and in $\R^{1,2}$ (details are carried out in \cite[Sec 9.2]{TBZJ}). This is the correspondence described in \cite{LLS}. We point out that for the general correspondence no hypotheses on the Gauss map image of the minimal surface is needed, see  \cite{LLS}.
	\end{rem}

	\appendix
	\section{Identification of spinor bundles}\label{spinor_bundle_identification}
	We consider a simply connected pseudo-Riemmanian surface $M$ and the trivial bundle $E=M\times \R \longrightarrow M$ with metric $\mp d\nu ^2$ depending on whether $M$ is Riemannian or Lorentzian and let us denote the spin structures of $TM$ and $E$ by $\widetilde{Q}_M$ and $\widetilde{Q}_E$ respectively. We identify here the spinor bundle $\Sigma M$ with a subbundle of the bundle $\Sigma=\left(\widetilde{Q}_{_M} \times_M \widetilde{Q}_{_E}\right)\times_\rho Cl_{1,2}$ defined by \eqref{haz_prod_tensorial_riem}.
	
	Let us begin with the Riemannian case. We define the complexified quaternions as
	\begin{equation*}
	\mathbb{H}^{\mathbb{C}}:=\mathbb{H}\otimes_{\R}\C=\{z_0+z_1I+z_2J+z_3K : z_i\in \mathbb{C}\}.
	\end{equation*}
	This algebra is endowed with the quadratic form $\langle q,q\rangle=z_0^2+z_1^2+z_2^2+z_3^2$, where $q=z_0+z_1I+z_2J+z_3K\in\mathbb{H}^{\mathbb{C}}$.
	
	\begin{rem}\label{rmk app cl12}
		Let $(e_0,e_1,e_2)$ be the standard basis of $\R^{1,2}$, which is such that $\langle e_i, e_j\rangle=0$ if $i\neq j$ and $\langle e_0, e_0\rangle=-1=-\langle e_1, e_1\rangle=-\langle e_2, e_2\rangle$. The map obtained by linearity from $e_0\longmapsto iI$,
		$e_1\longmapsto J$,
		$	e_2\longmapsto JI=-K$ is a Clifford application which induces an isomorphism of algebras between $Cl_{1,2}$ and $\mathbb{H}^{\mathbb{C}}$. Using this isomorphism, the even Clifford algebra $Cl_{1,2}^{0}$ is isomorphic to
		\begin{equation*}
		\{q_0+q_1I+iq_2J+iq_3K :  q_i\in \R \}.
		\end{equation*}
	\end{rem}
	
	We consider the following representations of the group $Spin(0,2)$ given by the left-multiplication:
	\begin{eqnarray*}
		\rho_1:Spin(0,2)\longrightarrow Gl(Cl_{0,2})& \text{and}& \rho_2:Spin(0,2)\subset Spin(1,2)\longrightarrow Gl(Cl_{1,2}^{0}).
	\end{eqnarray*}
	Here and below we use the models 
	$$Cl_{0,2}=\mathbb{H}\hspace{.5cm}\mbox{and}\hspace{.5cm}Spin(0,2)=\{q_0+q_1I: q_i\in\R,\ q_0^2+q_1^2=1\}.$$ 
	The representation $\rho_1$ is the standard spin representation (if we identify $\mathbb{H}\simeq \C^2$).
	The following lemma states that the representations $\rho_1$ and $\rho_2$ are equivalent.
	\begin{lem}\label{iso-rep}
		The isomorphism of vector spaces 
		\begin{eqnarray*}
			f  :\ \ \ Cl_{0,2}&\longrightarrow& Cl_{1,2}^{0} \\  \ \ \ \ \ \  q_0+q_1I+J(q_2-Iq_3)&\longmapsto& q_0+q_1I+iJ(q_2-Iq_3)
		\end{eqnarray*}
		induces a $\mathbb{C}$-linear isomorphism between the representations $\rho_1$ and $\rho_2$, where the complex structure in both spaces is given by the right multiplication by $I$.
	\end{lem}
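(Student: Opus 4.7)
The plan is to verify three properties of $f$: that it is a well-defined $\R$-linear bijection, that it is $\mathbb{C}$-linear with respect to the right-multiplication-by-$I$ complex structures on each side, and that it intertwines $\rho_1$ and $\rho_2$. The unifying observation is that $f$ respects the parity decomposition $Cl_{0,2}=Cl_{0,2}^0\oplus Cl_{0,2}^1=\R\langle 1,I\rangle\oplus\R\langle J,K\rangle$: it acts as the identity on the first summand and as multiplication by the scalar $i\in\mathbb{H}^{\mathbb{C}}$ on the second. Its image is exactly $\R\langle 1,I,iJ,iK\rangle$, which matches the description of $Cl_{1,2}^0$ given in the preceding remark, so well-definedness and bijectivity are immediate.

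For $\mathbb{C}$-linearity, I would note that right multiplication by $I$ preserves each summand: it is a rotation on $\R\langle 1,I\rangle$, and on $\R\langle J,K\rangle$ it reduces to the identities $J\cdot I=-K$ and $K\cdot I=J$, which hold identically in $Cl_{0,2}$ and in $Cl_{1,2}^0$ because both Clifford products are inherited from the multiplication in $\mathbb{H}^{\mathbb{C}}$. Since $i$ is a central scalar of $\mathbb{H}^{\mathbb{C}}$, multiplication by $i$ on the second summand commutes with right multiplication by $I$, yielding $f(v\cdot I)=f(v)\cdot I$.

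The equivariance step has the same flavor. An element $g=a_0+a_1I$ of $Spin(0,2)\subset Cl_{0,2}^0$ preserves both summands under left multiplication (using $I\cdot J=K$ and $I\cdot K=-J$), and the corresponding multiplications in $Cl_{0,2}$ and in $Cl_{1,2}^0$ agree because both are restrictions of the $\mathbb{H}^{\mathbb{C}}$-product and $Spin(0,2)$ is realized as the same subset $\{q_0+q_1I:q_0^2+q_1^2=1\}$ in both pictures. Writing $v=v_0+v_1$ with $v_0\in Cl_{0,2}^0$ and $v_1\in\R\langle J,K\rangle$, centrality of $i$ gives $f(gv)=gv_0+i(gv_1)=g(v_0+iv_1)=g\cdot f(v)$, as required.

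The main obstacle is not any deep step but rather the careful bookkeeping of the two embeddings $Cl_{0,2}\hookrightarrow\mathbb{H}^{\mathbb{C}}$ and $Cl_{1,2}\cong\mathbb{H}^{\mathbb{C}}$ described in the remark, so that the Clifford products and the subgroup $Spin(0,2)$ really coincide on the overlap $\R\langle 1,I\rangle$. Once that alignment is in place, the proof collapses to the two observations that left multiplication by $Spin(0,2)$ and right multiplication by $I$ both preserve the parity decomposition of $\mathbb{H}$, and that the scalar $i\in\mathbb{H}^{\mathbb{C}}$ commutes with everything.
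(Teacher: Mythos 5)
Your proof is correct and fills in the computation the paper leaves implicit (the paper simply states ``a computation shows that $f(gq)=gf(q)$''). Organizing the verification around the parity decomposition $Cl_{0,2}=\R\langle 1,I\rangle\oplus\R\langle J,K\rangle$ and the centrality of $i$ in $\mathbb{H}^{\mathbb{C}}$ is a clean way to see both the $Spin(0,2)$-equivariance and the $\mathbb{C}$-linearity at once.
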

	\begin{proof}
		A computation shows that $f(gq)=gf(q)$ $\forall g\in Spin(0,2),\forall q\in Cl_{0,2}.$
	\end{proof}
	Identifying $e_0$ with $iI$ (see Remark \ref{rmk app cl12}), the isomorphism $f$ satisfies the following:
	\begin{lem}\label{iden}
		If $x\in \R^2$ and $q \in \mathbb{H}$ then $f(x\cdot q)=ie_0\cdot x\cdot f(q)$,
		where $i$ is the complex structure in $Cl_{1,2}^0$ given by the right multiplication by $I$.
	\end{lem}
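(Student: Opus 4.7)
My plan is to verify the identity by a direct structural computation, using the bilinearity of both sides and the decomposition $\mathbb{H}=\C\oplus J\C$ with $\C=\mathrm{span}_\R(1,I)$. Both sides of $f(x\cdot q)=ie_0\cdot x\cdot f(q)$ are $\R$-bilinear in $(x,q)\in\R^2\times\mathbb{H}$, since Clifford multiplication is bilinear, $f$ is $\R$-linear, and the complex structure on $Cl_{1,2}^0$ is $\R$-linear; so it is enough to check the identity on each of the two summands, or, if one prefers, on a finite basis.

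Under the identification of Remark \ref{rmk app cl12}, $\C$ coincides with the even Clifford subalgebra $Cl_{0,2}^0\subset\mathbb{H}$ and $J\C$ with the odd part, which is exactly $\R^2$. From the formula in Lemma \ref{iso-rep}, $f$ restricts to the identity on $\C$ and to scalar multiplication by $i\in\C$ on $J\C$. I would then split the verification into two cases. For $q\in\C$ the product $xq$ lies in $J\C$, so $f(xq)=i(xq)$ (scalar); rewriting the right-hand side in $\mathbb{H}^{\mathbb{C}}$ using $e_0=iI$ and the prescription that the outer $i$ is right multiplication by $I$ on $Cl_{1,2}^0$, one obtains $(iI\cdot x\cdot q)\cdot I$, and since $I$ commutes with $\C$ and anticommutes with $J$ one checks that $IvI=v$ for every $v\in J\C$, so both sides collapse to $i(xq)$. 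For $q\in J\C$ we have $xq\in\C$, so $f(xq)=xq$ and $f(q)=iq$; an analogous manipulation reduces the right-hand side to $-I\cdot xq\cdot I$, which equals $xq$ because $xq\in\C$ commutes with $I$ and $I^2=-1$.

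The only real obstacle is the careful management of the two uses of the symbol $i$: in $e_0=iI$ and in $f(J)=iJ$, $f(K)=iK$ it denotes the scalar $i\in\C$ in $\mathbb{H}^{\mathbb{C}}=\mathbb{H}\otimes_\R\C$, whereas in the statement of the lemma it denotes the complex structure on $Cl_{1,2}^0$, which is right multiplication by $I$. Once one checks that $e_0\cdot x\cdot f(q)$ does lie in $Cl_{1,2}^0$ (automatic, since $e_0\cdot x$ and $f(q)$ are both even) so that the outer $i$ is well-defined, the remaining computations are one-line manipulations in $\mathbb{H}^{\mathbb{C}}$.
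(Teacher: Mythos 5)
The paper states Lemma \ref{iden} without giving a proof (the verification is left to the reader), so there is no argument to compare against; your proof is correct. Organizing the computation by the parity decomposition $\mathbb{H}=\C\oplus J\C$, on which $f$ acts as the identity and as scalar multiplication by $i$ respectively, and then using $e_0=iI$ together with the facts that $I$ commutes with $\C$ and anticommutes with $J\C$ (so that $IvI=v$ for $v\in J\C$ and $IwI=-w$ for $w\in\C$), reduces each case to a one-line identity; $\R$-bilinearity does the rest. This is exactly the kind of direct check the paper intends.
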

	Since $E$ is trivial the orthonormal frame bundle is $Q_E=M\times \{1\}$ and $\widetilde{Q}_{E}=M\times\{\pm 1\}$ is also trivial. We consider the global section $\widetilde{s}_E:M\longrightarrow \{\pm 1\}$ given by $m\longmapsto+1$. Considering the inclusion $\widetilde{Q}_M\longrightarrow \widetilde{Q}_M\times \widetilde{Q}_E$, $\widetilde{s}_M\longmapsto(\widetilde{s}_M,\widetilde{s}_E)$ and the isomorphism $f$ of Lemma \ref{iden} we get the bundle isomorphism
	\begin{eqnarray*}\label{id_riemanniana*}		\Sigma M:=	\overset{\sim}{Q}_{_M}\times_\rho \Sigma_2&\longrightarrow &\Sigma_0:=\left({\overset{\sim}{Q}_{_M}\times_{_M}\overset{\sim}{Q}_{_E}}\right) \times_\rho Cl_{1,2}^{ ^\circ}
		\\
		\psi:=\big [ \overset{\sim}{s}_{_M},q\big ]&\longmapsto& \psi^*:=\big [ (\overset{\sim}{s}_{_M},\overset{\sim}{s}_{_E}),f(q)\big ];\nonumber
	\end{eqnarray*}
	it satisfies
	$$
	(\nabla_{_X}\psi)^*=\nabla_{_X}\psi^*,\hspace{.3cm}
	(X\cdot_M\psi)^*=iN\cdot X\cdot \psi^*\hspace{.3cm}\mbox{and}\hspace{.3cm}
	|\psi^+|^2-|\psi^-|^2=\langle\langle\psi^*,\psi^*\rangle\rangle
	$$
	for all $X\in TM,$ where $N=[(\overset{\sim}{s}_M,\overset{\sim}{s}_E), e_0]$ (see \cite[Prop. 3.4.9]{TBZJ} for more details).
	
	Let us see now the Lorentzian case. Here the spinor bundle of $M$ is $\Sigma M=\widetilde{Q}_M\times_{\rho_1} Cl_{_{1,1}},$ where $\rho_1:Spin(1,1)\longrightarrow Gl(Cl_{1,1})$ is the representation given by left-multiplication. As above we have $\widetilde{Q}_E=M\times \{\pm 1\},$ the section $\overset{\sim}{s}_{_E}: m\mapsto +1$ of $\widetilde{Q}_E,$ and the inclusion $\overset{\sim}{Q}_{_M}\longrightarrow \overset{\sim}{Q}_{_M}\times \overset{\sim}{Q}_{_E}$, $\overset{\sim}{s}_{_M}\longmapsto (\overset{\sim}{s}_{_M},\overset{\sim}{s}_{_E}).$ Using the isomorphism $Cl_{1,1}\simeq Cl_{1,2}^{0}$ (induced by the Clifford application $\R^{1,1}\longrightarrow Cl _{12}^{0}$, $x\longmapsto x\cdot e_2$, where $(e_0,e_1,e_2)$ is the standard basis of $\R^{1,2}$) we obtain a bundle isomorphism
	\begin{eqnarray*}\label{iso_estrella_lorentz}
		\overset{\sim}{Q}_{_M}\times_{\rho_1} Cl_{11}&\longrightarrow& \left({\overset{\sim}{Q}_{_M}\times_{_M}\overset{\sim}{Q}_{_E}}\right) \times_\rho Cl_{1,2}^{0}\\
		\psi&\longmapsto & \psi^*;\nonumber\end{eqnarray*}
	it satisfies the properties
	$$(\nabla_{_X}\psi)^*=\nabla_{_X}\psi^*,\hspace{.3cm}
	(X\cdot_M\psi)^*=X\cdot N \cdot \psi^*\hspace{.3cm}\mbox{and}\hspace{.3cm}
	|\psi^+|^2-|\psi^-|^2=\langle\langle\psi^*,\psi^*\rangle\rangle$$
	for all $X\in TM,$ where $N=[(\overset{\sim}{s}_M,\overset{\sim}{s}_E), e_2]$ (see \cite[Prop. 3.4.16]{TBZJ} for more details).
	
	\section{Bivectors and linear operators}\label{apendiceB}
	We prove that a skew-symmetric operator $u:\R^{r,s}\longrightarrow \R^{r,s}$ is identified with a bivector $\underline{u}\in \Lambda^2(\R^{r,s})$. Let us consider the following bracket in the Clifford algebra $Cl_{r,s}$
	\begin{equation}
	[a,b]=\frac{1}{2}(a\cdot b-b\cdot a),
	\end{equation}
	for all $a,b\in Cl_{r,s}$.
	
	\begin{lem}\label{lemaB1}
		Let $u:\R^{r,s}\longrightarrow \R^{r,s}$ be a skew-symmetric operator. The bivector that represents
		$u$ is
		\begin{eqnarray}
		\underline{u}=\frac{1}{2}\sum_{j=1}^{r+s}\varepsilon_je_j\cdot u(e_j),& & \varepsilon_j=\langle e_j,e_j\rangle=\pm 1,
		\end{eqnarray}
		and for all $\xi\in \R^{r,s}$ we have
		\begin{eqnarray}\label{representacion_u}
		[\underline{u},\xi]=u(\xi).
		\end{eqnarray}
	\end{lem}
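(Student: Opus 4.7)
The plan is to verify the formula by direct computation in the Clifford algebra, exploiting the skew-symmetry of $u$ to simplify sums. I would start by fixing an orthonormal basis $(e_1,\dots,e_{r+s})$ of $\R^{r,s}$ with $\varepsilon_j=\langle e_j,e_j\rangle=\pm 1$, and writing $u(e_j)=\sum_l u^l_j\, e_l$, so that skew-symmetry of $u$ translates into the relation $\varepsilon_j u^l_j=-\varepsilon_l u^j_l$, and in particular $u^j_j=0$.

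First I would verify that $\underline{u}$ really is a bivector, i.e.\ lies in $\Lambda^2(\R^{r,s})\subset Cl_{r,s}$. Expanding,
\begin{equation*}
\underline{u}=\frac{1}{2}\sum_{j,l}\varepsilon_j u^l_j\, e_j\cdot e_l,
\end{equation*}
the diagonal $j=l$ drops out since $u^j_j=0$, and grouping the pairs $(j,l)$ and $(l,j)$ for $j<l$ together, the Clifford relation $e_l\cdot e_j=-e_j\cdot e_l$ combined with $\varepsilon_l u^j_l=-\varepsilon_j u^l_j$ collapses the sum to $\underline{u}=\sum_{j<l}\varepsilon_j u^l_j\, e_j\cdot e_l\in\Lambda^2(\R^{r,s})$.

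Next, I would prove \eqref{representacion_u} by linearity in $\xi$, reducing to the case $\xi=e_k$ for each $k$. Splitting the outer sum in the definition of $\underline u$ into the term $j=k$ and the terms $j\neq k$, I would compute each commutator $[e_j\cdot u(e_j),\, e_k]=\tfrac12\bigl(e_j\cdot u(e_j)\cdot e_k-e_k\cdot e_j\cdot u(e_j)\bigr)$ separately. For $j=k$, using $u^k_k=0$ and $e_k\cdot e_l\cdot e_k=\varepsilon_k e_l$ for $l\neq k$, one gets $e_k\cdot u(e_k)\cdot e_k=\varepsilon_k u(e_k)$ and hence $[e_k\cdot u(e_k),e_k]=\varepsilon_k u(e_k)$, contributing $\tfrac{1}{2}u(e_k)$ after multiplying by $\tfrac12\varepsilon_k$. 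For $j\neq k$, the product $e_j\cdot e_l\cdot e_k$ is symmetric in the outer two factors whenever $j,l,k$ are pairwise distinct, so the only surviving term in $u(e_j)=\sum_l u^l_j e_l$ is $l=k$, and this yields $[e_j\cdot u(e_j),e_k]=-\varepsilon_k u^k_j\, e_j$.

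Summing over $j\neq k$ and applying skew-symmetry $\varepsilon_k u^k_j=-\varepsilon_j u^j_k$, the coefficient of $e_j$ becomes $\tfrac12 u^j_k$, so the $j\neq k$ contribution is $\tfrac12\sum_{j\neq k}u^j_k e_j=\tfrac12 u(e_k)$, again using $u^k_k=0$. Adding the two contributions gives $[\underline u,e_k]=u(e_k)$, as required. The main (minor) obstacle is the bookkeeping of the coincidence cases among the three indices in each triple product $e_j\cdot e_l\cdot e_k$ and keeping the signs from $\varepsilon_j,\varepsilon_k$ consistent with the skew-symmetry relation; there is no conceptual difficulty beyond that.
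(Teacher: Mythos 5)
Your proof is correct. The index bookkeeping checks out: the skew-symmetry relation $\varepsilon_j u^l_j = -\varepsilon_l u^j_l$ is equivalent to the more common form $\varepsilon_l u^l_j = -\varepsilon_j u^j_l$ after multiplying through by $\varepsilon_j\varepsilon_l$, and both your $j=k$ and $j\neq k$ contributions to $[\underline u,e_k]$ are computed consistently with the paper's convention $e_j\cdot e_j = -\varepsilon_j$, summing to $u(e_k)$.

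Your route differs from the paper's in a small but genuine way. The paper argues on the ``elementary'' skew-symmetric operators: it fixes $i<j$, takes $u$ to be the operator corresponding to $\varepsilon_i e_i\wedge\varepsilon_j e_j$ (so $e_i\mapsto\varepsilon_j e_j$, $e_j\mapsto-\varepsilon_i e_i$, rest $\mapsto 0$), shows $\underline u=\varepsilon_i\varepsilon_j\,e_i\cdot e_j$ and verifies $[\underline u,e_k]=u(e_k)$ for each $k$; the general case then follows because both $u\mapsto\underline u$ and $u\mapsto[\underline u,\cdot]$ are linear in $u$, though the paper leaves that appeal to linearity implicit. You instead treat a general skew-symmetric $u$ at once, expanding in coordinates $u^l_j$ and splitting the commutator over $j=k$ and $j\neq k$. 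This costs you slightly heavier index bookkeeping but makes the argument self-contained (no reduction to a spanning set), and it also buys you the explicit verification that $\underline u$ lands in $\Lambda^2(\R^{r,s})$, which the paper takes for granted. Either approach is fine; yours is marginally more robust but a bit longer.
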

	\begin{proof} Let us consider the linear application $u:\R^{r,s}\longrightarrow \R^{r,s}$ given by $e_i\longmapsto \varepsilon_j e_j$ and $e_j\longmapsto -\varepsilon_i e_i$   if $i<j$ and $e_k\longmapsto 0$ if $k\neq i,j$ which corresponds to $\varepsilon_i e_i\wedge \varepsilon_j e_j \in \Lambda^2 \R^{r,s}$. This map is skew-symmetric and $\underline{u}=\varepsilon_i\varepsilon_je_i\cdot e_j=\frac{1}{2}\varepsilon_i\varepsilon_j\left(e_i\cdot e_j-e_j\cdot e_i\right)$ satisfies
		\begin{eqnarray*}
			[\underline{u},e_k]&=&\frac{1}{2}\varepsilon_i\varepsilon_j(e_i\cdot e_j\cdot e_k-e_k\cdot e_i\cdot e_j),
		\end{eqnarray*}
		which yields for $k=i$
		$$[\underline{u},e_i]=\frac{1}{2}\varepsilon_i\varepsilon_j(\varepsilon_i e_j+\varepsilon_i e_j)=\varepsilon_j e_j=u(e_i);$$
		similarly we can prove that $[\underline{u},e_j]=u(e_j)$ and also readily see that $[\underline{u},e_k]=0$ for $k\neq i, j$. Equality \eqref{representacion_u} is a consequence of linearity.
	\end{proof}
	
	\textbf{Acknowledgments}. The author was partially supported by the project PAPIIT IA106218. She thanks P. Bayard for valuable suggestions during the development of this work and J. Roth for useful conversations, especially about the $\mathbb{L}(\kappa,\tau)$ spaces.

	\begin{center}
		E-mail address: bzavala@ciencias.unam.mx
		\\
		\small{FACULTAD DE CIENCIAS, UNIVERSIDAD NACIONAL AUT\'ONOMA DE M\'EXICO, AV. UNIVERSIDAD 3000, CIRCUITO EXTERIOR S/N, DELEGACI\'ON COYOAC\'AN, C.P. 04510, CIUDAD UNIVERSITARIA, CDMX, M\'EXICO.}
	\end{center}
	
\end{document}